\definecolor{marin}{rgb}   {0.,   0.1,   0.5} 
\definecolor{rouge}{rgb}   {0.8,   0.,   0.} 
\definecolor{sepia}{rgb}   {0.4,   0.25,   0.} 
\definecolor{mag}{rgb}   {0.3,   0,   0.3} 
\newcommand{\R}{\ensuremath{\mathbb{R}}}
\newcommand{\abs}[1]{\ensuremath{\left|#1\right|}}
\newtheorem{theorem}{Theorem}[section]
\newtheorem{corollary}[theorem]{Corollary}
\newtheorem{lemma}[theorem]{Lemma}
\newtheorem{proposition}[theorem]{Proposition}
\newtheorem{definition}[theorem]{Definition}
\newtheorem{remark}[theorem]{Remark}
\begin{document}

\title[Almost conservation of the harmonic actions at low regularity
]{Almost conservation of the harmonic actions for fully discretized nonlinear Klein--Gordon equations at low regularity}

\author{Charbella Abou Khalil and Joackim Bernier}

\email{Charbella.AbouKhalil@univ-nantes.fr}

\address{\small{Nantes Universit\'e, CNRS, Laboratoire de Math\'ematiques Jean Leray, LMJL,
F-44000 Nantes, France
}}

\email{joackim.bernier@univ-nantes.fr}

\keywords{Splitting methods, backward error analysis, Birkhoff normal form, low regularity}

\subjclass[2010]{65P10, 65P40, 37K55, 35B34}

\begin{abstract}
Close to the origin, the nonlinear Klein--Gordon equations on the circle are nearly integrable Hamiltonian systems which have infinitely many almost conserved quantities called harmonic actions or super-actions. We prove that, at low regularity and with a CFL number of size $1$, this property is preserved if we discretize the nonlinear Klein--Gordon equations
with the symplectic mollified impulse methods. This extends previous results of D. Cohen, E. Hairer and C. Lubich to non-smooth solutions.
\end{abstract}
\maketitle

\setcounter{tocdepth}{1} 
\tableofcontents

\section{Introduction}

\subsection{Context and motivation} We consider the \emph{nonlinear Klein--Gordon} equations on the torus $\mathbb{T}:=\mathbb{R}\slash 2\pi \mathbb{Z}$ (i.e. on the segment $[0,2\pi]$ with periodic boundary conditions)
\begin{align}\label{N1}\tag{KG}
  \partial_{t}^2q -\partial_{x}^2q+\rho q+g(q)=0
    \end{align}
where $q=q(x,t)\in \R$ with $(x,t)\in \mathbb{T}\times \R$, $\rho >0$ is a parameter called \emph{mass} and $g\in C^{\infty}(\mathbb{R};\mathbb{R})$ is a smooth real function having a zero of order $2$ at $0,$ i.e. satisfying $g(0)=g'(0)=0$. 

\medskip

They are \emph{Hamiltonian partial differential equations}. Indeed, setting, as usual, 
$$
p := \partial_t q,
$$
the nonlinear Klein--Gordon equation \eqref{N1} rewrites
\[\partial_t 
\begin{pmatrix}
q\\
p
\end{pmatrix} = \begin{pmatrix}
0 & 1 \\
\partial_x^2 - \rho & 0
\end{pmatrix}\begin{pmatrix}
q\\
p
\end{pmatrix} - \begin{pmatrix}
0\\
g(q)
\end{pmatrix}=\begin{pmatrix}
0 & 1 \\
-1 & 0
\end{pmatrix} \nabla H(q,p) \] with 
$$
H(q,p):= \int_{\mathbb{T}}\Big(\frac{1}{2} (p^2+(\partial_x q)^2 + \rho q^2 ) + G(q)\Big) \,\frac{\mathrm{d}x}{2\pi} 
$$ 
where $G \in C^\infty(\mathbb{R};\mathbb{R})$ is the primitive of $g$ vanishing at the origin and $\nabla=(\nabla_q,\nabla_p)$ denotes the standard gradient for the  $L^2$ scalar product. Note that the Hamiltonian $H$ is a constant of motion of \eqref{N1}.

\medskip

We are interested in the dynamics of the \emph{small solutions} of \eqref{N1}. In this regime, \eqref{N1} is a nearly integrable Hamiltonian system. Indeed, since $g(0) = g'(0)$, in the Fourier variables 
$$
q_k := \int_{\mathbb{T}} q(x) e^{-ikx} \frac{\mathrm{d}x}{2\pi}, \quad p_k := \int_{\mathbb{T}} p(x) e^{-ikx} \frac{\mathrm{d}x}{2\pi}, \quad \mathrm{for} \ k\in \mathbb{Z,}
$$
for small solutions, \eqref{N1} is a perturbation of the linear integrable system
\begin{equation}
\label{eq:kglin}
\partial_t 
\begin{pmatrix}
q_k\\
p_k
\end{pmatrix} = \begin{pmatrix}
0 & 1 \\
-\omega_k^2 & 0
\end{pmatrix}\begin{pmatrix}
q_k\\
p_k
\end{pmatrix}, \quad k\in \mathbb{Z}
\end{equation}
of frequencies
$$
\omega_k := \sqrt{k^2 +\rho}.
$$
This linear equation \eqref{eq:kglin} has infinitely many constants of motion. Among all of them, we expect, from perturbation theory in the finite dimensional setting (and in particular Birkhoff normal forms; see \cite{MR2840298}), that, for typical values of the mass $\rho$, the \emph{harmonic actions} (also called \emph{super-actions})
$$
J_k(p,q) :=  \omega_k^{-1} |p_k|^2 + \omega_k |q_k|^2 
$$
are almost preserved, for very long times, by the flow of \eqref{N1}.

\medskip

 More precisely, we consider solutions of \eqref{N1} generated by initial data
 $$
 (q(0),p(0)) = (q^{(0)},p^{(0)})
 $$ 
 of size $\varepsilon \ll 1$ in a Sobolev space $H^{s+1/2}(\mathbb{T};\mathbb{R})\times H^{s-1/2}(\mathbb{T};\mathbb{R})$ for some $s>0$, i.e. 
$$
\varepsilon = \Vert (q^{(0)},p^{(0)}) \Vert_{H^{s+1/2} \times H^{s-1/2}} \ll 1.
$$
The Sobolev spaces $H^s$ are defined by
$$
H^s(\mathbb{T};\mathbb{K}) :=\{  u\in L^2(\mathbb{T};\mathbb{K}) \ | \ \| u\|_{H^s}^2 = \sum_{k\in \mathbb{Z}} \langle k \rangle^{2s}|u_k|^2<\infty \}, \quad \mathbb{K}\in \{ \mathbb{R},\mathbb{C}\}
$$
where the numbers $u_k := (2\pi)^{-1}\int_{\mathbb{T}} u(x) e^{-ikx} \mathrm{d}x$ denote the Fourier coefficients of $u$ and $\langle \cdot \rangle := \sqrt{1+|\cdot|^2}$ denotes the Japanese bracket.

\medskip

In all the paper, given two real numbers $a,b\in \mathbb{R}$, the notation $a \lesssim_p b$ means that there exists a constant $C>0$ depending only on $p$ (and possibly other constants clearly considered as fixed) such that $a \leq C b$.  We write $a\sim_p b$ if $a\lesssim_p b$ and $a \lesssim_p b$.

\medskip

The nonlinear Klein--Gordon equation \eqref{N1} is locally well-posed in $H^{s+1/2}(\mathbb{T};\mathbb{R})\times H^{s-1/2}(\mathbb{T};\mathbb{R})$ for any $s>0$. As a consequence, by a simple homogeneity argument, it can be proven that before times of order $\varepsilon^{-1}$, the nonlinear effects are negligible with respect to the linear ones\footnote{i.e. the solution of the nonlinear equation remains close to the solution of the linear one}, and so that the harmonic actions are almost preserved, i.e.
$$
t \ll \varepsilon^{-1} \implies \sum_{k\in \mathbb{Z}} \langle k\rangle^{2s+1} \big| J_k(p(t),q(t)) -J_k(p(0),q(0)) \big| \ll \varepsilon^2 \sim \sum_{k\in \mathbb{Z}} \langle k\rangle^{2s} J_k(p(0),q(0)).
$$ 
In this setting, the problem is then to prove the almost preservation of the harmonic actions for non trivial times, i.e. for times much larger than $\varepsilon^{-1}$. In high regularity, this problem has been solved by Bambusi \cite{Bam03}, Bambusi--Gr\'ebert \cite{BG06} and Cohen--Hairer--Lubich \cite{MR2366141}. They proved, for typical values of the mass $\rho$, the almost preservation of the harmonic actions for times of order $\varepsilon^{-r}$, for any $r$ arbitrarily large,  provided that the initial datum is of size $\varepsilon \ll 1$ in $H^{s+1/2}\times H^{s-1/2}$ with $s$ large enough with respect to $r$. More precisely, they proved that for all $r\geq 1$, there exists $s_0(r) \geq 1$ such that for almost all $\rho>0$, provided that $s\geq s_0(r)$ and $\varepsilon = \Vert (q^{(0)},p^{(0)}) \Vert_{H^{s+1/2} \times H^{s-1/2}} \lesssim_{r,s,\rho} 1 $, the associated solution to \eqref{N1} satisfies
\begin{equation}
\label{eq:classicul}
t \leq \varepsilon^{-r} \implies \sum_{k\in \mathbb{Z}} \langle k\rangle^{2s+1} \big| J_k(p(t),q(t)) -J_k(p(0),q(0)) \big| \lesssim_{r,s,\rho} \varepsilon^3.
\end{equation}
As discussed in \cite{MR2366141}, in these proofs, $s_0(r)$ is very large even for quite small values of $r$. Also in these proofs, $s_0(r)$ goes to $+\infty$ as $r$ goes to $+\infty$ (at least like $r^2$). The smoothness assumption $s\geq s_0(r)$, where $s_0(r)$ goes to $+\infty$ as $r$ goes to $+\infty$, is crucial in the proofs of this result in order to deal with the singularities generated by the small divisors. 

\medskip

In a series of papers \cite{MR2366141,MR2413147,zbMATH05323311}, D. Cohen, E. Hairer and C. Lubich proved that this almost conservation property survives to some full discretizations given by some standard symplectic integrators with CFL number of order $1$ (described just below). Moreover, they included the results of some numerical simulations suggesting that, surprisingly, even in low regularity, the harmonic actions are almost preserved (or, in other words, that $s_0(r)$ should be small and should not depend on $r$). Motivated by these numerical observations, the second author and B. Gr\'ebert, introduced, in \cite{bernier2021birkhoff}, new small divisors estimates (recalled in Section \ref{z1}) and a new partial Birkhoff normal form to prove the almost preservation of the \emph{low} harmonic actions at low regularity (in the energy space). More precisely, they proved that for all $r\geq 1$, there exists $\beta_r \geq 1$ such that for almost all $\rho>0$, provided that $s=1/2$ and $\varepsilon = \Vert (q^{(0)},p^{(0)}) \Vert_{H^{1} \times L^2} \lesssim_{r,\rho} 1 $, the associated solution to \eqref{N1} satisfies
\begin{equation}
\label{eq:low_reg_poly}
t \leq \varepsilon^{-r} \implies \forall k\in \mathbb{Z}, \ \big| J_k(p(t),q(t)) -J_k(p(0),q(0)) \big| \lesssim_{r,\rho} \langle k \rangle^{\beta_r} \varepsilon^3.
\end{equation}
The smoothness assumption $s\geq s_0(r)$ is removed but, due to the growing factor $ \langle k \rangle^{\beta_r}$, we only control the variation of the low harmonic actions.

\medskip

The main result of this paper consists in proving that this almost preservation property at low regularity survives to the full discretizations considered by Cohen, Hairer and Lubich in \cite{zbMATH05323311}. However, contrary to \cite{zbMATH05323311}, our proof does not rely on modulated Fourier expansions but on backward error analysis and Birkhoff normal form (we refer to subsection \ref{sub:proof} for discussions about the proof). 

\subsection{Discretization} \label{sub:disc} We consider the same discretization as D. Cohen, E. Hairer and C. Lubich in \cite{zbMATH05323311}. However, to ensure that this paper is self-contained, we recall it.

\medskip

\noindent \emph{\underline{Semi-discretization.}} With respect to the space variable, we consider the standard pseudo-spectral discretization (with aliasing). The unknowns $q(t),p(t) : \mathbb{T}_K \to \mathbb{R}$ are real-valued functions on the discretized torus, $\mathbb{T}_K$, with $K$ equidistant points
$$
\mathbb{T}_K := \frac{2\pi}{K} (\mathbb{Z}/K\mathbb{Z}).
$$
We define the discrete Fourier coefficients of any complex-valued function $u : \mathbb{T}_K \to \mathbb{C}$ on the discrete torus by
$$
u_k = \frac1{K} \sum_{x\in \mathbb{T}_K} u(x) e^{-i k x}, \quad k\in \mathcal{N}_K :=  [-K/2,K/2)\cap \mathbb{Z}.
$$
Note that the inverse Fourier transform formula ensures that for all $x\in \mathbb{T}_K$
\begin{equation}
\label{eq:fourier_inverse}
u(x) = \sum_{k\in \mathcal{N}_K} u_k e^{i kx}.
\end{equation}
As a consequence, we identify $\mathbb{C}^{\mathcal{N}_K}$ with the space of the complex-valued functions on $\mathbb{T}_K$. Moreover, the formula \eqref{eq:fourier_inverse} also allows to identify the functions on the discrete torus $\mathbb{T}_K$ with trigonometric polynomials on $\mathbb{T}$ whose Fourier coefficients are in $\mathcal{N}_K$. This identification provides a definition of $\partial_x$ as an operator acting on $\mathbb{C}^{\mathcal{N}_K}$ by the formula
$$
\forall u \in\mathbb{C}^{\mathcal{N}_K},\forall k\in \mathcal{N}_K, \ (\partial_x u)_k := ik u_k. 
$$
We recall that the discrete Fourier transform and the pseudo spectral interpolation are isometries in the sense that\footnote{these formula are also true for the Hermitian scalar product (i.e. without the real part), but only the real scalar product is useful in this paper.} for all complex-valued functions  $u,v \in \mathbb{C}^{\mathcal{N}_K}$ on the discrete torus $\mathbb{T}_K$,
\begin{equation}
\label{eq:isom}
(u,v)_{L^2} := \Re \sum_{k\in \mathcal{N}_K} u_k \overline{v_k} =\frac1K \Re \sum_{x\in \mathbb{T}_K} u(x) \overline{v(x)} = \Re \int_{\mathbb{T}} u(x) \overline{v(x)} \frac{\mathrm{d}x}{2\pi} . 
\end{equation}
As a consequence, the semi-discretized nonlinear Klein--Gordon equation can be written in similar form as the continuous one, i.e.
\begin{equation}
\label{eq:KG_sd}
\tag{$\mathrm{KG}_{\mathrm{sd}}$}
\partial_t 
\begin{pmatrix}
q\\
p
\end{pmatrix} = \begin{pmatrix}
0 & 1 \\
\partial_x^2 - \rho & 0
\end{pmatrix}\begin{pmatrix}
q\\
p
\end{pmatrix} - \begin{pmatrix}
0\\
g(q)
\end{pmatrix}.
\end{equation}
The only difference is that now $p,q : \mathbb{T}_K \to \mathbb{R}$ are real-valued functions on the discrete torus $\mathbb{T}_K$. Moreover, the semi-discretized Klein--Gordon equation \eqref{eq:KG_sd} is also a Hamiltonian system. Indeed, it rewrites
$$
\partial_t 
\begin{pmatrix}
q\\
p
\end{pmatrix} = \begin{pmatrix}
0 & 1 \\
-1 & 0
\end{pmatrix} \nabla H^{K}(q,p)
$$
 where $\nabla = (\nabla_q,\nabla_p)$, $\nabla_q,\nabla_p$ are the partial gradients for the $L^2$ scalar product and
$$
H^K(q,p):= T^K(q,p) + W^K(q)
$$ 
with 
\begin{equation}
\label{eq:def_TK} 
 T^K(q,p)  :=  \int_{\mathbb{T}}\frac{1}{2}\left(p^2+(\partial_x q)^2 + \rho q^2 \right) \frac{\mathrm{d}x}{2\pi}   =\frac12 \sum_{k\in \mathcal{N}_K} \omega_k J_k(q,p)  .
\end{equation}
and
\begin{equation}
\label{eq:def_WK}
W^K(q):= \frac1K \sum_{x\in \mathbb{T}_K} G(q(x)).
\end{equation}
 For readability of notations, from now, we drop the discretization exponent $K$ throughout the article. Nevertheless, we will always pay attention to establish uniform estimates with respect to $K$ (i.e. the implicit constants in the notations $\lesssim$ do not depend on $K$). 
 
\medskip 
 
\noindent  \emph{\underline{Full discretization.}} For the time discretization, as in  \cite{zbMATH05323311}, we consider a \emph{symplectic mollified impulse method}. These methods have been introduced in \cite{GSS98}. They are Strang splitting methods with mollifiers. To present the method, we have to introduce the Fourier multiplier $\Lambda =\sqrt{-\partial_x^2+\rho}$, naturally defined by
$$
\forall u \in \mathbb{C}^{\mathcal{N}_K},\forall k\in \mathcal{N}_K,  \ (\Lambda u)_k := \omega_k \, u_k.
$$
In this paper, we consider the following full discretization of the Klein--Gordon equation
\begin{equation}
\label{eq:KG_fd}
\tag{$\mathrm{KG}_{\mathrm{fd}}$}
\left\{ \begin{array}{lll}
 q^{n+1} = \cos(h\Lambda)q^{n} + h\, \text{sinc}(h\Lambda)p^{n} -\frac{h^2}{2}\text{sinc}(h\Lambda)\phi(h\Lambda)g(\phi(h\Lambda)q^{n} )  \vspace{0.3cm} \\
 p^{n+1} = -\Lambda\sin(h\Lambda)q^{n} + \cos(h\Lambda)p^{n}  -\frac{h}{2}  \phi(h\Lambda)\left( \cos(h\Lambda) g(\phi(h\Lambda)q^{n} )+ g(\phi(h\Lambda)q^{n+1} ) \right)
\end{array}\right.
\end{equation}
where $h>0$ is the \emph{time step}, $q^n,p^n : \mathbb{T}_K \to \mathbb{R}$ are real-valued functions on the discrete torus $\mathbb{T}_K$, $n\in \mathbb{N}$ is an integer such that $(q^n,p^n)$ is designed to be an approximation of $(q(hn),p(hn))$, $\text{sinc}(X) =\sin(X)/X$ is the usual cardinal sine function, and $\phi \in C^\infty(\mathbb{R}_+;\mathbb{R})$ is a smooth bounded  real-valued function satisfying $\phi(0) = 1$.  We consider the function $\phi$ as given once and for all, we do not try to track the dependencies of the parameters with respect to $\phi$.

\medskip

This method also rewrites as the following $2$-steps method
$$
q^{n+1}-2 \cos(h \Lambda) q^n + q^{n-1} =- h^2 \phi(h\Lambda) \, \mathrm{sinc}( h\Lambda) \, g( \phi(h\Lambda)q^{n} ).
$$
It is also very useful, both for the understanding of the method and its analysis, to notice that it is a Strang splitting with a mollifier. Indeed, 
tedious but direct calculations show that \eqref{eq:KG_fd} rewrites
$$
(q^{n+1},p^{n+1}) = \Phi_{\mathrm{num}}^h(q^n,p^n)
$$
with the numerical flow $\Phi_{\mathrm{num}}^h$ given by the Strang splitting
$$
\Phi_{\mathrm{num}}^h := \Phi^{h/2}_{V}\circ \Phi^{h}_{T} \circ \Phi^{h/2}_{V},
$$
where $\Phi_{V}$,$\Phi_{T}$ denote respectively the flows of the Hamiltonian equations
\begin{equation}
\label{eq:flowVqp}
\partial_t \begin{pmatrix} q\\p \end{pmatrix} = \begin{pmatrix}
0 & 1 \\
-1 & 0
\end{pmatrix} \nabla T(q,p) \quad \mathrm{and} \quad \partial_t \begin{pmatrix} q\\p \end{pmatrix} = \begin{pmatrix}
0 & 1 \\
-1 & 0
\end{pmatrix} \nabla V(q,p) 
\end{equation}
and $V$ is a mollified version of $W$ defined by
\begin{equation}\label{mollified_perturbation}
V(q,p)  = W( \phi(h\Lambda) q).
\end{equation}
Note that thanks to this formulation as splitting method, it is clear that the numerical flow of \eqref{eq:KG_fd} is symplectic.

\subsection{Main result}

Now, we are in position to state the main result of this paper.
\begin{theorem} For \label{thm:main} all $r\geq 1$, there exists $\beta_r \geq 1$, such that for almost all $\rho >0$, all $\delta>0$, all $K\geq 1$ and all $h>0$ satisfying 
\begin{equation}
\label{eq:CFL}
(r+2) \, h\, \omega_{K/2} \leq 2\pi -\delta,
\end{equation}
 there exists $\varepsilon_0(r,\rho)>0$ depending only on $r$ and $\rho$ such that if $(q^n,p^n)_{n\geq 0}$ is a sequence of real-valued functions on $\mathbb{T}_K$ solution of the fully discretized nonlinear Klein--Gordon equation \eqref{eq:KG_fd} and whose initial datum is small enough in the energy space $H^1 \times L^2$, i.e.
 $$
 \varepsilon := \| q^0 \|_{H^1} + \| p^0 \|_{L^2} \leq \varepsilon_0(r,\rho)
 $$
 then the low harmonic actions are almost preserved for times of order $\varepsilon^{-r}$, i.e.
\begin{equation}
\label{eq:principal_truc}
nh \leq \varepsilon^{-r} \implies \forall k\in \mathcal{N}_K, \ |J_k(q^n,p^n)-J_k(q^0,p^0) | \lesssim_{\delta,r} \langle k \rangle^{\beta_r} \varepsilon^3.
\end{equation}
\end{theorem}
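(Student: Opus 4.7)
The plan is to combine a backward error analysis (BEA) adapted to the splitting structure with the low-regularity partial Birkhoff normal form of \cite{bernier2021birkhoff}, exactly in the spirit suggested at the end of subsection 1.1. The starting point is the decomposition $\Phi_{\mathrm{num}}^h=\Phi_V^{h/2}\circ\Phi_T^h\circ\Phi_V^{h/2}$, where $T$ is quadratic (so $\Phi_T^h$ is explicit and just rotates the pair $(q_k,p_k)$ by the angle $h\omega_k$) and $V$ is the mollified potential from \eqref{mollified_perturbation}, which for small $q$ is of order $q^2$ and smoothing. The CFL condition $(r+2)h\omega_{K/2}\leq 2\pi-\delta$ is precisely what is needed to keep the phases $h\omega_k$ uniformly bounded away from the numerical resonance $2\pi\mathbb{Z}$, so that the Baker--Campbell--Hausdorff series defining the BEA modified Hamiltonian makes sense term by term.

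First I would construct, for each fixed truncation order $N=N(r)$, a modified Hamiltonian $H_h=T+V+\sum_{j=1}^{N} h^j Z_j$ such that $\Phi_{\mathrm{num}}^h=\Phi_{H_h}^h+\mathcal{R}_N$, with a remainder $\mathcal{R}_N$ that is of arbitrarily high order in $\varepsilon$ on balls of radius $\varepsilon$ in $H^1\times L^2$. Each $Z_j$ is built from iterated Poisson brackets of $T$ and $V$, computed with the homological operator $\mathrm{ad}_T$; here the CFL condition provides the quantitative non-resonance estimate $|1-e^{ih(\omega_{k_1}\pm\cdots\pm\omega_{k_m})}|\gtrsim_{\delta}|1-e^{ih(\omega_{k_1}\pm\cdots\pm\omega_{k_m})}|$ bounded below by the corresponding continuous quantity, so that each $Z_j$ remains a polynomial (on the trigonometric side) with the same type of multilinear estimates as the classical nonlinearity. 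The mollifier $\phi(h\Lambda)$ plays a crucial role here since it makes $V$ regularizing in $q$, absorbing the loss of derivatives normally produced by splitting methods and keeping the whole construction in the energy space.

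Next I would feed $H_h$ into the partial Birkhoff normal form machinery of \cite{bernier2021birkhoff} (whose small-divisor estimates will be recalled in Section \ref{z1}). The goal is to find a symplectic change of variables $\tau$, close to the identity on $H^1\times L^2$, such that $H_h\circ\tau=T+Z+\mathcal{Q}$, where $Z$ Poisson-commutes with every $J_k$ (so its flow exactly preserves the actions) and $\mathcal{Q}$ vanishes at order $\varepsilon^{r+3}$ on low modes while being controlled by a factor $\langle k\rangle^{\beta_r}$ on the $k$-th action. The Mel\-nikov-type arithmetic condition on $\rho$ used there is the usual non-resonance for $\omega_k$, and one must check that incorporating the discretization parameters $h,K$ does not destroy it; the CFL hypothesis combined with the fact that the normal form only involves a finite, $r$-dependent set of indices makes this a genuine but essentially cosmetic adaptation.

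Finally, one closes the argument by a standard bootstrap. Writing the variation of $J_k$ over one step as $J_k(\Phi_{\mathrm{num}}^h z)-J_k(z)=h\{Z+\mathcal{Q},J_k\}(z)+O(h\cdot\|\mathcal{R}_N\|)$ and using $\{Z,J_k\}=0$, one obtains $|J_k(q^{n+1},p^{n+1})-J_k(q^n,p^n)|\lesssim_{\delta,r} h\,\langle k\rangle^{\beta_r}\varepsilon^{r+3}$ as long as the trajectory remains in a ball of size $2\varepsilon$ in $H^1\times L^2$. Summing over $n\leq\varepsilon^{-r}/h$ steps yields the claimed bound \eqref{eq:principal_truc}, while the preservation of the ball follows from \eqref{eq:principal_truc} itself applied to $s=1/2$, giving the usual continuity argument. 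The main obstacle will be step two and three jointly: one must perform both the formal BEA and the Birkhoff conjugation while keeping uniform-in-$K$ multilinear estimates in the energy space, and in particular verify that the discrete version of the small-divisor bounds of \cite{bernier2021birkhoff}, where $\omega_k$ is replaced by the numerical frequency $h^{-1}\arccos(\cos(h\omega_k))$ on the torus $\mathbb{R}/2\pi\mathbb{Z}$, remains valid for a full-measure set of $\rho$ uniformly in $h,K$ satisfying the CFL.
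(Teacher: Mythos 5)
Your overall strategy (backward error analysis, then a low-regularity partial Birkhoff normal form, then a dynamical argument) matches the paper's, and you correctly identify the role of the mollifier and the CFL condition. However, there are two genuine gaps that would prevent your argument from closing.

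First, your final bootstrap step is the one that fails precisely at low regularity, and the paper is explicit about this. You propose to ``preserve the ball'' in $H^1\times L^2$ by applying the actions estimate at $s=1/2$ and running a continuity argument. But the estimate $|J_k(q^n,p^n)-J_k(q^0,p^0)|\lesssim\langle k\rangle^{\beta_r}\varepsilon^3$ has a \emph{loss} of $\beta_r$ derivatives: summing $\langle k\rangle\cdot\langle k\rangle^{\beta_r}\varepsilon^3$ over $k\in\mathcal N_K$ blows up like $K^{2+\beta_r}$, so it cannot reproduce a uniform-in-$K$ bound on the $H^1\times L^2$ norm. (This is the exact obstruction discussed under ``Comments about the regularity condition'' in the introduction.) The paper closes this loop differently: it proves that the \emph{modified Hamiltonian} $H_h$ is an almost conserved quantity of the numerical flow (Proposition \ref{conservation}), shows $H_h$ is coercive on $H^{1/2}$ in the complex variables, i.e.\ comparable to $\|u\|_{H^{1/2}}^2\sim\|q,p\|^2_{H^1\times L^2}$ (Lemma \ref{ellipticity}), and deduces the a priori bound $\|u^n\|_{H^{1/2}}\lesssim\varepsilon$ on the time scale $\varepsilon^{-r}$ (Corollary \ref{cor:reste_petit}). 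Without this energy-based a priori bound your argument does not close.

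Second, you anticipate having to re-prove the small-divisor estimates of \cite{bernier2021birkhoff} with $\omega_k$ replaced by a numerical frequency $h^{-1}\arccos(\cos(h\omega_k))$. This is unnecessary, and it is precisely the feature that makes the analysis tractable: the splitting solves the linear part exactly, so $\mathrm d\Phi^h_{\mathrm{num}}(0)=\mathrm d\Phi^h_T(0)$, the modified Hamiltonian $H_h=h^{-1}B(h)$ has \emph{exactly} $T$ as its quadratic part ($B_0=hT$, and $B_1,\dots,B_r$ all have degree $\geq 3$), and the pseudo-spectral discretization keeps the semi-discrete frequencies a subset of the continuous ones. The Birkhoff normal form is therefore homologous against $\mathrm{ad}_T$ with the original Klein--Gordon frequencies, and Proposition \ref{prop:non_res} is applied verbatim — no $h$- or $K$-dependent full-measure set is needed. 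Relatedly, the normal form you describe, where $Z$ commutes with \emph{every} $J_k$, is too strong to be achievable uniformly in $k$: the paper's Theorem \ref{N102} is applied with a $k$-dependent threshold $\gamma=\kappa\langle k\rangle^{-\alpha_r}$, so that the $\gamma$-resonant part commutes only with the particular $J_k$ under consideration (Lemma \ref{lem:si_ca_res_ca_com}); the factor $\langle k\rangle^{\beta_r}$ is exactly the price of this $k$-by-$k$ construction.

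A minor remark: your BEA organizes the modified Hamiltonian by powers of $h$, while the paper (because $V$ is not homogeneous) organizes the cohomological system by degree of homogeneity in $u$, with coefficients $B_n(t)$ that are nontrivial polynomials in $t$; the target estimate is then $h\,\varepsilon^{r+2}$, uniformly in $h,K$. This is more than a cosmetic reshuffling and is worth matching before attempting the estimates.
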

This result partially explains the observations of D. Cohen, E. Hairer and C. Lubich in \cite{MR2366141,MR2413147,zbMATH05323311} and the one presented in subsection \ref{sub:exp} below. Indeed, the smoothness assumption has been removed, the initial data only have to be small in $H^1 \times L^2$ which is a regularity lower than the one they consider in their simulations. Theorem \ref{thm:main} only ensures the almost global preservation of the low harmonic actions. Nevertheless, in their simulations, they observe that the high harmonic actions seem also almost preserved.  This last phenomena is not explained by Theorem \ref{thm:main}.

\medskip

\noindent \emph{\underline{Comments about the CFL condition \eqref{eq:CFL} .}} The CFL condition \eqref{eq:CFL}  imposes a constraint on the numerical parameters $h,K$ which is uniform with respect to the size $\varepsilon$ of the initial datum. This allows to approximate solutions of  \eqref{N1} with arbitrary precision, i.e. the condition \eqref{eq:CFL} allows $h$ to go to $0$ and $K$ to $+\infty$ (contrary, for example, to the conditions introduced in \cite{MR2570074}).Note that in particular, the estimate on the right hand side of \eqref{eq:principal_truc} is uniform with respect to the numerical parameters $K$ and $h$.  Recalling that $\omega_k \sim \langle k\rangle$, we note that this is a CFL of  transport type (i.e. linear in $K$ and $h$) and so that it is not too costly. It allows to deal with the resonances generated by the time discretization of the equation. More precisely, it provides a uniform upper bound on non trivial quantities of the form
\begin{equation}
\label{eq:relou}
\big(1-e^{ih (\pm \omega_{k_1} \pm \cdots \pm \omega_{k_m})} \big)^{-1}
\end{equation}
with $m\leq r+2$ and $k_1,\cdots,k_m \in \mathcal{N}_K$.  These quantities are related to the problem of the resonant time steps and seems unavoidable in the analysis of the dynamics of discretized equations (see e.g. \cite{MR2895408,MR2840298}).

\medskip

The CFL condition \eqref{eq:CFL}  is not related to the low regularity issues. It also appears in the almost conservation of the Hamiltonian or the almost conservations of some harmonic actions in high regularity (see \cite{MR2811583,MR2895408,MR3712186}). As in \cite{MR3712186}, we could generalize it a little by imposing directly an upper bound on \eqref{eq:relou} by a negative small power of $\varepsilon$. When the CFL condition is violated, up to a genericity assumption on $h$, this is almost equivalent to imposing an upper bound on $K$ by a small negative power of $\varepsilon$. As explained in \cite{MR3712186}, up to such a generalization, the CFL condition \eqref{eq:CFL} covers the same kind of regimes as the non resonances conditions of Cohen--Hairer--Lubich in  \cite{zbMATH05323311} (but is simpler).

\medskip

\noindent \emph{\underline{Comments about the choice of the numerical method.}} Since we were motivated by the numerical observations in \cite{MR2366141,MR2413147,zbMATH05323311}, we chose to focus on the same numerical methods. With a few more notations and technicalities, the results could be extended to general splitting methods. Actually, for simplicity, we prove the result for the numerical flow associated with the Lie splitting
$$
\Phi_{\mathrm{num}}^h = \Phi^{h}_{V}\circ \Phi^{h}_{T},
$$
and then we use that the Strang splitting is conjugated to the Lie one. It seems to us that only two properties of the method are really crucial in our proof. The first one is that the numerical flow is symplectic. The second one, more technical, is that the modified frequencies (given by the backward error analysis) satisfy the strong non-resonance conditions introduced in \cite{bernier2021birkhoff} (and recalled in Proposition \ref{prop:non_res}).
Here this second property is ensured by the pseudo-spectral semi-discretization (which ensure that semi-discretized frequencies are a subset of the original ones) and the fact that the linear part of the semi-discretized equation is solved exactly by the numerical flow, i.e.
$$
\mathrm{d}\Phi_{\mathrm{num}}^h(0) = \mathrm{d}\Phi_{H}^h(0) = \exp\left( h \begin{pmatrix}
0 & 1 \\
\partial_x^2 - \rho & 0
\end{pmatrix} \right).
$$ 
We refer to \cite{demande_reviewer} for results concerning conservation properties of more general multi-steps methods and not necessarily small initial data.

The mollified impulse method is probably not the most accurate method to approximate solutions to \eqref{N1} in the energy space $H^1 \times L^2$. Indeed, classical numerical methods tend to suffer order reduction when applied to nonlinear dispersive equations at low regularity. A lot of progress have been made in recent years in analyzing these phenomena and developing new methods to avoid them (see e.g. \cite{MR4567995,WZ22,BS22,RS21,Ala23,OS18,ORS23,MS23}).
In any case, Theorem \ref{thm:main} allows to consider initial data smoother than $H^1\times L^2$ (e.g. in $H^2\times H^1$) and so to avoid order reduction issues.

\medskip

\noindent \emph{\underline{Comments about the choice of the equation.}} We focused on \eqref{N1} for simplicity to be in the same setting as Cohen--Hairer--Lubich in \cite{MR2366141,MR2413147,zbMATH05323311}. Nevertheless, the result and the proof are robust, and we expect that they could be applied to other models. At the continuous level (i.e. without any discretization), results similar to Theorem \ref{thm:main}, at low regularity, have also been proven for nonlinear Schr\"odinger equations in dimension $d\leq 2$ \cite{bernier2021birkhoff,bernier:hal-04090717}, for the nonlinear Klein--Gordon equation on the sphere in \cite{BGR21} and for the Gross-Pitaevskii equation on $\mathbb{R}$ in \cite{Charb}. The main limitation is that, for the moment, to prove that the non-resonance conditions are satisfied, we have to consider models for which the frequencies converge to the integer, i.e.
\begin{equation}
\label{eq:main_lim}
 \inf_{n\in \mathbb{Z}} |n-\omega_k| \mathop{\longrightarrow}_{k\to +\infty} 0.
\end{equation}
On the numerical analysis side, for discretized equations, almost preservation of the super-actions at high regularity and/or almost preservation of the Hamiltonian have also been proven for other semi-linear models (including nonlinear Schr\"odinger equations; see e.g. \cite{BFG13,MR2222808,GL10,MR3712186,MR2811583,MR2895408,MR2570074}).

\medskip

\noindent \emph{\underline{Comments about the regularity condition.}} In order to control the variation of the super-actions for very long times, we have to be able to prove that the norm of the solution remains of order $\varepsilon$ (or at least $\varepsilon^{\alpha}$ with $\alpha >1/2$) for very long times. In high regularity (i.e. $s\gg 1$), this is usually done by a bootstrap argument because the estimate \eqref{eq:classicul} on the variation of the super-actions implies the almost preservation of the $H^{s+1/2}\times H^{s-1/2}$ norm of the solution, i.e. $\|  (q(t),p(t)) \|_{H^{s+1/2}\times H^{s-1/2}} $ remains of size $\varepsilon$ for times of order $\varepsilon^{-r}$ (see e.g. \cite{BG06,MR2366141}).

Unfortunately, in low regularity, we only have a very weak control of the high super-actions (due to the factor $\beta_r$ in \eqref{eq:low_reg_poly}). The estimate \eqref{eq:low_reg_poly}, only provides a control of a Sobolev norm of index much smaller than the one of the norm in which we control the size of the initial data. In other words, it is an estimate with a loss of derivatives. It is too weak for a bootstrap. As a consequence, we need an \emph{a priori estimate} on the norm of the solution. That is why we consider small initial data in the energy space $H^1\times L^2$ in order to exploit the preservation of the Hamiltonian to ensure that the solution remains small in this space. 

We could also use other conserved quantities (like the Gibbs measure, or the mass for nonlinear Schr\"odinger equations; see \cite{bernier:hal-04090717}) to get a priori estimates. Unfortunately, in general, there exists  very few conserved quantities and they only control low regularity Sobolev norms (which, as explained in \cite{BGR21,bernier:hal-04090717}, generates many technical obstructions).

\subsection{Discussions about the proof}

\label{sub:proof}

The proof of Theorem \ref{thm:main} is divided into three main steps. First, in Section \ref{sec:back}, we perform the \emph{backward error analysis} of the method. More precisely, inspired by \cite{MR2895408,MR2811583,BFG13}, we construct, for any $r\geq 1$ arbitrary large but given, a modified Hamiltonian $H_h$ associated to a modified semi-discretized equation whose flow at time $h$ is an approximation of order $\varepsilon^{r+2}$ of the numerical flow (see Theorem \ref{main}). It is at this step that, to avoid resonant time steps, we impose the CFL condition \eqref{eq:CFL}. At this step, the regularity plays no role, we consider initial data of size $\varepsilon$ in $H^{s+1/2}\times H^{s-1/2}$ with $s\geq 1/2$ (but $s>0$ suffices).

As a corollary, we deduce the almost preservation of the modified Hamiltonian for times of order $\varepsilon^{-r}$ (see Proposition \ref{conservation} and Corollary \ref{cor:reste_petit}) and so that the numerical solution $(q^n,p^n)$ remains of size $\varepsilon$ in $H^1\times L^2$ on this time scale (see Corollary \ref{cor:reste_petit}).

Then, in Section \ref{z3}, we present an abstract theorem allowing to put the Hamiltonian $H_h$ in (partial) Birkhoff normal form. In other words,  we perform a canonical change of variable, close to the identity, to remove all the terms of $H_h$ which do not commute (or almost do not commute) with the linear part of the equation (given by $T$).

Finally, in Section \ref{z4}, we use the non-resonance conditions of \cite{bernier2021birkhoff} and the change of variable given by the Birkhoff normal form to construct some modified super-actions which are almost preserved by the flow generated by $H_h$ and so, thanks to the backward error analysis, by the numerical flow. 

It seems to us that this approach is robust and could be applied to other models and numerical methods. The main limitation comes from the small divisor estimates which impose that the eigenvalues of the linear part converge to the integers (see \eqref{eq:main_lim}).


The approach developed in this paper could also provide an alternative proof of the result in high regularity of \cite{zbMATH05323311} (only the dynamical consequence of the Birkhoff normal form theorem in Section \ref{z4} should be replaced by the classical one in high regularity (as in \cite{Bam03} or \cite{BG06} for example)).

\subsection{Numerical experiments} \label{sub:exp} To highlight the results mentioned in this paper, we now present some numerical experiments.

\medskip

For the experiments, for each regularity exponent $s_0 \in \{ 0.5,1\}$, we choose
$$
\forall y \in \mathbb{R}, \ g(y) = -y^5,\, K = 2048,  \, \rho = \sqrt{8}, \, \Phi = 1, h=10^{-3},\, \varepsilon = 0.75,
$$
$$
\forall x\in \mathbb{T}_K, \ q^0(x) = \varepsilon Z^{-1} \sum_{k\in \mathcal{N}_K} \langle k \rangle^{-s_0 - 0.525} e^{ikx} \quad and \quad p^0(x)=0.
$$
where 
$$
Z := \big(\sum_{k\in \mathcal{N}_K} \langle k \rangle^{ - 1.05  }\big)^{1/2}
$$
 is a normalization constant chosen such that 
 $$
 \|q^0\|_{H^{s_0}} = \varepsilon.
 $$
We note that this corresponds to the discretization of an initial datum $q(0)$ which is in $H^{s}$ for any $s<s_0+0.05$.
Then, for all $n\geq 1$, we compute $(q^n,p^n)$ using \eqref{eq:KG_fd}. 

\medskip

In Figure \ref{fig:1}, we plot the evolution of the super actions $J_k(q^n,p^n)$ for $hn\leq T=4000$, $s_0 \in \{0.5,1\}$ and $k \in \{0,\cdots,14\} \cup \{59,\cdots,89\}$. We observe that, in any case, the super actions are almost preserved. The larger $k$ and $s_0$ are, the lower the amplitude of the variation is.

\medskip

Sub-figure $(B)$ corresponds qualitatively\footnote{i.e. up to some normalization constants.} to what we proved in Theorem \ref{thm:main}: low super-actions are almost preserved in the energy space. We have no explanation for the almost conservation of the high super-actions (Sub-figures $(C)$ and $(D)$). If we knew it, we could adapt the proof of Theorem \ref{thm:main} to deduce the almost preservation of the low super-actions below the energy space (i.e. Sub-figure $(A)$).

\medskip

In the case $s_0= 0.5$,  since $K$ is finite, $ \|q^0\|_{H^{1}}$ is also finite of course. However, it is not small: $\|q^0\|_{H^{1}} \approx 7.5055$.
From what we observed, different values of parameters ($\rho,K,\varepsilon$...), of initial data ($p^0, q^0$)  and nonlinearity $g$ provide similar numerical results. We refer to \cite{zbMATH05323311,MR2366141,MR2895408} for further numerical experiments.

\begin{figure}[htbp]

\centering
    \begin{subfigure}[b]{0.5\textwidth}
        \centering
        \includegraphics[scale=0.45]{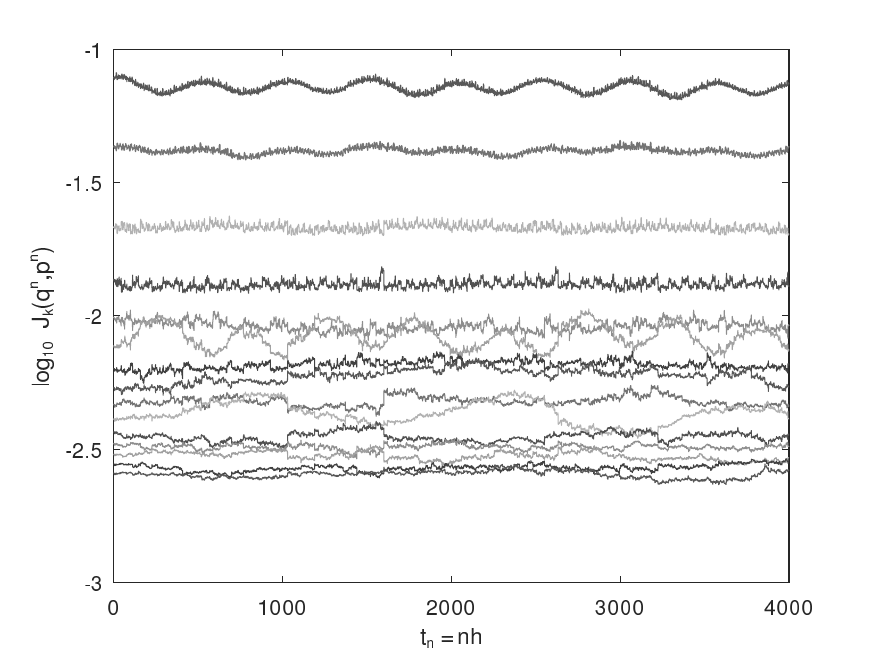}
        \caption{$s_0=0.5$, $k\in \{0,\cdots,14\}$}
        \label{sub:A}
    \end{subfigure}%
    \begin{subfigure}[b]{0.5\textwidth}
        \centering
        \includegraphics[scale=0.45]{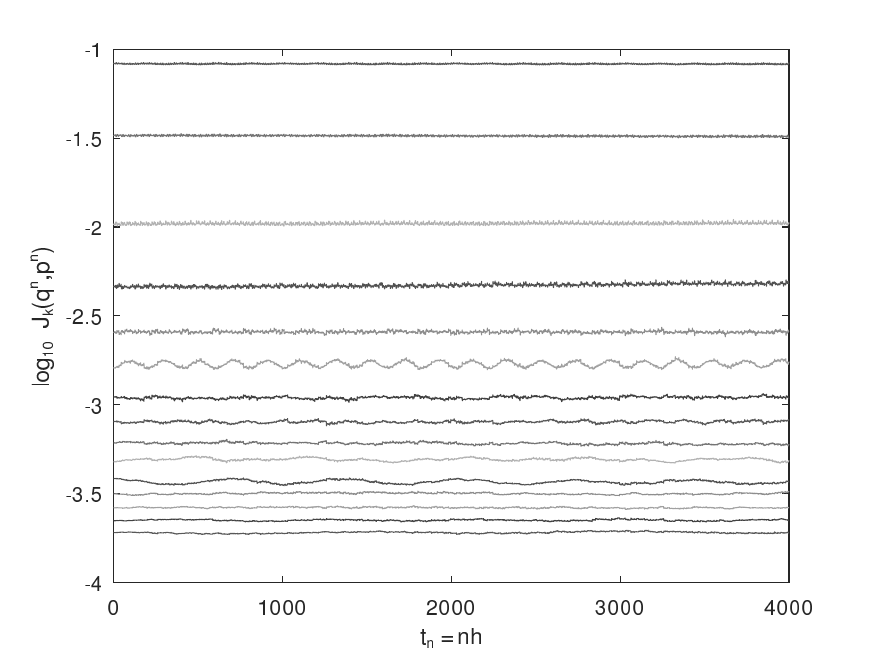}
        \caption{$s_0=1$, $k\in \{0,\cdots,14\}$}
        \label{sub:B}
    \end{subfigure}
    \begin{subfigure}[b]{0.5\textwidth}
        \centering
        \includegraphics[scale=0.45]{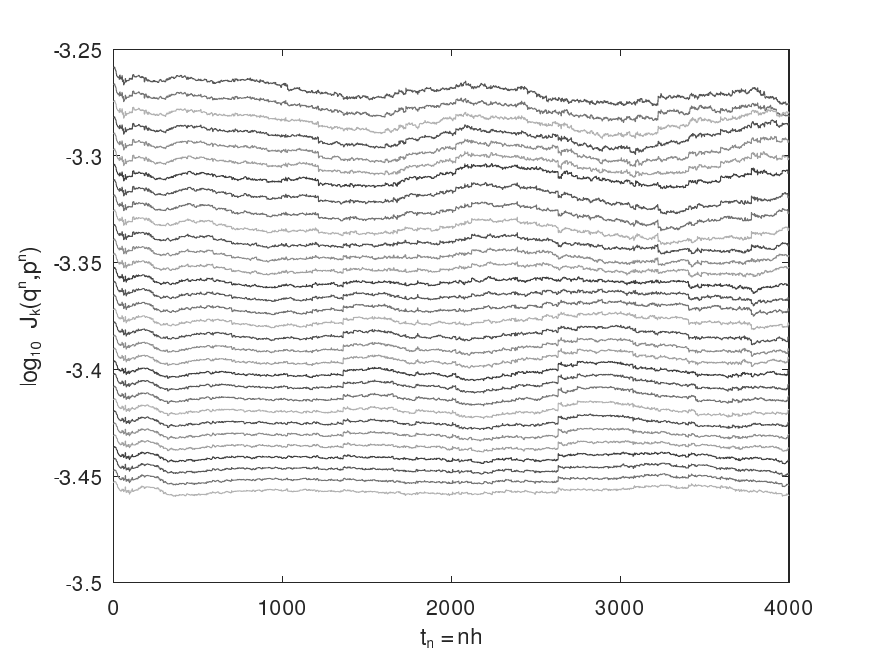}
        \caption{$s_0=0.5$, $k\in \{59,\cdots,89\}$}
    \end{subfigure}%
    \begin{subfigure}[b]{0.5\textwidth}
        \centering
        \includegraphics[scale=0.45]{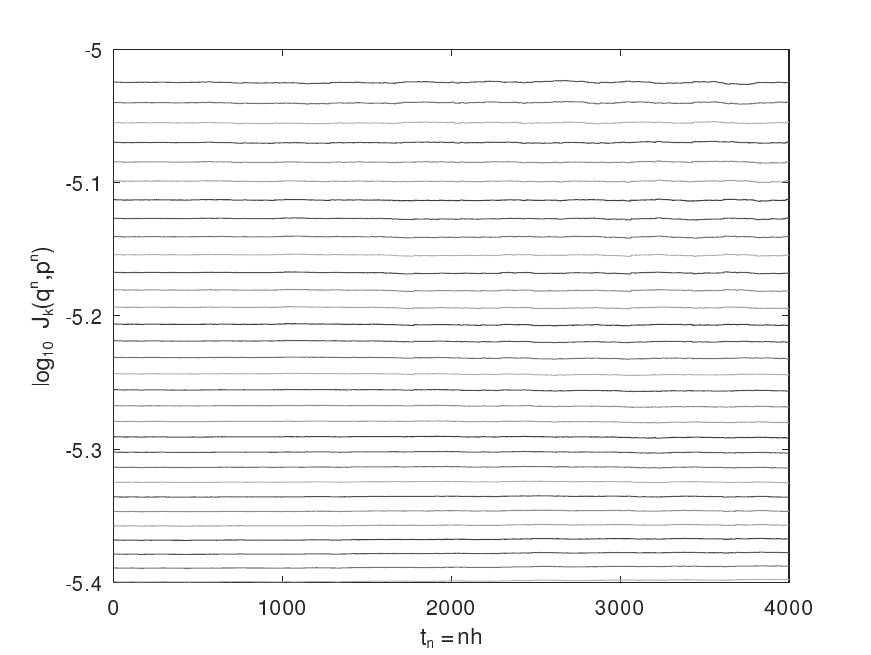}
        \caption{$s_0=1$, $k\in \{59,\cdots,89\}$}
    \end{subfigure}

\caption{Evolution of the logarithm of the super actions $\log_{10} J_k(q^n,p^n)$ for different regularity exponents $s_0$ and index ranges $k$ and for times $hn\leq T=4000$. }

\label{fig:1}
\end{figure}

\subsection*{Acknowledgments}  The authors thank E. Faou and B. Gr\'ebert for enthusiastic discussions about this work. During the preparation of this work the authors benefited from the support of the Centre Henri Lebesgue ANR-11-LABX-0020-0 and were partially supported by the ANR project KEN ANR-22-CE40-0016. J.B. was also supported by the region "Pays de la Loire" through the project "MasCan".

\section{Hamiltonian formalism}\label{z2}

In this section, we introduce a formalism well suited to deal with Hamiltonian PDEs and their discretizations. First, we introduce complex variables in order to diagonalize the linear part of \eqref{eq:KG_sd}. Then we introduce spaces of polynomial Hamiltonians which allow to perform the backward error analysis and the Birkhoff normal form. Finally, we will establish estimates on the vector fields and the flows generated by these polynomial Hamiltonians.  Since the pioneer work of Bambusi \cite{Bam03}, this kind of formalism is quite standard to perform Birkhoff normal form techniques to Hamiltonian PDEs. In the context of numerical analysis, similar formalism have, for example, also been used in \cite{MR2895408,MR2811583,MR2570074}. The main difference with respect to these papers is that, since we consider non-smooth solutions, we carefully keep track (as in \cite{bernier2021birkhoff,BGR21}) of the smoothing properties of the nonlinearity (encoded by the factors $\langle j_i \rangle^{1/2}$ in Definition \ref{ikea11} below).

\subsection{Complex variables and associated functional setting}\label{W1}

In the introduction, we have presented the problem and the result in the natural variables $(q,p)$. Nevertheless, it is much more convenient for the analysis to work in the complex variables $u\in \mathbb{C}^{\mathcal{N}_K}$. They are the variables diagonalizing the linear part of \eqref{eq:KG_sd} and are defined by
$$
u := \Lambda^{1/2}q + i \Lambda^{-1/2}p.
$$

In this subsection, we reformulate the equation and rewrite the main quantities with respect to these variables.

\noindent \underline{\emph{The harmonic actions.}} In these variables, the harmonic actions rewrite
\begin{equation}
\label{eq:les_supers_actions}
J_k(u) =  \frac{|u_k|^2 + |u_{-k}|^2}2 \quad \mathrm{if} \quad |k|<K/2
\end{equation}
and $J_{-K/2} = |u_{-K/2}|^2$ if $K$ is even. Indeed, if $|k|<K/2$, since $q,p$ are real-valued, we have
$$
p_k = \overline{p_{-k}},  \quad q_k = \overline{q_{-k}}
$$
and $p_{-K/2},q_{-K/2} \in \mathbb{R}$ if $K$ is even. It follows that if $|k|<K/2$, 
$$
\overline{u_{-k}} = \omega_k^{1/2} q_{k} - i  \omega_k^{-1/2} p_{k},
$$
and so
$$
q_k = \frac{u_k + \overline{u_{-k}}}{2 \omega_k^{1/2}} \quad \mathrm{and} \quad  p_k =\frac{u_k - \overline{u_{-k}}}{2 i\omega_k^{-1/2}}.
$$
As a consequence, recalling that by definition $J_k(p,q) :=  \omega_k^{-1} |p_k|^2 + \omega_k |q_k|^2$, we get \eqref{eq:les_supers_actions}  just by applying the parallelogram law.

\noindent \underline{\emph{The Sobolev norms.}} We note that
$$
\| q,p \|_{H^{s+1/2}\times H^{s-1/2}}^2 \sim_{\rho} \sum_{k\in \mathcal{N}_K} \langle k \rangle^{2s} J_k(q,p) \sim \| u\|_{H^s}^2.
$$
As a consequence, in Theorem \ref{thm:main}, we are going to consider solutions of size $\varepsilon$ in $H^{1/2}$.

\noindent \underline{\emph{The semi-discretized equation.}} The semi discretized nonlinear Klein--Gordon equation rewrites
$$
i\partial_t u = \Lambda u + \Lambda^{-1/2} g( \Lambda^{-1/2} \Re u) = \nabla H(u)
$$
with $H = T + W$ where the quadratic part of the Hamiltonian $T$ and its nonlinear part $W$ (defined respectively by \eqref{eq:def_TK} and \eqref{eq:def_WK}) rewrite
\begin{equation}
\label{eq:def_T_intro}
T(u) = \frac12 \sum_{k\in \mathcal{N}_K} \omega_k |u_k|^2 \quad \mathrm{and} \quad W(u)= \frac1K \sum_{x\in \mathbb{T}_K} G(\Re u(x)).
\end{equation}
The gradient $\nabla$ is the one associated with the real scalar product $L^2$ on $\mathbb{C}^{\mathcal{N}_K}$ defined in \eqref{eq:isom}. It is convenient to note that, as usual, for any smooth real-valued function $P$ on $\mathbb{C}^{\mathcal{N}_K}$, we have
$$
\nabla P(u) = 2\partial_{\overline{u_k}} P(u) \quad \mathrm{where} \quad \partial_{\overline{u_k}} := \partial_{\Re u_k} + i\partial_{\Im u_k}.
$$

\noindent \underline{\emph{The fully discretized equation.}} The fully discretized nonlinear Klein--Gordon equation rewrites naturally
\begin{equation}
\label{eq:def_Phi_num}
u^{n+1} = \Phi_{\mathrm{num}}^h(u^n)
\end{equation}
where, as previously, $\Phi_{\mathrm{num}}^h := \Phi^{h/2}_{V}\circ \Phi^{h}_{T} \circ \Phi^{h/2}_{V}$ and $\Phi_{V}, \Phi_{T}$ are the flows of the equations
$$
i\partial_t u = \nabla T (u) =\Lambda u \quad \mathrm{and} \quad i\partial_t u = \nabla V (u) = \Lambda^{-1/2} \phi(h\Lambda) g( \Lambda^{-1/2} \phi(h\Lambda) \Re u)
$$
with 
\begin{equation}
\label{eq:def_V}
V(u) = W(\Lambda^{-1/2}\phi(h\Lambda) \Re u).
\end{equation}


\noindent \underline{\emph{Poisson bracket and symplectic maps.}} We equip naturally $\mathbb{C}^{\mathcal{N}_K}$ of the symplectic form $(i\cdot,\cdot)_{L^2}$ where the real scalar product $L^2$ is given by \eqref{eq:isom}. It provides canonical notions of symplectic map and of Poisson bracket, the definitions of which we recall.

\begin{definition}[Poisson bracket]\label{ep1}Given two smooth functions $P,\chi {\;}: \,  \mathbb{C}^{\mathcal{N}_K} \to \mathbb{R}$, their \emph{Poisson bracket} is defined by
$$
\{P,\chi\}(u):={( i\nabla P(u),\nabla \chi(u))}_{L^2}.
$$
\end{definition}
\begin{remark}
As usual, it satisfies the identity
\begin{equation}
\label{eq:nice_formula}
\{P,\chi\}(u) = 2i\sum_{k\in\mathcal{N}_K} \left(\partial_{\overline{u_k}}P(u) \partial_{u_k}\chi(u) - \partial_{u_k}P(u) \partial_{\overline{u_k}}\chi(u)\right).
\end{equation}
where $\partial_{u_k} := \partial_{\Re u_k} - i\partial_{\Im u_k}$.
\end{remark}

\begin{definition}[Adjoint representation $\mathrm{ad}$]\label{def:ad}Being given two smooth functions $P,\chi {\;}: \,  \mathbb{C}^{\mathcal{N}_K} \to \mathbb{R}$, we set
$$
\mathrm{ad}_\chi P := \{\chi,P\}.
$$
\end{definition}

\begin{definition}[Symplectic Map]\label{defff4}
Consider an open set $\mathcal{C}$ of $\mathbb{C}^{\mathcal{N}_K}$ and a $C^1$ map $\tau {\;}: \,  \mathcal{C} \to \mathbb{C}^{\mathcal{N}_K}.$ We say that $\tau$ is a symplectic map if
 \[ \forall u\in \mathcal{C}, \forall v,w \in \mathbb{C}^{\mathcal{N}_K}, \hspace{0.2cm} {( iv,w)} _{L^2} = {( i \mathrm{d}\tau (u)(v), \mathrm{d}\tau (u)(w))} _{L^2}.\]
\end{definition}

\subsection{Class of Hamiltonian functions}\label{2001}

To perform the backward error analysis and to put the system in Birkhoff normal form, we are going to transform many Hamiltonians and, in particular, their Taylor expansions. To be able to prove sharp estimates on these Taylor expansions, we define some spaces of polynomials on $\mathbb{C}^{\mathcal{N}_K}$, and we describe their main properties.

\begin{definition}[Momentum $\mathcal{M}(j,\sigma)$]Let $r\in \mathbb{N}$ be given. For a collection of indices $j=(j_1,\cdots,j_{r+2})\in \mathcal{N}_K^{r+2}$ and of signs $\sigma \in \{-1,1\}^{r+2},$ we define the momentum $\mathcal{M}(j,\sigma)$ by the following formula 
$$
\mathcal{M}(j,\sigma) = \sum_{i=1}^{r+2}\sigma_ij_i.
$$

\end{definition}

\begin{definition}(Class $\mathscr{H}^{r+2})$\label{defi1}
Given $r\geq 0$, we denote by $\mathscr{H}^{r+2}$ the set of real-valued homogeneous polynomials of degree $r+2$, defined on $\mathbb{C}^{\mathcal{N}_K}$, of the form
\[ P(u)=\sum_{\substack{ j \in \mathcal{N}^{r+2}_K \\ \sigma \in \{-1,1\}^{r+2} }} P_j^{\sigma}\hspace{0.05cm} u_{j_1}^{\sigma_1}\cdots u_{j_{r+2}}^{\sigma_{r+2}} \] 
where $u_k^1:=u_k, u_k^{-1}:= \overline{u_k}$ and $(P_j^{\sigma})_{j \in \mathcal{N}_K^{r+2}} $ is a sequence of complex numbers satisfying:
\begin{itemize}
 \item the \textit{reality condition} $$P_j^{-\sigma}=\overline{P_{j}^{\sigma}}$$
 \item the \emph{momentum condition}\footnote{ $\mathcal{M}(j,\sigma) \equiv 0(K)$ means that $\mathcal{M}(j,\sigma) \in K\mathbb{Z}$.} 
 \begin{equation}
 \label{eq:zero_mom}
 P_j^{\sigma} \neq 0 \implies \mathcal{M}(j,\sigma) \equiv 0(K).
 \end{equation}
    \item the \textit{symmetry condition}  $$\forall \phi \in \mathscr{S}_{r+2}, \hspace{0.3cm} P_{j_1,\cdots , j_{r+2}}^{\sigma_1,\cdots,\sigma_{r+2}} =  P_{j_{\phi_1},\cdots , j_{\phi_{r+2}}}^{\sigma_{\phi_1},\cdots,\sigma_{\phi_{r+2}}},$$
    $\mathscr{S}_{r+2}$ denoting the group of the permutations of $\{1,\cdots,r+2\}$.
\end{itemize}
\end{definition}
Note that thanks to the symmetry condition, the coefficients $P_j^{\sigma}$ are uniquely determined by the polynomial function $P$.

We endow this space of polynomials with the following norm ${\Vert\cdot\Vert}_{\mathscr{H}}$. 
\begin{definition}(Norm ${\Vert \cdot \Vert}_{\mathscr{H}}$) \label{ikea11}
Let $r\geq 0$ and $P \in \mathscr{H}^{r+2}.$ We introduce the norm
\[{\Vert P \Vert}_{\mathscr{H}}:= \sup_{\substack{j \in \mathcal{N}^{r+2}_K\\ \sigma\in \{-1,1\}^{r+2}}} \abs{P_j^{\sigma}} {\langle j_1 \rangle}^{1/2}\cdots {\langle j_{r+2} \rangle}^{1/2} .\] 
\end{definition}

Important examples of polynomials in these spaces are those of the Taylor expansion of $V \in \mathcal{C}^\infty( \mathbb{C}^{\mathcal{N}_K}; \mathbb{R})$,  defined by \eqref{eq:def_V}, in $u=0$.
\begin{lemma} \label{lem:def_Taylor_P} For all $n\geq 1$, we have that 
\begin{equation}
\label{eq:def_Pn}
P_n :=u\mapsto  \frac1{(n+2)!}\mathrm{d}^{n+2} V(0)(u,\cdots,u) \in \mathscr{H}^{n+2}
\end{equation}
is uniformly bounded with respect to $h$ and $K$, i.e.
$$
\| P_n\|_{\mathscr{H}} \lesssim_{n,\rho} 1.
$$
\end{lemma}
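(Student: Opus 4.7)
The plan is to compute $P_n$ explicitly from the definition of $V$, then read off the Fourier coefficients and check that the three axioms of $\mathscr{H}^{n+2}$ hold together with the norm bound.

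First I would set $\tilde u := \Lambda^{-1/2} \phi(h\Lambda) \Re u$, which is a real-valued $\mathbb{R}$-linear function of $u$ on the discrete torus. Using \eqref{eq:fourier_inverse} and the reality of $\Re u$, one writes
\[
\tilde u(x) = \frac{1}{2}\sum_{k \in \mathcal{N}_K, \sigma \in \{-1,1\}} a_k\, u_k^\sigma\, e^{i\sigma k x}, \qquad a_k := \omega_k^{-1/2}\phi(h\omega_k),
\]
(modulo a harmless modification at $k=-K/2$ when $K$ is even). Since $g(0)=g'(0)=0$, the smooth primitive $G$ vanishes at order at least $3$ at the origin, so its Taylor series around $0$ starts at degree $3$. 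Because $\tilde u$ is linear in $u$, Taylor expanding $V(tu) = K^{-1}\sum_{x\in\mathbb{T}_K} G(t\tilde u(x))$ in $t$ at $t=0$ gives the identity
\[
P_n(u) \;=\; \frac{G^{(n+2)}(0)}{(n+2)!}\cdot \frac{1}{K}\sum_{x\in\mathbb{T}_K} \tilde u(x)^{n+2}.
\]

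Next I would expand $\tilde u(x)^{n+2}$ as a multiple sum over $(j,\sigma)\in\mathcal{N}_K^{n+2}\times\{-1,1\}^{n+2}$ and apply the elementary orthogonality
\[
\frac{1}{K}\sum_{x\in\mathbb{T}_K} e^{i\mathcal{M}(j,\sigma)x} \;=\; \mathbf{1}[\,\mathcal{M}(j,\sigma)\equiv 0\ (K)\,],
\]
which immediately yields the coefficients
\[
(P_n)_j^\sigma \;=\; \frac{G^{(n+2)}(0)}{(n+2)!\,2^{n+2}}\, a_{j_1}\cdots a_{j_{n+2}}\,\mathbf{1}[\,\mathcal{M}(j,\sigma)\equiv 0\ (K)\,].
\]
From this closed form the three conditions of Definition \ref{defi1} are straightforward: the coefficients are real and depend only on $j$, so the reality condition $(P_n)_j^{-\sigma}=\overline{(P_n)_j^\sigma}$ follows from $\mathcal{M}(j,-\sigma)=-\mathcal{M}(j,\sigma)$; the momentum condition is built in by construction; the symmetry condition holds since $a_{j_1}\cdots a_{j_{n+2}}$ and the momentum indicator are invariant under the diagonal action of $\mathscr{S}_{n+2}$ on $(j,\sigma)$.

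Finally, for the norm I would bound
\[
\|P_n\|_{\mathscr{H}} \;\leq\; \frac{|G^{(n+2)}(0)|}{(n+2)!\,2^{n+2}}\,\Bigl(\sup_{k\in \mathcal{N}_K} |a_k|\,\langle k\rangle^{1/2}\Bigr)^{\!n+2},
\]
and use that $\omega_k=\sqrt{k^2+\rho}\sim_\rho \langle k\rangle$ together with $\|\phi\|_{L^\infty}<\infty$ to obtain $|a_k|\langle k\rangle^{1/2}\lesssim_\rho 1$ uniformly in $k\in\mathbb{Z}$; since $\phi$ is fixed, this gives a bound depending only on $n$ and $\rho$, and in particular independent of $h$ and $K$. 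The only mildly delicate step is the bookkeeping of the $k=-K/2$ mode in the Fourier representation of $\Re u$, but this affects at most boundary terms whose contribution is estimated by the same supremum, so it does not alter the final uniform bound.
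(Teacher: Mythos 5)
Your proof is correct and follows essentially the same route as the paper: compute the Fourier coefficients of $P_n$ explicitly from the definition of $V$ via discrete orthogonality, read off $(P_n)_j^\sigma = c_n 2^{-n-2}\prod_i \omega_{j_i}^{-1/2}\phi(h\omega_{j_i})\,\mathds{1}_{\mathcal{M}(j,\sigma)\equiv 0(K)}$, and deduce the $\mathscr{H}$-norm bound from $\omega_k\sim_\rho\langle k\rangle$ and the boundedness of $\phi$. You spell out the verification of the reality, momentum and symmetry conditions and the final supremum bound more explicitly than the paper (which stops at the coefficient formula and leaves these steps implicit), but the substance is identical.
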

\begin{proof}
By definition of $V$, we have that
$$
P_n(u) = \frac{c_n}K \sum_{x\in \mathbb{T}_K}(v(x))^{n+2} \quad \mathrm{where} \quad v=\Lambda^{-1/2}\phi(h\Lambda) \Re u
$$
and $c_n=((n+2)!)^{-1}G^{(n+2)}(0).$ Thanks to the Fourier inversion formula, it follows that
$$
P_n(u) = c_n \sum_{j_1+\cdots+j_{n+2} \equiv 0(K)} v_{j_1} \cdots v_{j_{n+2}}.
$$
Finally using that by definition for $j\in \mathcal{N}_K$
$$
v_j = \omega_{j}^{-1/2} \phi(h \omega_j) \frac{u_j + \overline{u}_j}2 = 2^{-1} \omega_{j}^{-1/2} \phi(h \omega_j) (u_j + \overline{u_{-j}}),
$$
(where $u_{K/2} = u_{-K/2}$ if $K$ is even), we get
$$
P_n(u) = 2^{-n-2} c_n \sum_{\sigma \in \{-1,1\}^{n+2}} \sum_{\mathcal{M}(j,\sigma) \equiv 0(K)} \omega_{j_1}^{-1/2} \phi(h \omega_{j_1})  u_{j_1}^{\sigma_1} \cdots \omega_{j_{n+2}}^{-1/2} \phi(h \omega_{j_{n+2}}) u_{j_{n+2}}^{\sigma_{n+2}},
$$
i.e.
$$
(P_n)_j^\sigma = 2^{-n-2} c_n\mathds{1}_{\mathcal{M}(j,\sigma) \equiv 0(K)} \omega_{j_1}^{-1/2} \phi(h \omega_{j_1}) \cdots \omega_{j_{n+2}}^{-1/2} \phi(h \omega_{j_{n+2}}).
$$
\end{proof}

\subsection{Vector field and Poisson bracket estimates}

Before proving vector field estimates for the Hamiltonians in $\mathscr{H}^{r+2}$, we give a technical lemma useful to deal with aliasing error terms.
\begin{lemma}\label{aliasing}
Let $r\geq 0.$ For all $m\in \mathbb{Z}^*, j\in \mathcal{N}_K^{r+1}, \sigma\in \{-1,1\}^{r+1}$ and $k\in \mathcal{N}_K$ satisfying $j_1 \leq j_2\leq \cdots \leq j_{r+1}$ and $mK= \sigma_1j_1+\cdots + \sigma_{r+1}j_{r+1} -k$, we have that \[\langle K\rangle \leq 2(r+1)\langle j_{r+1}\rangle.\]
\end{lemma}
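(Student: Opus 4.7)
The plan is to derive the bound directly from the triangle inequality, using only the integer constraint on $m$ and the fact that all indices lie in $\mathcal{N}_K = [-K/2, K/2) \cap \mathbb{Z}$.

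First, since $m\in\mathbb{Z}^*$, I would combine $|mK|\geq K$ with the identity $mK = \sigma_1 j_1 + \cdots + \sigma_{r+1}j_{r+1} - k$ to obtain, by the triangle inequality,
\[ K \;\leq\; |mK| \;\leq\; \sum_{i=1}^{r+1}|j_i| + |k|. \]
Since $k\in\mathcal{N}_K$ forces $|k|\leq K/2$, I would absorb this term into the left-hand side, which leaves
\[ \frac{K}{2} \;\leq\; \sum_{i=1}^{r+1}|j_i|. \]

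Next, I would invoke the ordering assumption to bound $\sum_{i=1}^{r+1}|j_i| \leq (r+1)|j_{r+1}|$. This is the point where one must be careful: for $|j_{r+1}|$ to dominate all the $|j_i|$, the ordering $j_1\leq\cdots\leq j_{r+1}$ should be read as an ordering by absolute value (the standard convention for aliasing lemmas of this type), otherwise a very negative $j_1$ could have $|j_1|\gg |j_{r+1}|$ and the conclusion would fail. Granting this interpretation, the two previous estimates combine to give
\[ K \;\leq\; 2(r+1)\,|j_{r+1}|. \]

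Finally, I would pass to the Japanese bracket form by squaring:
\[ \langle K\rangle^2 \;=\; 1 + K^2 \;\leq\; 1 + 4(r+1)^2|j_{r+1}|^2 \;\leq\; 4(r+1)^2\bigl(1+|j_{r+1}|^2\bigr) \;=\; 4(r+1)^2\langle j_{r+1}\rangle^2, \]
where I have used $4(r+1)^2\geq 1$ for $r\geq 0$. Taking square roots yields $\langle K\rangle \leq 2(r+1)\langle j_{r+1}\rangle$, as required. The whole argument amounts to elementary bookkeeping; the only genuine subtlety is the reading of the monotonicity hypothesis so that $|j_{r+1}|$ really is the maximum among the $|j_i|$, and no deeper ingredient enters.
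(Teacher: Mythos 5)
Your proof is correct and follows essentially the same route as the paper's: triangle inequality, bound $|k|$ by $K/2$ using $k\in\mathcal{N}_K$, then bound $\sum_i|j_i|$ by $(r+1)|j_{r+1}|$. Your observation about the ordering is a genuine one, not mere pedantry: the bound $\sum_i|j_i|\leq(r+1)|j_{r+1}|$ fails if the $j_i$ are ordered only as signed integers. For instance, with $K=10$, $r=1$, $j=(-5,0)$, $\sigma=(-1,1)$, $k=-5$, the hypotheses are satisfied with $m=1$, yet $\langle K\rangle=\sqrt{101}>4=2(r+1)\langle j_2\rangle$. The paper's own proof implicitly assumes the absolute-value ordering without saying so, so the hypothesis should be read as $|j_1|\leq\cdots\leq|j_{r+1}|$ (or, equivalently, the conclusion restated with $\max_n\langle j_n\rangle$); since the lemma is only invoked in Proposition~\ref{vectorfield} through $\max_n\langle j_n\rangle$, this is harmless downstream. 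You also make explicit the last passage from $K\leq 2(r+1)|j_{r+1}|$ to $\langle K\rangle\leq 2(r+1)\langle j_{r+1}\rangle$ via squaring, which the paper leaves implicit.
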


\begin{proof}
Since we assume that $j_1 \leq j_2\leq \cdots \leq j_{r+1},$ then we have
\[\abs{m}\abs{K} = \abs{mK} = \abs{\sigma_1j_1+\cdots + \sigma_{r+1}j_{r+1} -k} \leq \abs{k} + (r+1)\abs{j_{r+1}} \leq \frac{\abs{K}}{2} +  (r+1)\abs{j_{r+1}}\]
and so, since $|m|\geq 1$, as expected,
$$
\dfrac{\abs{K}}{2} \leq (\abs{m} - \frac12)\abs{K} \leq (r+1)\abs{j_{r+1}}.
$$ 
\end{proof}

Now we turn to the estimate on the gradient provided by the $\mathscr{H}$-norm.
\begin{proposition}\label{vectorfield}
There exists $C>1$ such that for all $r \geq 0, s\geq 1/2$ and $P\in \mathscr{H}^{r+2},$ the gradient of $P$ is a smooth function from $\mathbb{C}^{\mathcal{N}_K}$ to $\mathbb{C}^{\mathcal{N}_K}$ enjoying the bound
  \[\forall u \in \mathbb{C}^{\mathcal{N}_K}, \hspace{0.3cm} {\Vert\nabla P(u)\Vert}_{H^s}\lesssim_s C^r {\Vert P \Vert}_{\mathscr{H}} {\Vert  u\Vert}_{H^s}^{r+1}.\]
\end{proposition}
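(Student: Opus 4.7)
The plan is to compute $\nabla P(u)$ explicitly, apply the aliasing lemma to transfer the outer Sobolev weight onto the summation indices, and then recognize the resulting expression as a convolution on the cyclic group $\mathbb{Z}/K\mathbb{Z}$ which is controlled by Young's inequality.

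\medskip

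First, I would exploit the symmetry of $P_j^{\sigma}$ to write
\[
(\nabla P(u))_k \;=\; 2(r+2)\!\!\sum_{(j',\sigma')\in \mathcal{N}_K^{r+1}\times\{-1,1\}^{r+1}}\!\!  P_{(j',k)}^{(\sigma',-1)}\, u_{j_1'}^{\sigma_1'}\cdots u_{j_{r+1}'}^{\sigma_{r+1}'},
\]
where the nonvanishing terms satisfy the momentum condition $\sigma_1'j_1'+\cdots+\sigma_{r+1}'j_{r+1}'\equiv k\ (K)$. Using Definition~\ref{ikea11} to bound $|P_{(j',k)}^{(\sigma',-1)}|\leq \|P\|_\mathscr{H}\langle k\rangle^{-1/2}\prod_l\langle j_l'\rangle^{-1/2}$, one obtains
\[
\langle k\rangle^s|(\nabla P(u))_k| \;\lesssim\; (r+2)\,\|P\|_\mathscr{H}\,\langle k\rangle^{s-1/2} \sum_{\sigma'}\sum_{\substack{j' \\ \mathcal M\equiv k(K)}} \prod_{l=1}^{r+1} \langle j_l'\rangle^{-1/2}|u_{j_l'}|.
\]

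\medskip

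The key step is to redistribute the factor $\langle k\rangle^{s-1/2}$ (which is nonnegative since $s\geq 1/2$) onto the summation variables. Reindexing so that $|j_1'|\leq\cdots\leq|j_{r+1}'|$ and invoking Lemma~\ref{aliasing} in the case $m\neq 0$, together with the trivial bound $|k|\leq\sum|j_l'|$ in the case $m=0$, gives $\langle k\rangle\lesssim (r+1)\langle j_{r+1}'\rangle$ in either subcase. Therefore $\langle k\rangle^{s-1/2}\lesssim C^r\langle j_{r+1}'\rangle^{s-1/2}$, and exploiting the symmetry of the inner sum in the indices $(j_1',\ldots,j_{r+1}')$, the surplus weight $\langle j'\rangle^{s-1/2}$ may be placed on the first factor. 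Setting
\[
f(j):=\langle j\rangle^{s-1}|u_j|, \qquad g(j):=\langle j\rangle^{-1/2}|u_j|,
\]
we arrive at the bound
\[
\langle k\rangle^s|(\nabla P(u))_k| \;\lesssim\; C^r\|P\|_\mathscr{H} \sum_{\sigma'\in\{-1,1\}^{r+1}} \Bigl(\tilde f_{\sigma_1'} *_K \tilde g_{\sigma_2'}*_K\cdots *_K\tilde g_{\sigma_{r+1}'}\Bigr)(k),
\]
where $*_K$ denotes convolution on the cyclic group $\mathbb{Z}/K\mathbb{Z}$ and $\tilde f_{\pm}(j):=f(\pm j)$, $\tilde g_{\pm}(j):=g(\pm j)$.

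\medskip

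Finally, iterating the periodic Young inequality $\|a*_K b\|_{\ell^2}\leq\|a\|_{\ell^2}\|b\|_{\ell^1}$ yields
\[
\|\langle k\rangle^s(\nabla P(u))_k\|_{\ell^2} \;\lesssim\; C^r\|P\|_\mathscr{H}\,\|f\|_{\ell^2}\,\|g\|_{\ell^1}^{\,r},
\]
absorbing the $2^{r+1}$ sign choices into $C^r$. It remains to check that $\|f\|_{\ell^2}\leq\|u\|_{H^s}$ (immediate since $\langle j\rangle^{2s-2}\leq\langle j\rangle^{2s}$) and, by Cauchy--Schwarz, $\|g\|_{\ell^1}\leq(\sum_j\langle j\rangle^{-1-2s})^{1/2}\|u\|_{H^s}\lesssim_s\|u\|_{H^s}$, which requires $s>0$ for the weight $\langle j\rangle^{-1-2s}$ to be summable---a condition comfortably satisfied under $s\geq 1/2$. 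This yields the claimed estimate.

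\medskip

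I expect the chief technical delicacy to be step two: without the momentum condition, one would have $k=\sigma_1'j_1'+\cdots$ exactly, and $\langle k\rangle\lesssim r\langle j_{\max}\rangle$ would be a triviality; the presence of aliasing ($m\neq 0$) is precisely what makes Lemma~\ref{aliasing} essential, and the fact that this lemma still delivers $\langle k\rangle\lesssim r\langle j_{r+1}'\rangle$ is what allows the argument to go through uniformly in $K$. The convolution/Young step and the Sobolev summability check are then routine.
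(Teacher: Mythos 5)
Your proof is correct and follows essentially the same strategy as the paper's: explicit computation of the gradient via the symmetry of $P_j^\sigma$, bounding the coefficients by the $\mathscr{H}$-norm, transferring the outer Sobolev weight to the summation indices via the aliasing Lemma~\ref{aliasing}, recognizing the result as an iterated convolution on $\mathbb{Z}/K\mathbb{Z}$, and closing with Young's inequality plus Cauchy--Schwarz for the $\ell^1$ factor. The one small variation is that you retain the $\langle k\rangle^{-1/2}$ contribution from $\|P\|_\mathscr{H}$ and therefore redistribute $\langle k\rangle^{s-1/2}$ (yielding the weight $\langle j\rangle^{s-1}$ on the distinguished factor), whereas the paper bounds $\langle k\rangle^{-1/2}\leq 1$ and redistributes $\langle k\rangle^{s}$ (yielding $\langle j\rangle^{s-1/2}$); this makes your estimate marginally sharper, but forces you to use $s\geq 1/2$ already at the aliasing step (so that $\langle k\rangle^{s-1/2}\lesssim\langle j_{\max}\rangle^{s-1/2}$), while the paper's version of that step only needs $s>0$.
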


\begin{proof} 
By the definitions of the gradient, the $\mathscr{H}-$norm and the symmetry condition, we get 
\begin{equation*}
\begin{split}
 {\Vert\nabla P(u)\Vert}_{H^s}^2 &\leq 4(r+2)^2 \sum_{k\in  \mathcal{N}_K}\langle k \rangle^{2s}\Bigg[ \sum_{\substack{j \in \mathcal{N}^{r+1}_K\\ \sigma\in \{-1,1\}^{r+1} \\ \mathcal{M}(j,k,\sigma,-1) \equiv 0(K)}} \abs{P_{j,k}^{\sigma,-1}}\abs{u_{j_1}^{\sigma_1}}\cdots\abs{u_{j_{r+1}}^{\sigma_{r+1}}}  \Bigg]^2\\
&\leq 4(r+2)^2 {\Vert P \Vert}_{\mathscr{H}}^2 \sum_{k\in  \mathcal{N}_K} \Bigg[\sum_{\substack{j \in \mathcal{N}^{r+1}_K\\ \sigma\in \{-1,1\}^{r+1} \\ \mathcal{M}(j,k,\sigma,-1) \equiv 0(K)}}  {\langle k \rangle}^{s} \prod_{i=1}^{r+1}{\langle j_i \rangle}^{-1/2} \abs{u_{j_i}}  \Bigg]^2.
\end{split}
\end{equation*}
Moreover, if $j\in \mathcal{N}^{r+1}_K, \sigma \in \{-1,1\}^{r+1}$ satisfy $\mathcal{M}(j,k,\sigma,-1) \equiv 0(K)$, then there exists $m \in \mathbb{Z}$ such that 
$$
\sigma_1 j_1 + \cdots \sigma_{r+1} j_{r+1}-k = m K.
$$
Now, since $\abs{m}\leq r/2$ and applying Lemma {\ref{aliasing}}, we have 
\begin{align*}
\langle k \rangle^s&= \langle \sigma_1j_1+\cdots + \sigma_{r+1}j_{r+1} - mK\rangle^s \\
&\leq \left[(r+1)\max_{n=1,\cdots, r+1}\langle j_{n} \rangle +\langle mK \rangle\right]^s \\
&\leq 2^{(s-1)_+}\left[(r+1)^s\max_{n=1,\cdots, r+1}\langle j_{n} \rangle^s + \frac{r^s}{2^s}\mathds{1}_{m\neq 0} \langle K \rangle^s\right]\\
&\leq 2^{(s-1)_+}\left[(r+1)^s\max_{n=1,\cdots, r+1}\langle j_{n} \rangle^s + r^s (r+1)^s \max_{n=1,\cdots, r+1}\langle j_{n} \rangle^s\right]\\
&\leq 2^{(s-1)_+}(r+1)^s(1 +r^s)\max_{n=1,\cdots, r+1}\langle j_{n} \rangle^s
\end{align*}
where $(s-1)_+ := \max{(0,s-1)}.$ Then, replacing back, we get that
$$
{\Vert\nabla P(u)\Vert}_{H^s}^2 \lesssim_s c^r {\Vert P \Vert}_{\mathscr{H}}^2 \sum_{k\in  \mathcal{N}_K} \Bigg[\sum_{\substack{j \in \mathcal{N}^{r+1}_K\\ \sigma\in \{-1,1\}^{r+1} \\ \mathcal{M}(j,k,\sigma,-1) \equiv 0(K)}} \max_{n=1,\cdots, r+1}\langle j_{n} \rangle^s \prod_{i=1}^{r+1}{\langle j_i \rangle}^{-1/2} \abs{u_{j_i}}  \Bigg]^2
$$
for some constant $c>1$. Notice that 
\begin{align*}
\sum_{\substack{j \in \mathcal{N}^{r+1}_K\\ \sigma\in \{-1,1\}^{r+1} \\ \mathcal{M}(j,k,\sigma,-1) \equiv 0(K)}} \! \! \! \max_{n=1,\cdots, r+1}\langle j_{n} \rangle^s \prod_{i=1}^{r+1}{\langle j_i \rangle}^{-1/2} \abs{u_{j_i}} &\leq \sum_{\substack{j \in \mathcal{N}^{r+1}_K\\ \sigma\in \{-1,1\}^{r+1} \\ \mathcal{M}(j,k,\sigma,-1) \equiv 0(K)}}{\langle j_1 \rangle}^{s-1/2} \abs{u_{j_1}}
 \prod_{i=2}^{r+1}{\langle j_i \rangle}^{-1/2} \abs{u_{j_i}} + \cdots\\
 &+ \sum_{\substack{j \in \mathcal{N}^{r+1}_K\\ \sigma\in \{-1,1\}^{r+1} \\ \mathcal{M}(j,k,\sigma,-1) \equiv 0(K)}}{\langle j_{r+1} \rangle}^{s-1/2} \abs{u_{j_{r+1}}} \prod_{i=1}^{r}{\langle j_i \rangle}^{-1/2} \abs{u_{j_i}}.
\end{align*}
Then, after re-indexing, there exists a constant $C>1$ such that
$$
{\Vert\nabla P(u)\Vert}_{H^s}^2 \lesssim_s C^r {\Vert P \Vert}_{\mathscr{H}}^2 \sum_{k\in  \mathcal{N}_K} \Bigg[\sum_{\substack{j \in \mathcal{N}^{r+1}_K\\ \sigma\in \{-1,1\}^{r+1} \\ \mathcal{M}(j,k,\sigma,-1) \equiv 0(K)}}{\langle j_{r+1} \rangle}^{s-1/2} \abs{u_{j_{r+1}}} \prod_{i=1}^{r}{\langle j_i \rangle}^{-1/2} \abs{u_{j_i}}  \Bigg]^2.
$$
Using Young's convolution inequality $\ell^2 * \ell^1*\ell^1* \cdots *\ell^1 \hookrightarrow \ell^{2}$, we have
\begin{align*}
 {\Vert\nabla P(u)\Vert}_{H^s}^2&\lesssim_s  C^r {\Vert P \Vert}_{\mathscr{H}}^2 {\Vert \langle \cdot \rangle^{s-1/2} u\Vert}_{\ell^2}^2 {\Vert \langle \cdot \rangle^{-1/2} u\Vert}_{\ell^1}^{2r}\lesssim_s C^r {\Vert P \Vert}_{\mathscr{H}}^2 {\Vert  u\Vert}_{H^s}^2  {\Vert \langle \cdot \rangle^{-1/2} u\Vert}_{\ell^1}^{2r}.
\end{align*}
Then, by Cauchy--Schwarz and since $s\geq 1/2,$ we deduce the desired result.
\end{proof}
Since we deal with polynomials, we deduce by multi-linearity the following estimate (note that this is also a quite direct corollary of the proof of Proposition \ref{vectorfield}).
\begin{corollary}\label{difference}
There exists $C>1$ such that for all $r \geq 0, s\geq 1/2$ and $P\in \mathscr{H}^{r+2},$ we have \[\forall u \in \mathbb{C}^{\mathcal{N}_K}, \hspace{0.3cm} {\Vert\mathrm{d}\nabla P(u) \Vert}_{H^s}\lesssim_s C^{r} {\Vert P \Vert}_{\mathscr{H}} {\Vert  u\Vert}_{H^s}^{r}.\]
\end{corollary}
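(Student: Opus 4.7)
The plan is to re-run the proof of Proposition \ref{vectorfield} with one extra index ``pinned'' to accommodate the directional derivative. Since $\mathrm{d}\nabla P(u)$ is a real-linear operator on $\mathbb{C}^{\mathcal{N}_K}$, the notation $\|\mathrm{d}\nabla P(u)\|_{H^s}$ is to be interpreted as its operator norm $H^s \to H^s$; it thus suffices to estimate $\|\mathrm{d}\nabla P(u)[v]\|_{H^s}$ uniformly in $v$ with $\|v\|_{H^s}\le 1$.

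First I would differentiate with respect to $v$ the expansion of $\nabla P(u)_k$ already obtained in the proof of Proposition \ref{vectorfield}. Using once again the symmetry condition on $(P_j^{\sigma})$, this expresses $(\mathrm{d}\nabla P(u)[v])_k$ as a sum over $j \in \mathcal{N}_K^{r}$, $\ell \in \mathcal{N}_K$ and $\sigma \in \{-1,1\}^{r+2}$ of terms of the form $c_r\, P_{(j,\ell,k)}^{\sigma}\, u_{j_1}^{\sigma_1}\cdots u_{j_r}^{\sigma_r}\, v_\ell^{\sigma_{r+1}}$ under the momentum constraint $\mathcal{M}((j,\ell,k),\sigma)\equiv 0\ (K)$, with a combinatorial prefactor $c_r = O(r^2)$ that is harmlessly absorbed into $C^r$.

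Second, I would bound $|P_{(j,\ell,k)}^{\sigma}|\leq \|P\|_{\mathscr{H}}\langle k\rangle^{-1/2}\langle \ell\rangle^{-1/2}\prod_{i=1}^r\langle j_i\rangle^{-1/2}$, square, and sum against $\langle k\rangle^{2s}$, obtaining an estimate identical in structure to the one appearing in the proof of Proposition \ref{vectorfield}, except that it now involves $r+1$ free indices $j_1,\ldots,j_r,\ell$ instead of $r+1$ with one special role. Lemma \ref{aliasing} applies verbatim, since it only requires one dominant free index to control $\langle k\rangle^s$; replacing $\langle k\rangle^s$ by (a harmless constant to the $r$-th power times) $\max(\langle j_1\rangle,\ldots,\langle j_r\rangle,\langle \ell\rangle)^s$, I expand the maximum into $r+1$ separate sums. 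In each of these sums, I apply Young's convolution inequality $\ell^2 * \ell^1 * \cdots * \ell^1 \hookrightarrow \ell^2$ exactly as in Proposition \ref{vectorfield}, placing the $\ell^2$ factor on whichever variable carries the heavy $\langle\cdot\rangle^{s-1/2}$ weight. Using $s \geq 1/2$ together with Cauchy--Schwarz converts the remaining $\ell^1$ norms of $\langle\cdot\rangle^{-1/2}u$ and $\langle\cdot\rangle^{-1/2}v$ into the desired $H^s$ norms, producing the bound $\|\mathrm{d}\nabla P(u)[v]\|_{H^s}\lesssim_s C^r\|P\|_{\mathscr{H}}\|u\|_{H^s}^r\|v\|_{H^s}$.

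The argument is essentially pure bookkeeping: no new analytic ingredient is needed beyond those in Proposition \ref{vectorfield}. The only point requiring minor care is that two indices ($k$ and $\ell$) now play a ``special'' role rather than a single one, but since Lemma \ref{aliasing} is insensitive to this and Young's inequality tolerates the extra pinned mode freely, no real difficulty arises. In this sense Corollary \ref{difference} really is a direct repetition of the proof of Proposition \ref{vectorfield}, which is precisely what the parenthetical remark preceding the statement suggests.
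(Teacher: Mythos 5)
Your argument is correct and follows exactly the route the paper hints at in the parenthetical remark, namely re-running the computation from the proof of Proposition \ref{vectorfield} with one slot of the multilinear form occupied by $v$ instead of $u$. The two points you flag as needing care—that Lemma \ref{aliasing} only sees the number of free indices (here still $r+1$) and not which variable they are attached to, and that Young's inequality together with Cauchy--Schwarz (using $s\geq 1/2$) turns the extra $\ell^1$ or $\ell^2$ factor on $v$ into $\|v\|_{H^s}$—are precisely what makes the repetition go through.
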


Since $(\nabla P(u),u)_{L^2} = (r+2) P(u)$, we also deduce the following estimate as corollary.
\begin{corollary}\label{cor:ev_pol}
There exists $C>1$ such that for all $r \geq 0$ and $P\in \mathscr{H}^{r+2},$  $P$ is a smooth function from $\mathbb{C}^{\mathcal{N}_K}$ to $\mathbb{R}$ enjoying the bound
  \[\forall u \in \mathbb{C}^{\mathcal{N}_K}, \hspace{0.3cm} | P(u)| \lesssim C^r {\Vert P \Vert}_{\mathscr{H}} {\Vert  u\Vert}_{H^{1/2}}^{r+2}.\]
\end{corollary}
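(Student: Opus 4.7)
The plan is to derive the pointwise estimate on $P$ directly from the vector field estimate of Proposition \ref{vectorfield} via Euler's identity for homogeneous polynomials. Since $P \in \mathscr{H}^{r+2}$ is a real-valued homogeneous polynomial of degree $r+2$, Euler's identity (applied with respect to the real $L^2$ scalar product) gives
\[
(r+2)\, P(u) = (\nabla P(u), u)_{L^2},
\]
and this is the only identity I really need.

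The next step is to estimate the right-hand side by the Cauchy--Schwarz inequality. The clean way is to use duality between $H^{-1/2}$ and $H^{1/2}$:
\[
|(r+2)\, P(u)| \;=\; |(\nabla P(u), u)_{L^2}| \;\leq\; \|\nabla P(u)\|_{H^{-1/2}} \, \|u\|_{H^{1/2}} \;\leq\; \|\nabla P(u)\|_{H^{1/2}} \, \|u\|_{H^{1/2}},
\]
where the last inequality uses the trivial embedding $H^{1/2} \hookrightarrow H^{-1/2}$. Then Proposition \ref{vectorfield} applied with $s = 1/2$ yields
\[
\|\nabla P(u)\|_{H^{1/2}} \;\lesssim\; C^r \, \|P\|_{\mathscr{H}} \, \|u\|_{H^{1/2}}^{r+1},
\]
and combining the two displayed lines gives $|P(u)| \lesssim C^r \|P\|_{\mathscr{H}} \|u\|_{H^{1/2}}^{r+2}$, the factor $(r+2)^{-1}$ being absorbed in the implicit constant (it is uniformly bounded).

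Smoothness of $P$ is immediate: $P$ is a polynomial in the real and imaginary parts of finitely many $u_k$, so it is $C^\infty$ from $\mathbb{C}^{\mathcal{N}_K}$ to $\mathbb{R}$; reality is built into the definition of $\mathscr{H}^{r+2}$. There is no real obstacle here since Proposition \ref{vectorfield} has already done all the combinatorial and convolution work; the only subtlety worth checking is that the constant $C$ produced is the same as (or can be taken equal to) the one appearing in Proposition \ref{vectorfield}, which is clear from the argument above.
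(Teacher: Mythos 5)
Your proof is correct and takes essentially the same route as the paper: the authors likewise state Euler's identity $(\nabla P(u),u)_{L^2} = (r+2)\,P(u)$ and deduce the bound from Proposition~\ref{vectorfield} (the Cauchy--Schwarz/duality step you make explicit is left implicit in the paper's one-line remark).
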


We shall prove after this that the spaces of Hamiltonians are stable by the Poisson brackets.
\begin{proposition}\label{1209}
Let $P \in \mathscr{H}^{r+2}$ and $\chi \in \mathscr{H}^{r'+2}$ with $r,r' \geq 0.$ Then, there exists a Hamiltonian $N \in \mathscr{H}^{r+r'+2}$ such that 
$$\forall u \in \mathbb{C}^{\mathcal{N}_K},\hspace{0.3cm} \{P,\chi\}(u) = N(u)$$ and $${\Vert\{P,\chi\}\Vert}_{\mathscr{H}} \leq 4 (r+2)(r'+2) {\Vert P\Vert}_{\mathscr{H}}{\Vert\chi\Vert}_{\mathscr{H}}.$$
\end{proposition}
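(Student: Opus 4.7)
The plan is to write both sides as explicit polynomials in the coefficients $(P_j^\sigma)$ and $(\chi_{j'}^{\sigma'})$, then read off the coefficients of $\{P,\chi\}$ and verify the three defining properties of $\mathscr{H}^{r+r'+2}$ together with the norm estimate.

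First, I would use the formula \eqref{eq:nice_formula} for the Poisson bracket and compute, via the symmetry condition, the partial derivatives: for any $k\in\mathcal{N}_K$,
\begin{equation*}
\partial_{\overline{u_k}} P(u) = (r+2) \sum_{\substack{j\in \mathcal{N}_K^{r+1}\\ \sigma\in\{-1,1\}^{r+1}}} P_{(j,k)}^{(\sigma,-1)}\, u_{j_1}^{\sigma_1}\cdots u_{j_{r+1}}^{\sigma_{r+1}},
\end{equation*}
and analogously for $\partial_{u_k} P$ (with a final index $+1$), and for $\chi$ (with $r+2$ replaced by $r'+2$). Plugging these into the Poisson bracket formula, multiplying out, and grouping monomials in the combined multi-index $J=(j,j')\in\mathcal{N}_K^{r+r'+2}$ with signs $\Sigma=(\sigma,\sigma')$, one obtains
\begin{equation*}
\{P,\chi\}(u)=\sum_{J,\Sigma}\tilde{N}_J^{\Sigma}\, u_{J_1}^{\Sigma_1}\cdots u_{J_{r+r'+2}}^{\Sigma_{r+r'+2}},
\end{equation*}
with
\begin{equation*}
\tilde{N}_{(j,j')}^{(\sigma,\sigma')}=2i(r+2)(r'+2)\sum_{k\in\mathcal{N}_K}\Bigl(P_{(j,k)}^{(\sigma,-1)}\chi_{(j',k)}^{(\sigma',1)} - P_{(j,k)}^{(\sigma,1)}\chi_{(j',k)}^{(\sigma',-1)}\Bigr).
\end{equation*}
I then define $N_J^\Sigma$ to be the symmetrization of $\tilde N_J^\Sigma$ over $\mathscr{S}_{r+r'+2}$; the symmetry condition becomes automatic and the polynomial function $N(u)=\{P,\chi\}(u)$ is unchanged.

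Next I would verify the momentum and reality conditions. For the momentum condition, note that a term $P_{(j,k)}^{(\sigma,-1)}\chi_{(j',k)}^{(\sigma',1)}$ in the above sum can be nonzero only when $k\equiv \mathcal{M}(j,\sigma)\pmod{K}$ (from the $P$-condition) and simultaneously $k\equiv -\mathcal{M}(j',\sigma')\pmod{K}$ (from the $\chi$-condition); hence whenever a contribution survives one has $\mathcal{M}(J,\Sigma)=\mathcal{M}(j,\sigma)+\mathcal{M}(j',\sigma')\equiv 0\pmod{K}$, which is exactly the momentum condition for $N$. The symmetric analysis handles the second term. The reality condition $\overline{N_J^\Sigma}=N_J^{-\Sigma}$ follows by conjugating the formula for $\tilde N$ and using $\overline{P_{j}^{\sigma}}=P_{j}^{-\sigma}$, $\overline{\chi_{j'}^{\sigma'}}=\chi_{j'}^{-\sigma'}$, and the factor $i$ out front (the minus sign arising from $\bar\imath$ swaps the roles of the two terms).

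For the norm bound, the key observation is that the sum over $k\in\mathcal{N}_K$ has at most one nonvanishing term: given $(j,\sigma)$, the $P$-momentum condition forces $k$ to take a single value in $\mathcal{N}_K$ (since $\mathcal{N}_K$ is a set of representatives mod $K$). Applying the definition of $\Vert\cdot\Vert_{\mathscr{H}}$ to each factor gives
\begin{equation*}
|\tilde N_{(j,j')}^{(\sigma,\sigma')}|\;\langle k\rangle\;\prod_{i=1}^{r+1}\langle j_i\rangle^{1/2}\prod_{i=1}^{r'+1}\langle j'_i\rangle^{1/2}\leq 4(r+2)(r'+2)\,\Vert P\Vert_{\mathscr{H}}\Vert\chi\Vert_{\mathscr{H}},
\end{equation*}
and since $\langle k\rangle\geq 1$ we drop this factor to obtain the desired bound on $\Vert\tilde N\Vert_{\mathscr{H}}$. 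Symmetrization preserves this bound because the weight $\prod\langle\cdot\rangle^{1/2}$ is permutation-invariant and $|N_J^\Sigma|$ is at most a convex combination of the $|\tilde N_{J\circ\phi}^{\Sigma\circ\phi}|$. No real obstacle arises; the main subtle point is just to notice that the momentum condition collapses the sum over $k$ to a single term, so that no additional combinatorial factor enters the norm estimate.
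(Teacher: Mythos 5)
Your proposal is correct and follows essentially the same route as the paper's proof: expand $\{P,\chi\}$ via the formula \eqref{eq:nice_formula}, read off the unsymmetrized coefficient $\tilde N$ (the paper calls it $M$), symmetrize, verify the momentum and reality conditions, and then use the fact that the zero momentum conditions on $P$ and $\chi$ force the sum over $k$ to have at most one nonzero term to obtain the norm bound $4(r+2)(r'+2)\Vert P\Vert_{\mathscr{H}}\Vert\chi\Vert_{\mathscr{H}}$. If anything, your argument is slightly more careful than the paper's at the end, since you explicitly note that symmetrization cannot increase the sup-norm because the weight $\prod\langle j_i\rangle^{1/2}$ is permutation-invariant, a step the paper passes over silently.
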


\begin{proof}
Let $u\in \mathbb{C}^{\mathcal{N}_K}.$ We express the Hamiltonians as
\begin{align*}
P(u)&=\sum_{\substack{ j \in \mathcal{N}^{r+2}_K \\ \sigma \in \{-1,1\}^{r+2} \\ \mathcal{M}(j,\sigma) \equiv 0(K)}} P_j^{\sigma}\hspace{0.05cm} u_{j_1}^{\sigma_1}\cdots u_{j_{r+2}}^{\sigma_{r+2}} \quad \text{and}\quad \chi(u)=\sum_{\substack{ j' \in \mathcal{N}^{r'+2}_K \\ \sigma ' \in \{-1,1\}^{r'+2} \\ \mathcal{M}(j',\sigma ') \equiv 0(K)}} \chi_{j'}^{\sigma '}\hspace{0.05cm} u_{j_1'}^{\sigma_1'}\cdots u_{j_{r'+2}'}^{\sigma_{r'+2}'}.\end{align*}
By Definition \ref{ep1} and the symmetry condition satisfied by the coefficients of $P$ and $\chi$
\begin{align*} 
 & \{P,\chi\}(u)\\
 &= 2i\sum_{k \in \mathcal{N}_K} (\partial_{\overline{u_k}}P(u) \partial_{u_k}\chi(u) - \partial_{u_k}P(u) \partial_{\overline{u _k}}\chi(u)) \\
&= \sum_{\substack{ (j,j') \in \mathcal{N}^{r+r'+2}_K \\ (\sigma,\sigma') \in \{-1,1\}^{r+r'+2} }} 2i(r+2)(r'+2) \sum\limits_{k\in \mathcal{N}_K}\left( P_{j,k}^{\sigma,-1} \chi_{j',k}^{\sigma',1} - P_{j,k}^{\sigma,1} \chi_{j',k}^{\sigma',-1}\right) u_{j_1}^{\sigma_1}\cdots u_{j_{r+1}}^{\sigma_{r+1}} u_{j_1'}^{\sigma_1'}\cdots u_{j_{r'+1}'}^{\sigma_{r'+1}'}.
\end{align*}
We set 
\[M_{j''}^{\sigma''}:=  2i(r+2)(r'+2) \sum\limits_{k\in \mathcal{N}_K}\left( P_{j,k}^{\sigma,-1} \chi_{j',k}^{\sigma',1} - P_{j,k}^{\sigma,1} \chi_{j',k}^{\sigma',-1}\right)\quad\text{ and }\quad N_{j''}^{\sigma''}= \frac{1}{(r''+2)!} \sum_{\phi \in \mathscr{S}_{r''+2}}M_{j''\circ \phi}^{\sigma''\circ \phi}\]
where $j'':=(j,j')$, $\sigma'':=(\sigma,\sigma')$ and $r'':=r+r'$. Then we get
\begin{align*}
    \{P,\chi\}(u) &= \sum_{\substack{ j'' \in \mathcal{N}^{r''+2}_K \\ \sigma'' \in \{-1,1\}^{r''+2}}}N_{j''}^{\sigma''} u_{j_1''}^{\sigma_1''}\cdots u_{j_{r''+2}''}^{\sigma_{r''+2}''}=N(u).
\end{align*}
Note that we can interchange the order of summation since we are dealing with finite sums.
 Moreover, we can obviously see that $N(u)$ defines a homogeneous polynomial of degree $r''+2$ (i.e. $N \in \mathscr{H}^{r''+2}$ where both the symmetry and reality conditions of $N_{j''}^{\sigma''}$ are a direct consequence of those satisfied by $P_{j}^{\sigma}$ and $\chi_{j'}^{\sigma'}$). 
  We have to check the zero momentum condition \eqref{eq:zero_mom}. Indeed, if $M_{j''}^{\sigma''}\neq 0$ then there exists $k\in \mathcal{N}_K$ such that $P_{j,k}^{\sigma,-1} \chi_{j',k}^{\sigma',1} \neq 0$ or $P_{j,k}^{\sigma,1} \chi_{j',k}^{\sigma',-1} \neq 0$. As a consequence since $P$ and $\chi$ satisfy the zero momentum condition, there exists $\ell = \pm k$ such that
\[  j_1\sigma_1+\cdots + j_{r+1}\sigma_{r+1}-\ell    \equiv 0(K)  \quad \text{and} \quad  j_1'\sigma_1'+\cdots + j_{r'+1}'\sigma_{r'+1}'+\ell  \equiv 0(K), \]
and so
$$
 j_1\sigma_1+\cdots + j_{r+1}\sigma_{r+1} + j_1'\sigma_1'+\cdots + j_{r'+1}'\sigma_{r'+1}'  \equiv 0(K).
$$
Then, we note that thanks to the zero momentum condition \eqref{eq:zero_mom}, there exists at most one $k\in \mathcal{N}_K$ such that $P_{j,k}^{\sigma,-1} \chi_{j',k}^{\sigma',1} \neq 0$. It follows that 
 \begin{align*}
 {\Vert\{P,\chi\}\Vert}_{\mathscr{H}}&= \sup_{\substack{(j,j') \in \mathcal{N}^{r''+2}_K\\ (\sigma,\sigma')\in \{-1,1\}^{r''+2}}} \abs{M_{j''}^{\sigma''}} {\langle j_1 \rangle}^{1/2}\cdots {\langle j_{r+1} \rangle}^{1/2}  {\langle j_1' \rangle}^{1/2}\cdots {\langle j_{r'+1}' \rangle}^{1/2}\\
 &\leq 4(r+2)(r'+2) {\Vert P \Vert}_{\mathscr{H}}{\Vert \chi \Vert}_{\mathscr{H}}.
  \end{align*}
\end{proof}

\begin{lemma}\label{poissonquad}
Let $r\geq 0, s>0, P \in \mathscr{H}^{r+2}$ and $Z\in \mathscr{H}^2$ be a quadratic Hamiltonian of the form $Z(u)= \sum_{j \in \mathcal{N}_K}\lambda_j\abs{u_j}^2$ for some $\lambda \in \mathbb{R}^{\mathcal{N}_K}$. Then for all $j \in \mathcal{N}^{r+2}_K$ and all $\sigma \in \{-1,1\}^{r+2}$, we have
$$
\{P,Z\}_{j}^\sigma =-2i (\sigma_1\lambda_{j_1} + \cdots + \sigma_{r+2}\lambda_{j_{r+2}}) P_j^{\sigma}
$$
\end{lemma}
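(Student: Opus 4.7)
The plan is to compute $\{P,Z\}$ directly using the identity \eqref{eq:nice_formula} and match coefficients monomial by monomial. Since $Z(u) = \sum_{j\in \mathcal{N}_K} \lambda_j u_j \overline{u_j}$, its partial derivatives are immediate: $\partial_{u_k} Z = \lambda_k \overline{u_k}$ and $\partial_{\overline{u_k}} Z = \lambda_k u_k$. Substituting into \eqref{eq:nice_formula} yields
\begin{equation*}
\{P,Z\}(u) = 2i \sum_{k\in\mathcal{N}_K} \lambda_k \bigl( \overline{u_k}\, \partial_{\overline{u_k}} P(u) - u_k\, \partial_{u_k} P(u) \bigr).
\end{equation*}

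Next, I will expand $P$ in its monomial form $P(u) = \sum_{j,\sigma} P_j^\sigma u_{j_1}^{\sigma_1}\cdots u_{j_{r+2}}^{\sigma_{r+2}}$ and apply the operator $\overline{u_k}\,\partial_{\overline{u_k}} - u_k\,\partial_{u_k}$ to each monomial. The key combinatorial observation is the Euler-type identity: for a single factor $u_{j_i}^{\sigma_i}$, the operator $u_k\,\partial_{u_k}$ picks up the factor only when $\sigma_i = 1$ and $j_i = k$, while $\overline{u_k}\,\partial_{\overline{u_k}}$ picks it up only when $\sigma_i = -1$ and $j_i = k$. By the Leibniz rule applied to the product $u_{j_1}^{\sigma_1}\cdots u_{j_{r+2}}^{\sigma_{r+2}}$, one gets
\begin{equation*}
\bigl(\overline{u_k}\,\partial_{\overline{u_k}} - u_k\,\partial_{u_k}\bigr) \bigl( u_{j_1}^{\sigma_1}\cdots u_{j_{r+2}}^{\sigma_{r+2}} \bigr) = -\Bigl(\sum_{i=1}^{r+2} \sigma_i\, \mathds{1}_{j_i = k}\Bigr) u_{j_1}^{\sigma_1}\cdots u_{j_{r+2}}^{\sigma_{r+2}}.
\end{equation*}

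Multiplying by $\lambda_k$ and summing over $k \in \mathcal{N}_K$ then collapses the indicator, giving the factor $-(\sigma_1 \lambda_{j_1} + \cdots + \sigma_{r+2} \lambda_{j_{r+2}})$ in front of each monomial. Putting everything together and pulling the coefficient $2i$ through yields
\begin{equation*}
\{P,Z\}(u) = \sum_{j,\sigma} -2i\, (\sigma_1 \lambda_{j_1} + \cdots + \sigma_{r+2} \lambda_{j_{r+2}})\, P_j^\sigma \, u_{j_1}^{\sigma_1}\cdots u_{j_{r+2}}^{\sigma_{r+2}},
\end{equation*}
and identification of coefficients gives the claimed formula. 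There is no genuine obstacle here: the argument is purely algebraic, and I do not even need to invoke the symmetry, reality, or momentum conditions, since the right-hand side automatically inherits these from $P$ (the factor $\sigma_1 \lambda_{j_1} + \cdots + \sigma_{r+2} \lambda_{j_{r+2}}$ is already symmetric and changes sign under $\sigma \mapsto -\sigma$, compatible with the reality condition $\overline{P_j^{\sigma}} = P_j^{-\sigma}$). The only minor care needed is to justify that the formal identification of coefficients is legitimate, which follows from the uniqueness of the coefficients $(\{P,Z\})_j^\sigma$ guaranteed by the symmetry condition in Definition \ref{defi1}.
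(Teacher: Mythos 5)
Your proof is correct and takes essentially the same route as the paper: the paper's proof is the one-line remark that the formula follows from the Poisson bracket identity \eqref{eq:nice_formula} and the symmetry of the coefficients of $P$, and your computation is simply the explicit version of that observation, including the necessary invocation of uniqueness of symmetric coefficients to legitimize the final identification.
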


\begin{proof}
This is a direct consequence of the formula \eqref{eq:nice_formula} for the Poisson bracket and the symmetry of the coefficients of $P$.

 \end{proof}

\subsection{Flows} To perform the backward error analysis of the method and to put the modified equation in Birkhoff normal form, we are going to consider many auxiliary flows (in which the nonlinear part is always considered as perturbative). In the following proposition, we prove their existence and their basic estimates. 

\begin{proposition} \label{prop:flow} Let $r\geq 1$, $s\geq 1/2$, $C>0$, $\gamma \in (0,1)$, $\lambda \in \mathbb{R}^{\mathcal{N}_K}$, $Z : \mathbb{C}^{\mathcal{N}_K} \to \mathbb{R}$ be a quadratic Hamiltonian of the form
$$
Z(u) = \frac12 \sum_{j\in \mathcal{N}_K} \lambda_j |u_j|^2
$$
and $\chi = \chi_1+\cdots+\chi_r$ be a polynomial of degree smaller than or equal to $r+2$ such that for all $i\in \{1,\cdots,r\}$, $\chi_i\in \mathscr{H}^{i+2}$ satisfies $\| \chi_i\|_{\mathscr{H}} \leq C \gamma^{-i}$. Then there exists $\varepsilon_1 \gtrsim_{C,s,r} \gamma$ such that if $u\in \mathbb{C}^{\mathcal{N}_K}$ satisfies $\| u\|_{H^s} \leq 2\varepsilon_1$ then the flow of the equation
\begin{equation}
\label{eq:mon_equ_dev}
\left\{ \begin{array}{lll} i\partial_t v = \nabla (Z+\chi) (v) \\
v(0) = u
\end{array} \right.
\end{equation}
exists for $|t|\leq 1$ and satisfies for all $t\in [-1,1]$
\begin{equation}
\label{eq:proche_isom}
\| v(t) - \Phi_Z^t (u) \|_{H^s} \leq \left( \frac{\| u\|_{H^s}}{2\varepsilon_1} \right) \| u\|_{H^s}
\end{equation}
where the flow of $Z$ is an isometry on $H^s$ satisfying $(\Phi_Z^t u)_k = e^{-it\lambda_k} u_k$ for all $k\in \mathcal{N}_K$.
Moreover, denoting by $\Phi_{Z+\chi}$ the flow of \eqref{eq:mon_equ_dev}, we have, for all $t\in [-1,1]$, all $w\in \mathbb{C}^{\mathcal{N}_K}$ and all $u\in \mathbb{C}^{\mathcal{N}_K}$ satisfying $\| u\|_{H^s} \leq 2\varepsilon_1$
\begin{equation}
\label{eq:deriv_flow}
\| \mathrm{d}\Phi_{Z+\chi}^t (u)(w)\|_{H^s } \lesssim_{C,r,s} \|w\|_{H^s}.
\end{equation}
\end{proposition}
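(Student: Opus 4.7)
The plan is to work in the \emph{interaction picture} in order to treat $\chi$ perturbatively. Setting $w(t) := \Phi_Z^{-t}(v(t))$, and using that $\Phi_Z^t u = (e^{-it\lambda_k}u_k)_{k\in\mathcal{N}_K}$ is a $\mathbb{C}$-linear isometry on $H^s$ which commutes with $\nabla Z$ (since $\nabla Z$ is the diagonal multiplier $u_k\mapsto \lambda_k u_k$), a direct computation shows that $w$ must solve the non-autonomous equation
\begin{equation*}
\partial_t w = -i\,\Phi_Z^{-t}\bigl(\nabla\chi(\Phi_Z^t w)\bigr),\qquad w(0)=u.
\end{equation*}
Since this is an ODE on the finite-dimensional space $\mathbb{C}^{\mathcal{N}_K}$ with smooth vector field, local existence in time is immediate by Cauchy--Lipschitz; the whole content of the proposition lies in obtaining \emph{uniform} existence on $[-1,1]$ and quantitative bounds.

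The central estimate comes from combining Proposition~\ref{vectorfield} with the $H^s$-isometry property of $\Phi_Z^t$: for any $y\in\mathbb{C}^{\mathcal{N}_K}$,
\begin{equation*}
\|\Phi_Z^{-t}\nabla\chi(\Phi_Z^t y)\|_{H^s}\le \sum_{i=1}^r C_s C_0^{i}\|\chi_i\|_{\mathscr{H}}\|y\|_{H^s}^{i+1}\le C_s C\,\|y\|_{H^s}\sum_{i=1}^{r}\bigl(C_0\|y\|_{H^s}/\gamma\bigr)^{i}.
\end{equation*}
The key observation is that as soon as $\|y\|_{H^s}$ is small compared to $\gamma$, the geometric sum converges and the vector field is controlled by a multiple of $\|y\|_{H^s}^2/\gamma$. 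So I would run a standard bootstrap: assume $\|w(s)\|_{H^s}\le 2\|u\|_{H^s}$ on some interval, plug this into the estimate above, integrate in time, and deduce
\begin{equation*}
\|w(t)-u\|_{H^s}\le K_{C,s,r}\,\frac{\|u\|_{H^s}^2}{\gamma}\qquad (|t|\le 1).
\end{equation*}
Choosing $\varepsilon_1 = c_{C,s,r}\,\gamma$ with $c_{C,s,r}$ small enough ensures simultaneously that $\|u\|_{H^s}\le 2\varepsilon_1$ keeps $\|w\|_{H^s}\le 2\|u\|_{H^s}$ (closing the bootstrap and giving existence on $[-1,1]$) and that the right-hand side is bounded by $\|u\|_{H^s}^2/(2\varepsilon_1)$, which is exactly \eqref{eq:proche_isom} after observing that $\|v(t)-\Phi_Z^t u\|_{H^s}=\|\Phi_Z^t(w(t)-u)\|_{H^s}=\|w(t)-u\|_{H^s}$.

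For \eqref{eq:deriv_flow} I would apply the same interaction-picture trick to the tangent flow. Set $J(t):=\mathrm{d}\Phi_{Z+\chi}^t(u)$, which solves the linearized equation $\partial_t J = -i(\mathrm{d}\nabla Z + \mathrm{d}\nabla\chi(v(t)))J$, and put $L(t):=\Phi_Z^{-t}\circ J(t)$. Using again that $\mathrm{d}\nabla Z$ commutes with $\Phi_Z^t$, one finds $\partial_t L = -i\,\Phi_Z^{-t}\mathrm{d}\nabla\chi(v(t))\,\Phi_Z^t L$. Corollary~\ref{difference}, applied to each $\chi_i$ and resummed as the same geometric series (now with $\|v(t)\|_{H^s}\le 4\varepsilon_1\lesssim\gamma$), yields an $H^s$-operator bound on $\mathrm{d}\nabla\chi(v(t))$ of size $O_{C,s,r}(1)$. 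Gronwall's inequality then gives $\|L(t)w\|_{H^s}\lesssim_{C,r,s}\|w\|_{H^s}$ for $|t|\le 1$, and \eqref{eq:deriv_flow} follows since $\Phi_Z^t$ is an $H^s$-isometry.

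The only technical subtlety I anticipate is bookkeeping the dependence on the parameters $C,r,s,\gamma$ so as to get a threshold of the form $\varepsilon_1\gtrsim_{C,s,r}\gamma$ (and not something worse, like $\gamma^2$). This is ensured precisely by the fact that $\|\chi_i\|_{\mathscr{H}}\lesssim \gamma^{-i}$ exactly matches the polynomial degree $i{+}2$ of $\chi_i$, so that the series $\sum_i(C_0\|u\|_{H^s}/\gamma)^i$ is a \emph{geometric} series with constant ratio $C_0\|u\|_{H^s}/\gamma$; there is no loss of powers of $\gamma$ anywhere else in the argument.
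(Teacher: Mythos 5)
Your argument is correct and is essentially the same as the paper's: your interaction-picture change of unknown $w(t)=\Phi_Z^{-t}v(t)$ is just the integrated Duhamel formula $v(t)-\Phi_Z^t u=-i\int_0^t\Phi_Z^{t-\tau}\nabla\chi(v(\tau))\,\mathrm{d}\tau$ that the paper uses directly, and both proofs then combine the vector-field bound of Proposition~\ref{vectorfield} (resp. Corollary~\ref{difference}), the $H^s$-isometry of $\Phi_Z^t$, and a bootstrap (resp. Gr\"onwall) to close the estimates on $[-1,1]$. Your remark at the end about why $\|\chi_i\|_{\mathscr{H}}\lesssim\gamma^{-i}$ matching the degree $i+2$ yields a threshold $\varepsilon_1\gtrsim\gamma$ (and not a smaller power) is exactly the bookkeeping the paper encodes via its choice $\varepsilon_1=(9K_{r,s,C})^{-1}\gamma$.
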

\begin{proof}
By applying the Duhamel formula, we have
$$
v(t) - \Phi_Z^t (u) = \int_0^t \Phi_Z^{t-\tau} \nabla \chi(v(\tau)) \mathrm{d}\tau.
$$
Since $\Phi_Z^{t-\tau}$ is an isometry, applying the vector field estimate of Proposition \ref{vectorfield}, we have
\begin{equation}
\label{eq:bientotot}
\| v(t) - \Phi_Z^t (u)\|_{H^s} \leq K_{r,s,C} |t|\max_{\tau \in [0,t]} \max_{1\leq i \leq r} \gamma^{-i} \|v(\tau) \|_{H^s}^{i+1}.
\end{equation}
where $K_{r,s,C}>1$ is a constant depending only on $r,s$ and $C$. We set $\varepsilon_1 := (9K_{r,s,C})^{-1} \gamma$.  We note that if $\| u\|_{H^s} \leq 2\varepsilon_1$ and 
$$
\sup_{|\tau|\leq |t|}\| v(\tau) - \Phi_Z^t (u) \|_{H^s} \leq \left( \frac{\| u\|_{H^s}}{2\varepsilon_1} \right) \| u\|_{H^s}
$$
then $\|v(\tau)\|_{H^s} \leq 2 \| u\|_{H^s}$ and by \eqref{eq:bientotot},
$$
\| v(t) - \Phi_Z^t (u)\|_{H^s} \leq |t| \left( \frac{\| u\|_{H^s}}{3\varepsilon_1} \right) \| u\|_{H^s} < \left( \frac{\| u\|_{H^s}}{2\varepsilon_1} \right) \| u\|_{H^s}.
$$
whenever $|t|\leq 1$.
It follows by a basic bootstrap argument that $v(t)$ exists for $|t|\leq 1$ and satisfies \eqref{eq:proche_isom}.

\medskip

It only remains to prove \eqref{eq:deriv_flow}. First, by taking the derivative of the equation and recalling that $\nabla Z$ is linear, we have
$$
i\partial_tz(t) = \mathrm{d} \nabla (Z+\chi)(v(t))(z(t) ) = \nabla Z(z(t)  ) + A_t( z(t) ). 
$$
where we have set $z(t) := \mathrm{d}\Phi_{Z+\chi}^t (u)(w)$ and $A_t =  \mathrm{d} \nabla \chi(v(t))$. Therefore, setting $y(t) = \Phi_{Z}^{-t} z(t)$, we have
$$
i\partial_t y = B_t y \quad \mathrm{where} \quad B_t = \Phi_{Z}^{-t}  A_t \Phi_{Z}^{t} y. 
$$
Now, recalling that $\Phi_{Z}^{t}$ is an isometry on $H^s$ and the bound on $A_t$ given by Corollary \ref{difference}, we have
$$
\| B_t \|_{H^s \to H^s} \lesssim_{C,r,s}  \max_{1\leq i \leq r} \gamma^{-i} \|v(\tau) \|_{H^s}^{i} \lesssim_{C,r,s} 1.
$$
Therefore, we directly conclude the proof by applying the Gr\"onwall inequality.

\end{proof}

\section{Backward error analysis} \label{sec:back}

In this section, we fix $r\geq 0$, the mass $\rho>0$, $\delta\in(0,\pi)$ and we assume that the numerical parameters $h,K$ satisfy the CFL condition \eqref{eq:CFL}, i.e.
\begin{equation*}
(r+2) \, h\, \omega_{K/2} \leq 2\pi -\delta.
\end{equation*}
 We denote by $P=P_1+\cdots+P_r$ the Taylor expansion of $V$ (defined by \eqref{eq:def_V}) up to order $r+2$ as defined by equation \eqref{eq:def_Pn} in Lemma \ref{lem:def_Taylor_P}.

\subsection{Formal derivation of the system of cohomological equations} To perform the backward error analysis of the Lie splitting, we are going to have to solve a system of cohomological equations (see \eqref{eq:the_system}). For motivational purposes, in this subsection, we present a formal derivation\footnote{we won't be using it hereafter, so we do not try to be rigorous.} of this system. It is similar to the one done in Section 3 of \cite{MR2811583}. However, there are two main differences:
\begin{itemize}
\item contrary to \cite{MR2811583}, $V$ is not a homogeneous polynomial. Therefore powers of $h$ and $u$ do not match as well as in \cite{MR2811583} (i.e., concretely, the polynomials $t\mapsto B_i(t)$ defined below are not homogeneous in $t$).
\item we do not invert formal series of non diagonal operators acting on Hamiltonians\footnote{i.e. we do not invert the operator $\varphi(\mathrm{ad}_{B(t)})$ below.}. This significantly simplifies the justification of formal calculations. However, it makes our system slightly more implicit than the one in \cite{MR2811583} but this is not a real problem because it is still triangular.
\end{itemize}

\medskip

We are looking for a family of time dependent analytic Hamiltonians $B(t,u)= B_0(u) + \sum_{n\geq 1} B_n(t,u)$ where $B_0 = hT$ and, for $n\geq 1$, $B_n(t) \equiv B_n(t,\cdot) \in \mathscr{H}^{n+2}$ is a homogeneous  polynomial of degree $n+2$  satisfying $B_n(0)= 0$ and such that, provided that $|t|$ is small enough
\begin{equation}
\label{eq:pouette}
\Phi_{B(t)}^1 =  \Phi_V^t \circ  \Phi_{T}^h.
\end{equation}
We recall that $T$, defined by \eqref{eq:def_T_intro}, denotes the quadratic Hamiltonian corresponding to the linear part of \eqref{eq:KG_sd}. Moreover, to avoid possible confusion, we point out that $\Phi_{B(t)}^1$ does not denote the flow of a non-autonomous equation.

\medskip

We note that, by definition of $B_0$, since $B(0) = B_0$, \eqref{eq:pouette} holds if $t=0$. Thus, to get \eqref{eq:pouette}, it is necessary and sufficient that 
\begin{equation}
\label{eq:truite}
\partial_t \Phi_{B(t)}^1 =  \partial_t (\Phi_V^t \circ  \Phi_{T}^h).
\end{equation}
Then we use a classical formula, proven in Lemma \ref{lem:jolie_formule} of the Appendix, to get that
$$
\partial_t  \Phi_{B(t)}^1  = -i \Big(  \nabla \int_0^1 (\partial_{t} B(t)) \circ \Phi_{B(t)}^{-\tau} \mathrm{d}\tau \Big) \circ \Phi_{B(t)}^1.
$$
Moreover, by definition, we note that we have
$$
\partial_t (\Phi_V^t \circ  \Phi_{T}^h )= -i( \nabla V )\circ \Phi_V^t \circ  \Phi_{T}^h.
$$
Then, using that $\Phi_{B(t)}^1 =  \Phi_V^t \circ  \Phi_{T}^h$, we deduce that \eqref{eq:truite} rewrites
$$
\int_0^1 (\partial_{t} B(t)) \circ \Phi_{B(t)}^{-\tau} \mathrm{d}\tau = V.
$$
We recall that for any Hamiltonian $K,L$, we have
$$
K \circ \Phi_L^{-t} = e^{t\mathrm{ad}_{L}} K. 
$$
Note that this last relation simply comes from the formula $\partial_t K \circ \Phi_L^{-t} = \{ L, K\} \circ \Phi_L^{-t}  $.
Thus we deduce that 
$$
\varphi(\mathrm{ad}_{B(t)}) \partial_{t} B(t)  = V
$$
where $\varphi \in C^\infty(\mathbb{R};\mathbb{R})$ denotes the function defined by
$$
\varphi(X) := \frac{e^{X}-1}X. 
$$
Then, we use that by construction $B(0) = hT$ to deduce that
$$
\varphi(\mathrm{ad}_{hT}) \partial_{t} B(t)  = V - \big( \varphi(\mathrm{ad}_{B(t)})-\varphi(\mathrm{ad}_{B_0}) \big) \partial_{t} B(t).
$$
Finally, realizing the Taylor expansion in $u$ of these Hamiltonians and by grouping terms according to their degree of homogeneity, we get the following system of cohomological equations 
\begin{equation}
\label{eq:the_system_bis}
\forall n\geq 1, \quad \varphi(\mathrm{ad}_{hT}) \partial_t B_{n} = P_n -  \sum_{k\geq 0} \frac{1}{(k+1)!}  \sum_{m=1}^{n-1}    \sum_{0< i_1+\cdots + i_k = n-m }\mathrm{ad}_{B_{i_1}} \cdots \mathrm{ad}_{B_{i_k}} \partial_t B_{m} 
\end{equation}
where $V = \sum_{n\geq 1} P_n$ denotes the Taylor expansion of $V$ (see Lemma \ref{lem:def_Taylor_P}). 

\medskip

We note that the system \eqref{eq:the_system_bis} is triangular: the right hand side only depends on $B_1,\cdots,B_{n-1}$. So to solve it, it remains to invert $\varphi(\mathrm{ad}_{hT})$ (which will be done in Lemma \ref{lem:inv_phi} below).

\subsection{The cohomological equations}
\label{sub:coho}

We do not expect the solution of \eqref{eq:the_system_bis} to be convergent (i.e. that $\sum_{n\geq 0} B_n$ converges even if $V$ is polynomial). So we introduce a truncated version of \eqref{eq:the_system_bis}.
More precisely, we set $B_0 = hT$ and we consider the system of cohomological equations
\begin{equation}
\label{eq:the_system}
\varphi(\mathrm{ad}_{hT}) \partial_t B_{n} = P_n - \underbrace{ \sum_{k\geq 0} \frac{1}{(k+1)!} \overbrace{ \sum_{m=1}^{n-1}    \sum_{\substack{0< i_1+\cdots + i_k = n-m \\ 0\leq i_1,\cdots, i_k \leq r}}\mathrm{ad}_{B_{i_1}} \cdots \mathrm{ad}_{B_{i_k}} \partial_t B_{m} }^{:=K_{n,k}}}_{:=K_n}
\end{equation}
with initial condition $B_n(0) = 0$ and
where $n\in \{1,\cdots,r\}$, $t\in [0,h]$, $P=P_1+\cdots+P_r$ denotes the Taylor expansion of $V$ at order $r+2$, the unknowns $B_n$ are homogeneous polynomials of degree $n+2$ depending polynomially on $t$ (i.e. $t\mapsto B_n(t) \in \mathscr{H}^{n+2}$ is a polynomial).
We note that the system of cohomological equations \eqref{eq:the_system} is actually \emph{triangular} because  for $n\leq r$, $K_n$ only depends on $B_1,\cdots,B_{n-1}$.

First, to ensure that the system \eqref{eq:the_system} makes sense, we have to prove that, the polynomials $B_1,\cdots,B_r$ being given, the sum defining $K_n$ converges and defines a polynomial with respect to $u$ and $t$.
\begin{lemma} \label{lem:on_estime_kn}
Assume that for all $i\in \{1,\cdots,r\}$, $t\mapsto B_i(t) \in \mathscr{H}^{i+2}$ is a polynomial of degree smaller than or equal to $i$ vanishing in $t=0$. Then for all $n\geq 1$, the sum defining $K_n$ in \eqref{eq:the_system} converges\footnote{in the space of polynomials of $t$ of degree $\leq n$ and taking values in $\mathscr{H}^{n+2}$} and defines a polynomial $t\mapsto K_n(t) \in \mathscr{H}^{n+2}$ of degree  smaller than or equal to $n$. Moreover, for all $n\geq 1$ and $k\geq 0$ we have the estimates
\begin{equation}
\label{eq:lestime_quon_veut_presque}
\| K_{n,k}(t) \|_{L^\infty(0,h;\mathscr{H}^{n+2})}  \lesssim  t\, C^{ k+n} (n+2)^{k+1} (r+2)^k \big( 1+\sup_{1\leq a \leq\min(n-1,r)}  {\Vert  \partial_t B_{a} \Vert}_{L^\infty(0,h;\mathscr{H}^{a+2})} \big)^n 
\end{equation}
and
\begin{equation}
\label{eq:lestime_quon_veut}
\| K_{n}(t) \|_{L^\infty(0,h;\mathscr{H}^{n+2})}  \lesssim  t\, C^{nr}  \big( 1+\sup_{1\leq a \leq\min(n-1,r)}  {\Vert  \partial_t B_{a} \Vert}_{L^\infty(0,h;\mathscr{H}^{a+2})} \big)^n 
\end{equation}
where $C>0$ is a numerical constant.
\end{lemma}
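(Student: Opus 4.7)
\medskip

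\noindent\textbf{Proof plan.} The strategy is to bound each iterated Poisson bracket $\mathrm{ad}_{B_{i_1}}\cdots\mathrm{ad}_{B_{i_k}}\partial_t B_m$ appearing in $K_{n,k}$ via two complementary estimates: Proposition \ref{1209} for the positions with $i_j\ge 1$, and Lemma \ref{poissonquad} combined with the CFL condition \eqref{eq:CFL} for the positions with $i_j=0$. The latter is unavoidable since $B_0=hT$ has infinite $\mathscr{H}$-norm, so Proposition \ref{1209} cannot be applied to $B_0$ directly.

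I would first verify that each such iterated bracket, with $m+i_1+\cdots+i_k=n$, defines an element of $\mathscr{H}^{n+2}$ depending polynomially on $t$ of degree at most $n-1$. The homogeneity degree in $u$ accumulates additively, since both Lemma \ref{poissonquad} (for $i_j=0$, preserving the degree) and Proposition \ref{1209} (for $i_j\ge 1$, raising it by $i_j$) produce objects in the expected space. The $t$-degree follows from $\deg_t B_{i_j}\le i_j$ and $\deg_t\partial_t B_m\le m-1$, which sum to $n-1$.

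For the quantitative bound, iterating the two estimates gives a factor $\le 4(i_j+2)(n+2)\|B_{i_j}(t)\|_{\mathscr{H}}$ at each $i_j\ge 1$ position (by Proposition \ref{1209}, using that intermediate $\mathscr{H}$-degrees are at most $n+2$), and a factor $\le h(n+2)\omega_{K/2}\le 2\pi-\delta$ at each $i_j=0$ position (by Lemma \ref{poissonquad}, \eqref{eq:CFL}, and $n\le r$). Since $B_{i_j}(0)=0$, integration yields $\|B_{i_j}(t)\|_{\mathscr{H}}\le tM$ where $M:=\sup_{1\le a\le\min(n-1,r)}\|\partial_t B_a\|_{L^\infty(0,h;\mathscr{H}^{a+2})}$. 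Denoting by $a$ the number of positions with $i_j\ge 1$ (which satisfies $a\le\sum i_j\le n-1$), the pointwise bound becomes
\[
\|\mathrm{ad}_{B_{i_1}}\cdots\mathrm{ad}_{B_{i_k}}\partial_t B_m(t)\|_{\mathscr{H}}\lesssim t^a\cdot(2\pi)^{k-a}\cdot\bigl[4(r+2)(n+2)\bigr]^a\cdot M^{a+1},
\]
with $M^{a+1}\le(1+M)^n$. To reduce $t^a$ to a single factor $t$, I would write $t^a\le t\cdot h^{a-1}$ and use $h\omega_{K/2}\le 2\pi$ together with $\omega_{K/2}\ge 1$ (valid for $K\ge 2$; smaller $K$ being trivial) to absorb $h^{a-1}\le(2\pi)^{a-1}$ into the factor $C^{k+n}$. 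Counting the at most $n(r+1)^k$ admissible tuples $(m,i_1,\ldots,i_k)$ then yields \eqref{eq:lestime_quon_veut_presque}.

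Finally, \eqref{eq:lestime_quon_veut} follows by summing over $k\ge 0$ with weight $1/(k+1)!$, which tames the exponential-in-$k$ prefactor:
\[
\sum_{k\ge 0}\frac{C^{k+n}(n+2)^{k+1}(r+2)^k}{(k+1)!}\lesssim C^n(n+2)\exp\bigl(C(n+2)(r+2)\bigr)\lesssim C^{nr},
\]
after possibly enlarging $C$ and using $n\le r$. The same absolute summability shows that the series defining $K_n$ converges coefficientwise in the finite-dimensional space of $\mathscr{H}^{n+2}$-valued polynomials in $t$ of degree $\le n-1$, so $K_n$ inherits the same structure. The main technical obstacle is the asymmetric treatment of the quadratic $B_0=hT$ through the CFL condition, together with the book-keeping needed to extract a single power of $t$ from products of polynomials each vanishing at the origin.
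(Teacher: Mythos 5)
Your strategy matches the paper's: treat the iterated brackets by splitting the $i_j=0$ positions (where $B_0=hT$ has infinite $\mathscr H$-norm, forcing Lemma \ref{poissonquad} and the CFL condition) from the $i_j\geq 1$ positions (where Proposition \ref{1209} applies), track polynomial degrees in $t$ and homogeneity degrees in $u$, and extract a factor $t$ from the vanishing of the $B_i$ at $t=0$. This is exactly the paper's proof skeleton. However, the final counting step as you wrote it does not close the argument.

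The difficulty is in the sentence ``Counting the at most $n(r+1)^k$ admissible tuples $(m,i_1,\ldots,i_k)$ then yields \eqref{eq:lestime_quon_veut_presque}.'' Your pointwise bound carries a factor $\big[4(r+2)(n+2)\big]^a$ with $a$ up to $\min(k,n-1)$, hence up to $(r+2)^k$ in the regime $k\leq n-1$. Multiplying by a count of size $(r+1)^k$ then produces a factor of order $\big[(r+1)(r+2)\big]^k$, which cannot be absorbed into $C^{k+n}(n+2)^{k+1}(r+2)^k$ with $C$ a \emph{numerical} constant. The bound $(r+1)^k$ is a vast over-count: it ignores the constraint $i_1+\cdots+i_k=n-m$, which forces the admissible tuples to number only $\binom{n-m+k-1}{k-1}$ (polynomial in $k$ of degree $\leq n-1$), and in fact at most $a\leq n-1$ of the $i_j$ can be nonzero. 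The paper avoids the issue altogether by keeping the weighted sum $\sum\|B_{i_1}\|_{\widetilde{\mathscr H}}\cdots\|B_{i_k}\|_{\widetilde{\mathscr H}}$: it first pulls out the uniform factor $4^k(n+2)^k(r+2)^k$ from the Poisson-bracket estimates, then sums over the number $p$ of nonzero indices with weight $\binom{k}{p}\|B_0\|_{\widetilde{\mathscr H}}^{k-p}$ (the binomial formula gives $(1+\|B_0\|_{\widetilde{\mathscr H}})^k\leq(1+\pi)^k$), and controls the nonzero tuples by stars-and-bars, yielding a factor $\leq 2^{2n}$ independent of $k$. To rescue your version, replace the crude count by the stars-and-bars count $\binom{n-m+k-1}{k-1}$ partitioned over the number $a$ of nonzero positions.

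Two smaller points. First, the bound ``$\leq 2\pi-\delta$'' for each $i_j=0$ bracket uses $n\leq r$, but the lemma is stated for all $n\geq 1$ and is actually applied in Proposition \ref{prop:rewrite} and Theorem \ref{main} with $n>r$; for $n>r$ you should keep the $(n+2)$ factor (which is allowed by $(n+2)^{k+1}$ in the statement). Second, your extraction of a single $t$ from $t^a$ uses $t\leq h$ and $h\omega_{K/2}\leq 2\pi$ together with $\omega_{K/2}\geq 1$; the paper's version is simpler, keeping $\|B_{i_1}(t)\|_{\mathscr H}\leq t\|\partial_t B_{i_1}\|_{L^\infty}$ for one factor and $\|B_{i_j}(t)\|_{\mathscr H}\leq\|\partial_t B_{i_j}\|_{L^\infty}$ for the others, which does not require any auxiliary bound on $h$.
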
\begin{proof} First, we note that thanks to Proposition \ref{1209}, the finite sum $K_{n,k}$
defines a polynomial $t\mapsto K_{n,k}(t) \in \mathscr{H}^{n+2}$ of degree $n+2$ with respect to $u$ and of degree smaller than or equal to $n$ with respect to $t$. Such polynomials live in a finite dimensional space, so it suffices to prove the absolute convergence of the series defining $K_n$ in $L^\infty(0,h;\mathscr{H}^{n+2})$ to ensure that $t\mapsto K_n(t) \in \mathscr{H}^{n+2}$ is a well-defined polynomial of degree  smaller than or equal to $n$.

We fix $t\in [0,h]$. We set $\| B_0 \|_{\widetilde{\mathscr{H}}} := 2^{-1} h\omega_{K/2}$ and $\| B_i(t) \|_{\widetilde{\mathscr{H}}} := \| B_i(t) \|_{\mathscr{H}}$ if $i\geq 1$. Omitting many $t$ to lighten the notations and applying Proposition \ref{1209} and Lemma \ref{poissonquad} to estimate the Poisson brackets, we have
\begin{equation*}
\begin{split}
\|K_{n,k} \|_{\mathscr{H}}
\leq&  \sum_{m=1}^{n-1}4^k(n+2)^{k}(r+2)^k    \sum_{\substack{0< i_1+\cdots + i_k = n-m \\ 0\leq i_1,\cdots, i_k \leq r}} {\Vert  B_{i_1} \Vert}_{\widetilde{\mathscr{H}}}  \cdots {\Vert  B_{i_k} \Vert}_{\widetilde{\mathscr{H}}}     {\Vert \partial_t B_{m} \Vert}_{\mathscr{H}} \\
\leq & \sum_{m=1}^{n-1} 4^k(n+2)^{k} (r+2)^k \sum_{p=1}^{k} \frac{k!}{p!(k-p)!} \| B_0 \|_{\widetilde{\mathscr{H}}}^{k-p} \\
&\times  \sum_{\substack{0< i_1+\cdots + i_p = n-m \\ 1\leq i_1,\cdots, i_p \leq r}} {\Vert  B_{i_1} \Vert}_{\mathscr{H}} \cdots {\Vert  B_{i_p} \Vert}_{\mathscr{H}}    {\Vert \partial_t B_{m} \Vert}_{\mathscr{H}}.
\end{split}
\end{equation*}
Then observing that, since $1\leq i_1,\cdots,i_p$, we have $p\leq n-m\leq n$ and since the polynomials $B_i$ vanish in $t=0$, they satisfy 
$$
\| B_i(t) \|_{\mathscr{H}} \leq t \| \partial_t B_i \|_{L^\infty(0,h;\mathscr{H}^{i+2})} \leq \| \partial_t B_i \|_{L^\infty(0,h;\mathscr{H}^{i+2})} ,
$$ 
we deduce (using stars and bars combinatorics) that $ \|K_{n,k} \|_{\mathscr{H}}$ is bounded by
$$
 \sum_{m=1}^{n-1} 4^k(n+2)^{k} (r+2)^k  \sum_{p=1}^{k} \frac{k! \, t}{p!(k-p)!} \| B_0 \|_{\widetilde{\mathscr{H}}}^{k-p}  2^{2n} \big( 1+\sup_{1\leq a \leq\min(n-1,r)}  {\Vert  \partial_t B_{a} \Vert}_{L^\infty(0,h;\mathscr{H}^{a+2})} \big)^n.
$$
Then using the binomial formula and that, thanks to the CFL condition \eqref{eq:CFL}, we have that $\| B_0 \|_{\widetilde{\mathscr{H}}}\leq \pi$, we get, as expected, a numerical constant $C>0$ such that 
$$
 \|K_{n,k} \|_{\mathscr{H}}   \lesssim t\, C^{k+n} (n+2)^{k+1} (r+2)^k \big( 1+\sup_{1\leq a \leq\min(n-1,r)}  {\Vert  \partial_t B_{a} \Vert}_{L^\infty(0,h;\mathscr{H}^{a+2})} \big)^n.
$$
By definition of the exponential, the estimate \eqref{eq:lestime_quon_veut} follows directly.
\end{proof}

Then, we have to invert the operator $\varphi(\mathrm{ad}_{hT})$ uniformly with respect to $h$ and $K$.
\begin{lemma} \label{lem:inv_phi}
For all $n\leq r$, the operator $\varphi(\mathrm{ad}_{hT}) : \mathscr{H}^{n+2} \to \mathscr{H}^{n+2}$ is invertible and for all $Q\in \mathscr{H}^{n+2}$, we have the bound
\begin{equation}
\label{eq:bound_inv}
\| (\varphi(\mathrm{ad}_{hT}))^{-1} Q \|_{\mathscr{H}} \lesssim_{\delta} 1.
\end{equation}
\end{lemma}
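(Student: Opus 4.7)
The plan is to observe that $\mathrm{ad}_{hT}$ is diagonal in the monomial basis of $\mathscr{H}^{n+2}$, so that by functional calculus $\varphi(\mathrm{ad}_{hT})$ is diagonal as well. The bound then reduces to a scalar lower estimate of $\varphi$ on a compact region of the imaginary axis, compactness being precisely what the CFL condition \eqref{eq:CFL} buys.

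Concretely, I would first apply Lemma \ref{poissonquad} (together with the antisymmetry of the Poisson bracket) to the quadratic Hamiltonian $T = \frac{1}{2}\sum_k \omega_k |u_k|^2$, to get
$$(\mathrm{ad}_{hT}\, Q)_j^\sigma \;=\; ih\,\Omega(j,\sigma)\, Q_j^\sigma, \qquad \Omega(j,\sigma) := \sigma_1\omega_{j_1}+\cdots+\sigma_{n+2}\omega_{j_{n+2}},$$
for every $Q\in\mathscr{H}^{n+2}$. Iterating and using that $\varphi(X)=\sum_{k\geq 0} X^k/(k+1)!$ is entire, the operator $\varphi(\mathrm{ad}_{hT})$ acts on coefficients by the scalar multiplier $\varphi(ih\Omega(j,\sigma))$; its inverse, if it exists, is given by the same diagonal formula with $\varphi(ih\Omega(j,\sigma))^{-1}$.

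Next I would use the CFL condition and the bound $|\omega_{j_i}|\leq \omega_{K/2}$ for $j_i\in\mathcal{N}_K$ to get $|h\,\Omega(j,\sigma)|\leq (n+2)\,h\,\omega_{K/2}\leq (r+2)\,h\,\omega_{K/2}\leq 2\pi-\delta$. On the closed interval $[-(2\pi-\delta),2\pi-\delta]$ the continuous function $y\mapsto \varphi(iy)=2\sin(y/2)/y$ (extended by $\varphi(0)=1$) has no zero, since its zeros on the imaginary axis are exactly $y\in 2\pi\mathbb{Z}^*$. Hence there is a constant $c_\delta>0$ depending only on $\delta$ with $|\varphi(iy)|\geq c_\delta$ for $|y|\leq 2\pi-\delta$. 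Reading off the definition of the $\mathscr{H}$-norm,
$$\|(\varphi(\mathrm{ad}_{hT}))^{-1}Q\|_{\mathscr{H}} = \sup_{j,\sigma} \frac{|Q_j^\sigma|}{|\varphi(ih\Omega(j,\sigma))|}\,\langle j_1\rangle^{1/2}\cdots\langle j_{n+2}\rangle^{1/2} \leq c_\delta^{-1}\,\|Q\|_{\mathscr{H}},$$
which is the target bound \eqref{eq:bound_inv}.

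The only minor verification remaining is that the coefficient-wise inverse indeed lands in $\mathscr{H}^{n+2}$: the momentum condition is untouched because the multiplier depends only on $(j,\sigma)$; the symmetry condition is preserved because $\Omega(j,\sigma)$ is itself symmetric under joint permutations of $(j_i,\sigma_i)$; and the reality condition follows from $\Omega(j,-\sigma)=-\Omega(j,\sigma)$ together with $\overline{\varphi(ih\Omega)}=\varphi(-ih\Omega)$, the latter holding because $\varphi$ has real Taylor coefficients. There is no substantive obstacle: the CFL window $2\pi-\delta$ is sharp exactly because $\varphi(\pm 2\pi i)=0$, and the whole point of the lemma is that $c_\delta$ does not depend on $h$ or $K$.
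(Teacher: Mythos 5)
Your proof follows exactly the paper's argument: apply Lemma \ref{poissonquad} to see that $\mathrm{ad}_{hT}$ acts diagonally as multiplication by $ih\Omega_{j,\sigma}$ on the coefficients, hence $\varphi(\mathrm{ad}_{hT})$ is multiplication by $\varphi(ih\Omega_{j,\sigma})$, and the CFL condition \eqref{eq:CFL} confines $h\Omega_{j,\sigma}$ to $[-(2\pi-\delta),\,2\pi-\delta]$, on which $|\varphi(i\,\cdot)|$ is bounded below by a constant depending only on $\delta$. One micro-slip: $\varphi(iy)=e^{iy/2}\cdot 2\sin(y/2)/y$, not $2\sin(y/2)/y$, but only the modulus $|\varphi(iy)|=2|\sin(y/2)|/|y|$ is used so the conclusion is unaffected; your closing verification that the diagonal inverse respects the momentum, symmetry and reality conditions of $\mathscr{H}^{n+2}$ is a worthwhile detail the paper leaves implicit.
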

\begin{proof}
By Lemma \ref{poissonquad}, we know that
$$
(\varphi(\mathrm{ad}_{hT}) Q)_{j}^\sigma = \varphi(-i \Omega_{j,\sigma}) Q_{j}^\sigma
$$
where  $\Omega_{j,\sigma} = \sigma_1\omega_{j_1} + \cdots +\sigma_{n+2}\omega_{j_{n+2}} $. Observing that thanks to the CFL condition \eqref{eq:CFL}, 
$$
|\varphi(-i \Omega_{j,\sigma})| \gtrsim_{\delta} 1,
$$
we deduce directly that $\varphi(\mathrm{ad}_{hT}) $ is invertible and satisfies \eqref{eq:bound_inv}.
\end{proof}

Finally, since the system \eqref{eq:the_system} is triangular, we deduce the following proposition of the two previous lemmas and Lemma \ref{lem:def_Taylor_P} for the uniform bounds on $P$.
\begin{proposition} \label{prop:sol_coho} There exists a unique sequence $B_1,\cdots,B_r$ of polynomials solving the system \eqref{eq:the_system} such that for all $i\in \{1,\cdots,r\}$, $t\mapsto B_i(t) \in \mathscr{H}^{i+2}$ is a polynomial of degree smaller than or equal to $i$ vanishing in $t=0$. Moreover, we have the bound (uniform with respect to the numerical parameters)
$$
\sup_{1\leq i \leq r}  {\Vert  \partial_t B_{i} \Vert}_{L^\infty(0,h;\mathscr{H}^{i+2})} \lesssim_{g,\delta,r,\rho} 1.
$$
\end{proposition}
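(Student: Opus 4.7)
The plan is to proceed by induction on $n\in\{1,\ldots,r\}$, exploiting the triangular structure of the system \eqref{eq:the_system}: since the summation defining $K_n$ only involves $B_1,\ldots,B_{n-1}$, at each step the equation reduces to an explicit inversion of $\varphi(\mathrm{ad}_{hT})$. The three preparatory lemmas (\ref{lem:def_Taylor_P}, \ref{lem:on_estime_kn}, \ref{lem:inv_phi}) give exactly the three ingredients needed: a uniform bound on the source term $P_n$, a uniform bound on the nonlinear coupling $K_n$, and the uniform invertibility of $\varphi(\mathrm{ad}_{hT})$ on $\mathscr{H}^{n+2}$.

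For the base case $n=1$, the inner sum defining $K_1$ runs over $m\in\{1,\ldots,0\}$ and is therefore empty, giving $K_1\equiv 0$. Lemma \ref{lem:inv_phi} then lets me set $\partial_tB_1:=\varphi(\mathrm{ad}_{hT})^{-1}P_1\in\mathscr{H}^3$ (independent of $t$), and integrate with $B_1(0)=0$ to obtain $B_1(t)=t\,\partial_tB_1$, a polynomial of degree $1$ in $t$. Lemmas \ref{lem:inv_phi} and \ref{lem:def_Taylor_P} immediately give $\|\partial_tB_1\|_{L^\infty(0,h;\mathscr{H}^3)}\lesssim_{g,\delta,\rho}1$.

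For the inductive step, assuming $B_1,\ldots,B_{n-1}$ have been constructed with the stated properties, Lemma \ref{lem:on_estime_kn} ensures $K_n$ is a well-defined polynomial in $t$ valued in $\mathscr{H}^{n+2}$. A careful degree count — using that each $B_{i_j}(t)$ vanishes at $t=0$ and has degree $\leq i_j$, while $\partial_tB_m$ has degree $\leq m-1$ — shows each term in $K_{n,k}$ has degree $\leq (i_1+\cdots+i_k)+(m-1)=n-1$, sharper than the bound $n$ recorded in Lemma \ref{lem:on_estime_kn}. I then invert via Lemma \ref{lem:inv_phi} to define $\partial_tB_n:=\varphi(\mathrm{ad}_{hT})^{-1}(P_n-K_n)\in\mathscr{H}^{n+2}$, a polynomial of degree $\leq n-1$ in $t$, and integrate with $B_n(0)=0$ to get $B_n(t)$, a polynomial of degree $\leq n$ vanishing at $0$. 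Uniqueness is immediate at each level, since $\partial_tB_n$ is uniquely determined by the equation.

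For the uniform bound, set $M_{n-1}:=\sup_{1\leq a\leq n-1}\|\partial_tB_a\|_{L^\infty(0,h;\mathscr{H}^{a+2})}$. Combining estimate \eqref{eq:lestime_quon_veut} with \eqref{eq:bound_inv} and the bound $\|P_n\|_\mathscr{H}\lesssim_{n,\rho}1$ from Lemma \ref{lem:def_Taylor_P} yields
$$
\|\partial_tB_n\|_{L^\infty(0,h;\mathscr{H}^{n+2})}\lesssim_{\delta}\|P_n\|_\mathscr{H}+h\,C^{nr}\bigl(1+M_{n-1}\bigr)^n\lesssim_{g,\delta,n,\rho}1+\bigl(1+M_{n-1}\bigr)^n,
$$
where I used that the CFL condition \eqref{eq:CFL} and $\omega_{K/2}\geq\sqrt\rho$ force $h\leq 2\pi/\sqrt\rho$. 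Iterating this recursion from $n=1$ up to $n=r$ gives $M_r\lesssim_{g,\delta,r,\rho}1$, which is the desired conclusion. I expect the only subtle point to be the tightening of the degree count from $n$ (as stated in Lemma \ref{lem:on_estime_kn}) to $n-1$, which is necessary to close the induction on the degree bound $\leq i$; all the rest reduces to straightforward bookkeeping using the three lemmas.
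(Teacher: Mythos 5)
Your proposal is correct and follows essentially the same route as the paper, which leaves this result as an immediate consequence of Lemmas \ref{lem:def_Taylor_P}, \ref{lem:on_estime_kn} and \ref{lem:inv_phi} together with the triangular structure of \eqref{eq:the_system}; you have simply spelled out the induction. Your observation about tightening the $t$-degree bound on $K_n$ from $\leq n$ (as stated in Lemma \ref{lem:on_estime_kn}) to $\leq n-1$ — using that $\partial_t B_m$ has degree $\leq m-1$ and $i_1+\cdots+i_k=n-m$ — is a correct and genuinely necessary detail that the paper glosses over, since without it the induction would only give $B_n$ of degree $\leq n+1$.
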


\subsection{The modified Hamiltonian}

We define the modified Hamiltonian $H_h : \mathbb{C}^{\mathcal{N}_K} \to \mathbb{R}$ by
\begin{equation}
\label{eq:def_H_h}
H_h := h^{-1}B(h) \quad \mathrm{where} \quad B = B_0 + \cdots + B_r,
\end{equation}
the polynomials $B_n$ being those given by Proposition \ref{prop:sol_coho} solving the system of cohomological equations \eqref{eq:the_system}.
 We aim at proving the following theorem.
\begin{theorem}\label{main}
For all $s\geq 1/2$, there exists $\varepsilon_0 \gtrsim_{r,s,\rho,\delta} 1$,  such that for all $u\in \mathbb{C}^{\mathcal{N}_K}$ satisfying $\varepsilon := {\Vert u \Vert}_{H^s} \leq  \varepsilon_0$, we have 
\begin{equation}
\label{eq:backward}
{\Vert (\Phi_{P}^h \circ \Phi_{T}^h - \Phi_{H_{h}}^h)(u) \Vert}_{H^s} \lesssim_{r,s,\rho,\delta}  h^2 \varepsilon^{r+2}.
\end{equation}
\end{theorem}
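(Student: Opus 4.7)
The plan is to compare $\Phi_{H_h}^h(u)=\Phi_{B(h)}^1(u)$ and $\Phi_P^h\circ\Phi_T^h(u)$ via a parametric interpolation in $t\in[0,h]$ followed by a Gronwall argument. Given $u$ with $\|u\|_{H^s}\leq\varepsilon$, I would introduce
\[ \alpha(t) := \Phi_{B(t)}^1(u), \qquad \beta(t) := \Phi_P^{t}\circ\Phi_T^h(u), \qquad t\in[0,h], \]
and first use Proposition \ref{prop:flow} (with $Z=hT$, $\chi_i=B_i(t)$ bounded uniformly thanks to Proposition \ref{prop:sol_coho}, and with $Z=0$, $\chi_i=P_i$ bounded by Lemma \ref{lem:def_Taylor_P}) to check that both curves are well-defined on $[0,h]$ and remain of size $\lesssim\varepsilon$ in $H^s$, once $\varepsilon_0$ is small enough. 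Since $\alpha(0)=\Phi_{hT}^1(u)=\Phi_T^h(u)=\beta(0)$, the error $e(t):=\alpha(t)-\beta(t)$ vanishes at $t=0$, and the goal reduces to proving $\|e(h)\|_{H^s}\lesssim h^2\varepsilon^{r+2}$.

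Next I would identify the vector field generating $\alpha$. By Lemma \ref{lem:jolie_formule} of the Appendix applied to the parametric family $t\mapsto B(t)$,
\[ \partial_t \alpha(t) = -i\, \nabla G(t)(\alpha(t)), \qquad G(t) := \int_0^1 (\partial_t B(t)) \circ \Phi_{B(t)}^{-\tau}\,\mathrm{d}\tau, \]
while $\partial_t\beta(t)=-i\nabla P(\beta(t))$ by definition. Since $\mathbb{C}^{\mathcal{N}_K}$ is finite dimensional and $B(t)$ is polynomial, the map $u\mapsto G(t)(u)$ is analytic near $0$ and its Taylor expansion there matches, degree by degree, the formal series $\varphi(\mathrm{ad}_{B(t)})\partial_t B(t)$ used in the heuristic derivation of \eqref{eq:the_system}.

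The crux of the argument is the quantitative estimate
\begin{equation}\label{eq:planner_key}
 \|\nabla(G(t)-P)(w)\|_{H^s} \lesssim_{r,s,\rho,\delta} t\, \|w\|_{H^s}^{r+2} \qquad\text{for all } w \text{ with }\|w\|_{H^s}\leq 2\varepsilon,
\end{equation}
which rests on \emph{two} vanishings of $G(t)-P$. \emph{Vanishing in $u$ to order $r+3$}: by construction of \eqref{eq:the_system}, the homogeneous component of degree $n+2$ of $G(t)$ equals $P_n$ for every $n\in\{1,\dots,r\}$ and every $t$, so all Taylor components of $G(t)-P$ of degree $\leq r+2$ vanish identically in $t$. \emph{Vanishing at $t=0$}: since $B_n(0)=0$ and $K_n(0)=0$ (no term survives in $K_{n,k}$ when $k\geq 1$ and all $B_{i_j}(0)=0$, and $K_{n,0}=0$ by construction), the cohomological equation yields $\partial_t B_n(0)=\varphi(\mathrm{ad}_{hT})^{-1}P_n$ and thus $G(0)=\varphi(\mathrm{ad}_{hT})\partial_t B(0)=P$. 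Writing $\nabla(G(t)-P)(w)=\int_0^t \nabla \partial_s G(s)(w)\,\mathrm{d}s$, I observe that $\partial_s G(s)$ inherits the degree-$\geq r+3$ Taylor vanishing in $w$ (as the low-degree components of $G(s)$ are $s$-independent). I would then prove \eqref{eq:planner_key} by truncating the expansion $\varphi(\mathrm{ad}_{B(s)})\partial_s B(s)$ at a sufficiently high but finite degree in $u$, controlling the polynomial part via iterated application of Proposition \ref{1209} together with the vector field estimate of Proposition \ref{vectorfield} (fed by the uniform $\mathscr{H}$-norm bounds of Proposition \ref{prop:sol_coho}), and controlling the analytic tail by geometric convergence when $\|w\|_{H^s}$ is small enough.

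Finally I would close the argument with Gronwall. Rewriting
\[ \partial_t e(t) = -i\bigl[\nabla P(\alpha(t))-\nabla P(\beta(t))\bigr] - i\, \nabla(G(t)-P)(\alpha(t)), \]
Corollary \ref{difference} bounds the first bracket by $O(\varepsilon)\|e(t)\|_{H^s}$ (since each $P_n$ has degree $\geq 3$) and \eqref{eq:planner_key} bounds the second by $O(t\,\varepsilon^{r+2})$. The CFL condition \eqref{eq:CFL} forces $h\lesssim_{r,\rho,\delta}1$, so the Gronwall exponential factor $e^{C\varepsilon h}$ stays bounded and
\[ \|e(h)\|_{H^s} \lesssim \int_0^h s\,\varepsilon^{r+2}\,\mathrm{d}s \lesssim h^2\,\varepsilon^{r+2}, \]
which is exactly \eqref{eq:backward}. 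The main obstacle in this plan is \eqref{eq:planner_key}: although the double vanishing of $G(t)-P$ is transparent at the level of formal Taylor series, turning it into a rigorous $H^s$-estimate on the analytic (not polynomial) vector field $\nabla G(t)-\nabla P$ requires careful truncation of the infinite series $\varphi(\mathrm{ad}_{B(t)})\partial_t B(t)$ together with tail estimates, and this is exactly where the $\mathscr{H}$-norm bookkeeping developed in Section \ref{z2} becomes indispensable.
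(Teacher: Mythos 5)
Your plan is correct and follows essentially the same route as the paper: you introduce $G(t)$ (the paper's $Q_t$), invoke Lemma \ref{lem:jolie_formule} to identify the generating vector field, reduce the estimate to the double vanishing of $G(t)-P$ (in degree $\geq r+3$ via the cohomological equations, and at $t=0$ via $B_n(0)=0$ and $K_n(0)=0$), and close with Gronwall. The paper packages the key estimate \eqref{eq:planner_key} as Proposition \ref{prop:rewrite} (the identity $Q_t = P + \sum_{n>r}K_n(t)$) together with the $t$-factor in Lemma \ref{lem:on_estime_kn}, exactly the truncation-plus-tail argument you sketch.
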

\begin{remark} $\Phi_{H_{h}}^h$ denotes the Hamiltonian flow of $H_h$ at time $h$, i.e. the flow at times $h$ of the equation $i\partial_t u= \nabla H_h(u)$. Its existence for times smaller than or equal to $h$ is ensured by the uniform estimates of Proposition \ref{prop:sol_coho} and Proposition \ref{prop:flow}.
\end{remark}

Before proving Theorem \ref{main}, we rewrite, in the following proposition, the cohomological equation \eqref{eq:the_system}, in a more integrated form. It is one of the key points of the proof of Theorem \ref{main}.
\begin{proposition} \label{prop:rewrite} Setting, for all $t\in [0,h]$,
\begin{equation}
\label{eq:def_Qt}
Q_t :=  \int_0^1 (\partial_t B(t))\circ \Phi_{B(t)}^{-\nu} \mathrm{d}\nu,
\end{equation}
there exists $\varepsilon_0 \gtrsim_{r,\rho,\delta} 1$  such that, on $B_{H^{1/2}}(0,\varepsilon_0) \cap \mathbb{C}^{\mathcal{N}_K}$, we have 
\begin{equation}
\label{eq:coho_int}
Q_t = P + \sum_{n>r} K_n(t).
\end{equation}
\end{proposition}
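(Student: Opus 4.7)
The identity is the rigorous counterpart of the formal derivation in Subsection 3.1 that motivated \eqref{eq:the_system}. The strategy is to realize $Q_t(u)$, for $u$ small, as the convergent series $\varphi(\mathrm{ad}_{B(t)})\partial_t B(t)(u) = \sum_{k\geq 0}\frac{1}{(k+1)!}\mathrm{ad}_{B(t)}^k\partial_t B(t)(u)$, then expand by multilinearity and use \eqref{eq:the_system} to identify the result.

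First, Proposition \ref{prop:sol_coho} bounds $\|\partial_t B_i\|_{L^\infty(0,h;\mathscr{H}^{i+2})}$ uniformly, so $\|B_i(t)\|_{\mathscr{H}}\lesssim h$. Applying Proposition \ref{prop:flow} with $Z=hT$ and $\chi = B_1(t)+\cdots+B_r(t)$ furnishes $\varepsilon_0 \gtrsim_{r,\rho,\delta}1$ such that, for every $u \in B_{H^{1/2}}(0,\varepsilon_0)\cap\mathbb{C}^{\mathcal{N}_K}$, the flow $\nu\mapsto \Phi_{B(t)}^{-\nu}(u)$ exists on $[-1,1]$ and remains of size $O(\|u\|_{H^{1/2}})$. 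In particular $Q_t(u)$ is well-defined and real-analytic. Fix such a $u$ and set $g(\nu):=(\partial_t B(t))(\Phi_{B(t)}^{-\nu}(u))$. The identity $\partial_\nu(F\circ \Phi_L^{-\nu}) = \{L,F\}\circ \Phi_L^{-\nu}$, iterated, gives $g^{(k)}(\nu)=(\mathrm{ad}_{B(t)}^k\partial_t B(t))(\Phi_{B(t)}^{-\nu}(u))$; Taylor's formula with integral remainder in $\nu$, integrated over $[0,1]$, yields
\begin{equation*}
Q_t(u)=\sum_{k=0}^{N-1}\frac{1}{(k+1)!}\mathrm{ad}_{B(t)}^k\partial_t B(t)(u)+R_N(u),\quad |R_N(u)|\leq \tfrac{1}{N!}\sup_{s\in[0,1]}\bigl|(\mathrm{ad}_{B(t)}^N\partial_t B(t))(\Phi_{B(t)}^{-s}(u))\bigr|.
\end{equation*}
To pass to the limit, I decompose $\mathrm{ad}_{B(t)}^N\partial_t B(t)$ into homogeneous components of degree $n+2$, estimate each component's $\mathscr{H}$-norm by iterating Proposition \ref{1209}, and evaluate via Corollary \ref{cor:ev_pol}; the factor $\|u\|^{n+2}$ (with $n\leq Nr+r$) compensates the combinatorial growth for $\|u\|_{H^{1/2}}$ small enough (possibly after shrinking $\varepsilon_0$), so $R_N(u)\to 0$.

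Next I reorganize the series by bilinearity: $\mathrm{ad}_{B(t)}^k = \sum_{(i_1,\dots,i_k)\in\{0,\dots,r\}^k}\mathrm{ad}_{B_{i_1}}\cdots\mathrm{ad}_{B_{i_k}}$ and $\partial_t B(t) = \sum_{m=1}^r\partial_t B_m$ (since $\partial_t B_0 = 0$). Separating the terms with all $i_j=0$ (which recombine into $\sum_m\varphi(\mathrm{ad}_{hT})\partial_t B_m = \varphi(\mathrm{ad}_{hT})\partial_t B(t)$) from those with at least one $i_j>0$ (which, grouped by the output degree $n+2 = m+i_1+\cdots+i_k+2$, reproduce $\sum_{n\geq 1}K_n(t)$ by the definition of $K_n$ in \eqref{eq:the_system}), I obtain
\begin{equation*}
Q_t = \varphi(\mathrm{ad}_{hT})\partial_t B(t) + \sum_{n\geq 1}K_n(t).
\end{equation*}
Finally, summing the cohomological equations \eqref{eq:the_system} over $n\in\{1,\dots,r\}$ gives $\varphi(\mathrm{ad}_{hT})\partial_t B(t)=P-\sum_{n=1}^rK_n(t)$, whose substitution yields the announced identity $Q_t=P+\sum_{n>r}K_n(t)$. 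Note that the right-hand side is convergent on the ball since, by Lemma \ref{lem:on_estime_kn} and Corollary \ref{cor:ev_pol}, $|K_n(t)(u)|\lesssim_{r} C^n\|u\|_{H^{1/2}}^{n+2}$.

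The principal obstacle is the convergence argument for $R_N$: iterating Proposition \ref{1209} $N$ times produces constants of order $(N r)^N$, so the crude Taylor bound is insufficient. One must decompose $\mathrm{ad}_{B(t)}^N\partial_t B(t)$ by output degree, observe that the degree-$(n+2)$ component involves at most $n$ non-zero $\mathrm{ad}_{B_{i_j}}$'s (yielding a controlled combinatorial factor), and exploit that the evaluation factor $\|u\|^{n+2}$ scales with the output degree. Once this is in hand, the rest of the proof is algebraic and mirrors the formal derivation of Subsection 3.1 exactly.
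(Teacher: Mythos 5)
Your proposal is correct and follows essentially the same route as the paper's proof: iterate $\partial_\nu(F\circ\Phi_L^{-\nu})=\{L,F\}\circ\Phi_L^{-\nu}$ to Taylor-expand $Q_t(u)$, decompose $\mathrm{ad}_{B}^k\partial_t B$ by separating the pure-$B_0$ strings from those containing at least one $B_i$ with $i\geq 1$ (the paper's display \eqref{eq:pas_mal_comme_exp_non}), and then substitute the cohomological system \eqref{eq:the_system}. The only genuine difference is bookkeeping order: you pass to the limit $N\to\infty$ first and then reorganize the resulting infinite series, whereas the paper reorganizes at a fixed truncation order $q$ and then sends $q\to\infty$, which leaves three explicit tail terms ($\sum_{k>q}\mathrm{ad}_{hT}^k\partial_t B_n/(k+1)!$, $\sum_{n\geq 1}\sum_{k>q}K_{n,k}/(k+1)!$, and $\int_0^1 R_{q,\nu}$) to estimate. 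The two strategies are interchangeable and require the same underlying estimates; in both cases the crux, which you correctly flag, is to sort terms by output degree so that the $\varepsilon_0^{n+2}$ factor from Corollary \ref{cor:ev_pol} and the CFL bound $h\,\omega_{K/2}(r+2)\leq 2\pi$ on the diagonal operator $\mathrm{ad}_{hT}$ jointly beat the combinatorics from iterating Proposition \ref{1209} — this is precisely what the paper delegates to Lemma \ref{lem:on_estime_kn} (estimate \eqref{eq:lestime_quon_veut_presque}).
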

\begin{remark} $\bullet$ As previously, $\Phi_{B(t)}^{\varsigma}$ denotes the (autonomous) Hamiltonian flow of $B(t)$ at time $\varsigma$, i.e. the flow at time $\varsigma$ of $i\partial_{\tau}u=(\nabla B(t))(u) $. Its existence is ensured by the same arguments as those ensuring the existence of $\Phi_{H_{h}}^h$. \\
$\bullet$ The uniform convergence of the series \eqref{eq:coho_int} is ensured by the bound \eqref{eq:lestime_quon_veut} on $K_n$ given by Lemma \ref{lem:on_estime_kn} and by the pointwise estimate given by Corollary \ref{cor:ev_pol}.
\end{remark}
\begin{proof}[Proof of Proposition \ref{prop:rewrite}] Let $u\in B_{H^{1/2}}(0,\varepsilon_0) \cap \mathbb{C}^{\mathcal{N}_K}$ where $\varepsilon_0$ will be chosen later small enough with respect to $r,\rho$ and $\delta$. First, we note that by definition of the flow and the Poisson bracket, we have  
$$
\partial_\nu [(\partial_t B(t))\circ \Phi_{B(t)}^{-\nu}(u)] = (\nabla (\partial_t B(t)), i \nabla B(t))_{L^2} \circ \Phi_{B(t)}^{-\nu}(u)  = \{ B(t), \partial_t B(t)  \}\circ \Phi_{B(t)}^{-\nu}(u)  .
$$ 
By iterating this relation, we deduce the following Taylor expansion for all $q>r$
$$
 (\partial_t B(t))\circ \Phi_{B(t)}^{-\nu}(u)  = \sum_{k=0}^q \frac{\nu^k}{k!} \mathrm{ad}_{B(t)}^k \partial_t B(t)(u) + R_{q,\nu}(u)
$$
where
$$
R_{q,\nu}(u) = \int_{0}^\nu \frac{(\nu-\varsigma)^q}{q!} \mathrm{ad}_{B(t)}^{q+1} \partial_t B(t) \circ \Phi_{B(t)}^{-\varsigma}(u) \mathrm{d}\varsigma.
$$
Integrating this expansion, we get that
$$
Q_t(u) = \sum_{k=0}^q \frac{1}{(k+1)!} \mathrm{ad}_{B(t)}^k \partial_t B(t)(u) + \int_0^1 R_{q,\nu}(u) \mathrm{d}\nu.
$$
Expanding the main factor of the sum, we have
\begin{equation}
\label{eq:pas_mal_comme_exp_non}
\begin{split}
\mathrm{ad}_{B}^k \partial_t B  &= \sum_{\ell=1}^r \sum_{0\leq i_1,\cdots,i_k \leq r} \mathrm{ad}_{B_{i_1}} \cdots \mathrm{ad}_{B_{i_k}} \partial_t B_\ell \\
&= \sum_{n \geq 1} \sum_{m=0}^{n-1} \sum_{i_1+\cdots + i_k=m} \mathrm{ad}_{B_{i_1}} \cdots \mathrm{ad}_{B_{i_k}} \partial_t B_{n-m} \\
&=: \sum_{n \geq 1} K_{n,k} + \sum_{n =1}^r \mathrm{ad}_{B_0}^k \partial_t B_n.
\end{split}
\end{equation}
It follows that, since $B$ solves the cohomological equation \eqref{eq:the_system}, we have\footnote{on $B_{H^{1/2}}(0,\varepsilon_0) \cap \mathbb{C}^{\mathcal{N}_K}$ with $\varepsilon_0$ small enough to ensure that the series converge.}
\begin{equation*}
\begin{split}
Q_t &=  \sum_{n =1}^r \varphi{(\mathrm{ad}_{B_0})} \partial_t B_n + \sum_{n\geq 1} K_n - \sum_{k>q} \sum_{n =1}^r \frac{\mathrm{ad}_{B_0}^k}{(k+1)!} \partial_t B_n - \sum_{n\geq 1}\sum_{k>q} \frac{K_{n,k}}{(k+1)!} +  \int_0^1 R_{q,\nu}\mathrm{d}\nu \\
&= P +  \sum_{n>r} K_n - \sum_{k>q} \sum_{n =1}^r \frac{\mathrm{ad}_{hT}^k}{(k+1)!} \partial_t B_n -\sum_{n\geq 1} \sum_{k>q} \frac{K_{n,k}}{(k+1)!}+  \int_0^1 R_{q,\nu} \mathrm{d}\nu
\end{split}
\end{equation*}
Finally it only remains to prove that the three remainder terms go to $0$ as $q$ goes to $+\infty$. This point is usually never justified (see \cite{MR2895408,MR2811583}). It does not require any new ideas or fundamental estimates, but it is quite heavy. We therefore only briefly describe what could be done\footnote{Note that we could also use some analyticity arguments but they require to be a little bit cautious due to the complex conjugations in the vector fields.}. For the first one, the operator $\mathrm{ad}_{hT}$ being diagonal (see Lemma \ref{poissonquad}), it is obvious. For the second one, using the bound \eqref{eq:lestime_quon_veut_presque} of Lemma \ref{lem:on_estime_kn} and the pointwise estimate of Corollary \ref{cor:ev_pol}, there exists constant a $C\lesssim_{r,\delta,\rho} 1$ such that
\begin{equation*}
\begin{split}
\sum_{n\geq 1} \sum_{k>q} \frac{|K_{n,k}(u)|}{(k+1)!}  &\lesssim \sum_{n\geq 1} \sum_{k>q} \frac1{(k+1)!}  C^{n+k} (n+2)^{k+1} (r+2)^{k+1}  \varepsilon_0^{n+2} .
\end{split}
\end{equation*}
To prove that it goes to $0$ as $q$ goes to $+\infty$, it suffices to note that, provided that $\varepsilon$ is small enough, it is the remainder term of a convergent series :
$$
\sum_{n\geq 1} \sum_{k\geq 0} \frac1{(k+1)!}  C^{n+k+3} (n+2)^{k+1} (r+2)^{k+1}  \varepsilon_0^{n+2} \leq \sum_{n\geq 1} (e^{C(r+2) })^{n+2} C^{n+2} \varepsilon_0^{n+2} <\infty.
$$
Finally, for the last term associated with $R_{q,\nu}$, we have to replace $ \mathrm{ad}_{B(t)}^{q+1} \partial_t B(t)$ by its expansion given by \eqref{eq:pas_mal_comme_exp_non} and to use that, thanks to Proposition \ref{prop:flow},
$ \|\Phi_{B(t)}^{-\varsigma}(u) \|_{H^{1/2}} \leq 2 \varepsilon_0$. Then we conclude using estimates similar to the ones used for the other terms.
\end{proof}

Thanks to this proposition, we are in position to prove Theorem \ref{main}.
\begin{proof}[\underline{Proof of Theorem \ref{main}}]
We are going to prove the following estimate
\begin{equation}
\label{eq:cequonvavraimentprouver}
\sup_{0\leq t \leq h} {\Vert v(t) \Vert}_{H^s} \lesssim_{r,s,\rho,\delta}  h^2 \varepsilon^{r+2} \quad \mathrm{where} \quad v(t)=(\Phi_{P}^t \circ \Phi_{T}^h - \Phi_{B(t)}^1)(u) .
\end{equation}
Note that, since $\Phi_{H_{h}}^h = \Phi_{h H_h}^1 = \Phi_{B(h)}^1$, \eqref{eq:cequonvavraimentprouver} is stronger than \eqref{eq:backward}. 

Then, we note that, by Lemma \ref{lem:jolie_formule} of the Appendix, we have\footnote{the formula begin local, the assumption of finite support is not restrictive. It is only used to get global solutions and so to avoid discussions about the time of existence of the solutions.}
\begin{equation}
\label{eq:cest_pas_rien}
\partial_t \Phi_{B(t)}^1 = - i (\nabla Q_t) \circ \Phi_{B(t)}^1
\end{equation}
where $Q_t$ is defined by \eqref{eq:def_Qt}. Thus, 
 we get
\begin{equation*}
\begin{split}
&{\Vert v(t) \Vert}_{H^s} \leq \int_0^{t}  {\Vert \partial_t  v(\tau)  \Vert}_{H^s}  \mathrm{d}\tau \\
\leq&  \int_0^{t}  {\Vert \nabla Q_\tau  \circ \Phi_{B(\tau)}^1(u) - \nabla P \circ  \Phi_{P}^\tau \circ \Phi_{T}^h(u)  \Vert}_{H^s}  \mathrm{d}\tau \\
\leq& \int_0^{t} \underbrace{ {\Vert \nabla P  \circ \Phi_{B(\tau)}^1(u) - \nabla P \circ  \Phi_{P}^\tau \circ \Phi_{T}^h(u)  \Vert}_{H^s}}_{=:E_1} + \underbrace{ {\Vert \nabla (Q_\tau - P)  \circ \Phi_{B(\tau)}^1(u) \Vert}_{H^s}}_{=:E_2}  \mathrm{d}\tau.
\end{split}
\end{equation*}
\noindent\emph{Estimate of $E_1$:} Using the uniform bounds on $B$ proven in Proposition \ref{prop:sol_coho} to apply Proposition \ref{prop:flow}, we deduce that, provided that $\varepsilon_0$ is chosen small enough, we have 
\begin{equation}
\label{eq:cest_pas_trop_grand}
\|\Phi_{B(\tau)}^1(u) \|_{H^s}\leq 2\varepsilon.
\end{equation}
Similarly, using the uniform bounds of Lemma \ref{lem:def_Taylor_P} on the homogeneous polynomials $P_n$ and applying Proposition \ref{prop:flow}, we deduce that\footnote{recall that $\Phi_{T}^h$ is an isometry on $H^s$.}
$$
\|\Phi_{P}^\tau \circ \Phi_{T}^h(u) \|_{H^s}\leq 2\varepsilon.
$$
Using these bounds to apply the mean value inequality with Corollary {\ref{difference}}, we get that, provided that $\varepsilon_0$ is small enough
$$
E_1 \leq   {\Vert  v(\tau) \Vert}_{H^s}.
$$
\noindent\emph{Estimate of $E_2$:} First, by Proposition \ref{prop:rewrite}, we have
$$
\nabla (Q_\tau - P)  = \sum_{n>r} \nabla K_n .
$$
Then by applying the vector field estimate given by Proposition \ref{vectorfield} with the uniform estimate on $K_n$ given by Lemma \ref{lem:on_estime_kn} and the estimate \eqref{eq:cest_pas_trop_grand} on $\|\Phi_{B(\tau)}^1(u) \|_{H^s}$, we get that
$$
\|\nabla (Q_\tau - P)(u)\|_{H^s}  \leq \sum_{n>r} \|\nabla K_n(\tau,u)\|_{H^s} \lesssim_{r,s,\delta,\rho} \tau \sum_{n>r} C^n \varepsilon^{n+1}
$$
where $C>0$ is a constant depending on $r,s,\delta,\rho$. It follows that provided that $\varepsilon_0$ is small enough, we have
$$
E_2 \lesssim_{r,s,\delta,\rho} \tau \varepsilon^{r+2}.
$$
\noindent \emph{Conclusion :} Putting the estimates on $E_1$ and $E_2$ together, we have proven that
$$
{\Vert v(t) \Vert}_{H^s} \lesssim_{r,s,\delta,\rho}  \int_0^t \| v(\tau) \|_{H^s} \mathrm{d}\tau + h^2 \varepsilon^{r+2}.
$$
By applying the Gr\"onwall inequality, we deduce that, as expected, $v$ satisfies the estimate \eqref{eq:cequonvavraimentprouver}.
\end{proof}

\subsection{Backward error analysis} In the previous subsection, in Theorem \ref{main}, we performed the backward error analysis of the Lie splitting method in the particular case where the nonlinearity $g$ is polynomial. We could extend this result to the non polynomial case by adding $V-P$ to $H_h$. However, it would generate some technicalities that are not useful in order to prove the almost global preservation of the harmonic actions (i.e. Theorem \ref{thm:main}). The point is that, actually, to prove Theorem \ref{thm:main}, the only important point is to have an approximation of the numerical flow by an Hamiltonian flow up to an error term of order $h\, \varepsilon^{r+2}$ (i.e. the exponent $2$ on $h^2$ in \eqref{eq:backward} is not necessary for us). As a consequence, we prove, in the following proposition, that the modified Hamiltonian $H_h$ provides directly such an approximation.

\begin{proposition}\label{main2}
For all $s\geq 1/2$, there exists $\varepsilon_0 \gtrsim_{r,s,\rho,\delta} 1$  such that for all $u\in \mathbb{C}^{\mathcal{N}_K}$ satisfying $\varepsilon := {\Vert u \Vert}_{H^s} \leq  \varepsilon_0$, we have 
\begin{equation}
\label{eq:backward_cinf}
{\Vert (  \Phi_V^{h}\circ \Phi_T^{h}- \Phi_{H_{h}}^h)(u) \Vert}_{H^s} \lesssim_{r,s,\rho,\delta}  h \, \varepsilon^{r+2}.
\end{equation}
\end{proposition}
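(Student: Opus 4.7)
The strategy is to compare $\Phi_V^h$ with $\Phi_P^h$ and then invoke Theorem \ref{main}. Using the triangle inequality,
\[
(\Phi_V^h \circ \Phi_T^h - \Phi_{H_h}^h)(u) = \big((\Phi_V^h - \Phi_P^h)\circ \Phi_T^h\big)(u) + \big(\Phi_P^h \circ \Phi_T^h - \Phi_{H_h}^h\big)(u).
\]
The CFL condition \eqref{eq:CFL} forces $h \lesssim_{r,\rho} 1$, so Theorem \ref{main} directly controls the second term by $h^2 \varepsilon^{r+2} \lesssim h \varepsilon^{r+2}$. It therefore suffices to bound the first term.

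Setting $w := \Phi_T^h(u)$, one has $\|w\|_{H^s} = \varepsilon$ since $\Phi_T^h$ is an $H^s$-isometry. Let $v(t) := \Phi_V^t(w)$ and $p(t) := \Phi_P^t(w)$. Proposition \ref{prop:flow} applied to $P = P_1 + \cdots + P_r$, together with the uniform bounds on the $P_n$ given by Lemma \ref{lem:def_Taylor_P}, yields $\|p(t)\|_{H^s} \leq 2\varepsilon$ on $[0,h]$ for $\varepsilon$ small enough; a direct bootstrap on $i\partial_t v = \nabla V(v)$ exploiting the fact that $\nabla V$ vanishes to order $2$ at the origin gives the same bound for $v(t)$. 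Duhamel's formula then reads
\[
v(t) - p(t) = -i\int_0^t \big[\nabla V(v(\tau)) - \nabla V(p(\tau))\big]\,\mathrm{d}\tau - i\int_0^t \nabla(V - P)(p(\tau))\,\mathrm{d}\tau,
\]
and the mean value inequality combined with the estimate $\|\mathrm{d}\nabla V(\xi)\|_{H^s \to H^s} \lesssim_s \|\xi\|_{H^s}$ (which follows from Corollary \ref{difference} applied to the cubic Taylor term $P_1$, plus a higher-order contribution negligible for small $\xi$) bounds the first integrand by $C\varepsilon \|v(\tau) - p(\tau)\|_{H^s}$.

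A Gr\"onwall argument will then close the estimate provided one proves the Taylor-remainder bound
\[
\|\nabla(V - P)(p(\tau))\|_{H^s} \lesssim_{r,s} \varepsilon^{r+2}.
\]
To prove it, start from the explicit formula $V(u) = \frac{1}{K}\sum_{x\in\mathbb{T}_K} G(v_u(x))$ with $v_u := \Lambda^{-1/2}\phi(h\Lambda)\Re u$, from which a direct computation gives $(\nabla(V-P)(u))_k = \omega_k^{-1/2}\phi(h\omega_k)\,\widehat{R(v_u)}_k$, where $R(y)$ is the difference between $g$ and its Taylor polynomial of degree $r+1$ at the origin; since $g$ vanishes to order $2$ there, $R$ vanishes to order $r+2$. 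The smoothing by $\Lambda^{-1/2}$ gives $\|v_u\|_{H^1} \lesssim \|u\|_{H^{1/2}} \leq \varepsilon$, hence $\|v_u\|_{L^\infty} \lesssim \varepsilon$ by the 1D Sobolev embedding $H^1(\mathbb{T}) \hookrightarrow L^\infty(\mathbb{T})$, and a Moser-type composition estimate then yields $\|R(v_u)\|_{H^{s-1/2}} \lesssim_{r,s} \varepsilon^{r+2}$. Since $\omega_k \sim \langle k\rangle$, the multiplier $\omega_k^{-1/2}\phi(h\omega_k)$ absorbs the $\langle k\rangle^{1/2}$ gap between $H^{s-1/2}$ and $H^s$, giving the desired bound.

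The main obstacle is this Moser-type tame estimate on $\|R(v_u)\|_{H^{s-1/2}}$ uniform in $K$; although routine in principle, it is the step that crucially uses both the smoothing from $\Lambda^{-1/2}$ and the high-order vanishing of $R$ to recover the sharp power $\varepsilon^{r+2}$. Once this estimate is in place, Gr\"onwall's inequality yields $\|v(h) - p(h)\|_{H^s} \lesssim_{r,s,\rho,\delta} h\,\varepsilon^{r+2}$, which combined with the bound on the second term from Theorem \ref{main} completes the proof.
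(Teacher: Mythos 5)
Your overall strategy matches the paper's: split the error into the comparison $\Phi_V^h \circ \Phi_T^h$ versus $\Phi_P^h \circ \Phi_T^h$, control the latter against $\Phi_{H_h}^h$ by Theorem~\ref{main}, and reduce the former to an estimate of $\nabla(V-P)$. That reduction is exactly the content of the paper's Corollary~\ref{cor:corico}. There are however two genuine issues.

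First, the comparison of $\Phi_V^h$ with $\Phi_P^h$ by Duhamel plus Gr\"onwall is heavier than needed, and the ``higher-order contribution negligible for small $\xi$'' you invoke to bound $\|\mathrm{d}\nabla V(\xi)\|_{H^s\to H^s}$ is not justified: $V$ is not a polynomial, so Corollary~\ref{difference} does not apply to it, and the composition operator $\mathrm{d}\nabla V$ requires its own estimate. The paper sidesteps this entirely by noting that $V$ and $P$ depend only on $q$ in the $(q,p)$ variables, hence $q$ is a constant of motion of $\Phi_V$ and $\Phi_P$; this gives the \emph{exact} identities $\Phi_V^h=\Phi_{V-P}^h\circ\Phi_P^h$ and $\Phi_{V-P}^h=\mathrm{Id}-ih\nabla(V-P)$, which turn the $\Phi_V$--$\Phi_P$ difference into $h\,\nabla(V-P)\circ\Phi_P^h\circ\Phi_T^h$ with no Gr\"onwall argument at all. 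Your route could likely be patched, but at the cost of proving an extra operator bound, whereas the paper's observation is both shorter and exact.

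Second, and more seriously, the ``Moser-type tame estimate'' you identify as the crux is not merely routine, and your version of it is set up in the wrong Sobolev exponent. The function $R(v_u)$ is defined by pointwise composition on the \emph{discrete} torus $\mathbb{T}_K$. Its discrete Fourier coefficients are therefore the aliased sums $\widehat{R(v_u)}_\ell=\sum_{a\in\mathbb{Z}}\widehat{R(\breve q)}_{\ell+aK}$, where $\breve q$ is the trigonometric interpolation of $v_u$; so $\|R(v_u)\|_{H^{\nu}(\mathbb{T}_K)}$ is not the same as $\|R(\breve q)\|_{H^{\nu}(\mathbb{T})}$, and you cannot simply invoke a continuous-torus Moser/composition estimate. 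The paper handles this with the aliasing Lemma~\ref{polyfix}, which requires $\nu>1/2$ to sum $\langle a\rangle^{-2\nu}$; that forces the paper to estimate $\|\psi^K\|_{H^{s+1/2}}$ (and then use $\|\Lambda^{-1/2}\phi(h\Lambda)\psi^K\|_{H^s}\leq\|\Lambda^{-1/2}\phi(h\Lambda)\psi^K\|_{H^{s+1}}\lesssim\|\psi^K\|_{H^{s+1/2}}$). Your proposed target $\|R(v_u)\|_{H^{s-1/2}}$ sits at exponent $\nu=s-1/2$, which equals $0$ at the borderline $s=1/2$ and is $\leq 1/2$ for all $s\leq 1$, precisely the range where the aliasing control fails. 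So as written the key step does not close; you would need to move the Sobolev exponent up by a unit, as the paper does, and then pass through the aliasing lemma rather than a bare composition estimate.
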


In the previous subsection, we proved that $\Phi_{H_h}^h$ is a good approximation of $\Phi_{P}^h \circ \Phi_{T}^h$. So the main point to prove Proposition \ref{main2} will be to prove that $\Phi_{V}^h \circ \Phi_{T}^h$ is close to $\Phi_{P}^h \circ \Phi_{T}^h$. 
Before doing it, we prove some technical useful lemmas.

\begin{lemma}\label{polyfix}
Let $\nu > 1/2$ and $\psi\in H^{\nu}(\mathbb{Z}).$ There exists $C_{\nu}>0$ such that 
\[ {\Vert \psi^K \Vert}_{H^{\nu}} \leq C_{\nu}  {\Vert \psi\Vert}_{H^{\nu}}, \]
 where $\psi^K = \psi_{|\mathbb{T}_K}$ denotes the restriction of $\psi$ on the discrete torus $\mathbb{T}_K$.
\end{lemma}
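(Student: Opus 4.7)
The plan is to use the standard aliasing formula together with a Cauchy--Schwarz weighting argument. The hypothesis $\nu > 1/2$ will enter exactly to make a certain tail series summable.

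First, I would identify $\psi$ with the $2\pi$-periodic function $\widetilde\psi(x) := \sum_{n \in \mathbb{Z}} \psi_n e^{inx}$, which is well-defined (even continuous) since $\nu > 1/2$. Computing the discrete Fourier coefficients of $\psi^K = \widetilde\psi_{|\mathbb{T}_K}$ via the orthogonality relation $\frac{1}{K}\sum_{x \in \mathbb{T}_K} e^{i(n-k)x} = \mathds{1}_{n \equiv k\,(K)}$, I obtain the \emph{aliasing formula}
\[
\psi^K_k = \sum_{m \in \mathbb{Z}} \psi_{k+mK}, \qquad k \in \mathcal{N}_K.
\]

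Next I apply Cauchy--Schwarz with the weights $\langle k+mK\rangle^{\pm \nu}$:
\[
|\psi^K_k|^2 \leq \Bigl(\sum_{m \in \mathbb{Z}} \langle k+mK\rangle^{-2\nu}\Bigr) \Bigl(\sum_{m \in \mathbb{Z}} \langle k+mK\rangle^{2\nu} |\psi_{k+mK}|^2\Bigr).
\]
The key step is to bound the first factor uniformly by $C_\nu \langle k\rangle^{-2\nu}$. Splitting the sum, the term $m=0$ contributes exactly $\langle k\rangle^{-2\nu}$. For $m \neq 0$, using $|k|\leq K/2$, one has $|k+mK| \geq |m|K - |k| \geq |m|K/2$, so
\[
\sum_{m \neq 0} \langle k+mK \rangle^{-2\nu} \leq 2^{2\nu} K^{-2\nu} \sum_{m \neq 0} |m|^{-2\nu} \lesssim_\nu K^{-2\nu} \leq \langle k\rangle^{-2\nu},
\]
where $\sum_{m \neq 0} |m|^{-2\nu}$ converges precisely because $\nu > 1/2$, and where the last inequality uses $\langle K\rangle \geq \langle k\rangle$ for $|k|\leq K/2$.

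Finally, multiplying by $\langle k\rangle^{2\nu}$ and summing over $k \in \mathcal{N}_K$, I use the disjoint-partition property $\mathbb{Z} = \bigsqcup_{k \in \mathcal{N}_K} (k + K\mathbb{Z})$ to collapse the resulting double sum:
\[
\| \psi^K\|_{H^\nu}^2 \leq C_\nu \sum_{k \in \mathcal{N}_K} \sum_{m \in \mathbb{Z}} \langle k+mK\rangle^{2\nu} |\psi_{k+mK}|^2 = C_\nu \sum_{n \in \mathbb{Z}} \langle n \rangle^{2\nu} |\psi_n|^2 = C_\nu \|\psi\|_{H^\nu}^2.
\]
There is no real obstacle here; the only point needing mild care is the uniform (in $K$ and $k$) control of the weight sum $\sum_m \langle k+mK\rangle^{-2\nu}$ by $\langle k\rangle^{-2\nu}$, which is exactly where the threshold $\nu > 1/2$ shows up. All remaining manipulations are bookkeeping.
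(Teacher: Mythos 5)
Your proof is correct and follows essentially the same strategy as the paper: derive the aliasing formula $\psi^K_k = \sum_m \psi_{k+mK}$, apply Cauchy--Schwarz with $\langle\cdot\rangle^{\pm\nu}$ weights, and invoke $\nu > 1/2$ for summability before collapsing the double sum via the partition $\mathbb{Z} = \bigsqcup_{k\in\mathcal{N}_K}(k+K\mathbb{Z})$. The only cosmetic difference is where the weights sit in Cauchy--Schwarz (you bound $\sum_m\langle k+mK\rangle^{-2\nu}$ by $\langle k\rangle^{-2\nu}$ directly, whereas the paper uses the pointwise inequality $\langle\ell\rangle\langle a\rangle\lesssim\langle\ell+aK\rangle$ and factors out $\sum_a\langle a\rangle^{-2\nu}$ as a constant), but both rest on the same underlying estimate and the same use of $\nu > 1/2$.
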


\begin{proof}
We write $\psi(x)=\sum_{j\in \mathbb{Z}} \psi_j e^{ijx}$ in the Fourier basis and consider the collocation points $x_k=2\pi k/K\in \mathbb{T}_K$ on the grid. Applying a change of variable $j=\ell +aK$, we get 
\[\psi(x_k)=\sum_{j\in \mathbb{Z}} \psi_j e^{2\pi i\frac{j}{K}k} = \sum_{\ell \in \mathcal{N}_K}\sum_{a \in \mathbb{Z}} \psi_{\ell+aK} e^{2\pi i\frac{k}{K}\ell}= \sum_{\ell \in \mathcal{N}_K} \big( \sum_{a \in \mathbb{Z}} \psi_{\ell+aK} \big) e^{i\ell x_k}=  \psi^K(x_k).\]
It implies that the Fourier coefficients of $\psi^K$ are
$$
\psi^K_\ell = \sum\limits_{a \in \mathbb{Z}} \psi_{\ell+aK}.
$$ 
 Thus, taking the norm and applying the Cauchy--Schwarz inequality, we obtain
$$
{\Vert \psi^K \Vert}^2_{H^{\nu}(\mathbb{T}_K)}= \sum_{\ell \in \mathcal{N}_K} \langle \ell \rangle ^{2\nu} \bigg\vert \sum\limits_{a \in \mathbb{Z}} \psi_{\ell+aK} \bigg\vert^2
\leq \sum_{\ell \in \mathcal{N}_K} \left(\sum\limits_{a \in \mathbb{Z}} \langle \ell \rangle ^{2\nu} \langle a \rangle ^{2\nu} \vert\psi_{\ell+aK}\vert^2\right) \left( \sum\limits_{a \in \mathbb{Z}} \frac{1}{\langle a \rangle ^{2\nu}} \right)
$$
Moreover, if $a\in \mathbb{Z}\setminus \{0\}$ and $\ell \in \mathcal{N}_K\setminus \{0\}$, since  $|\ell|\leq K/2$, we note that
\begin{equation*}
\begin{split}
\langle \ell \rangle^2 \langle a \rangle^2 &\leq 4 \ell^2 a^2 \leq K^2 a^2 \leq 2 (|a|-1/2) K^2 \leq 2(\ell + aK)^2 \leq 2 \langle \ell + aK\rangle^2.
\end{split}
\end{equation*} 
We note that this last estimate holds trivially if $a=0$ or $\ell=0$. Hence, since $\nu>1/2$, there exists $C_{\nu}>0$ such that 
\[{\Vert \psi^K \Vert}^2_{H^{\nu}} \leq C_{\nu}^2 \sum_{\ell \in \mathcal{N}_K} \sum\limits_{a \in \mathbb{Z}} \langle \ell+aK \rangle ^{2\nu} \vert\psi_{\ell+aK}\vert^2 = C_{\nu}^2 \sum\limits_{j \in \mathbb{Z}} \langle j \rangle ^{2\nu} \vert\psi_{j}\vert^2 = C_{\nu}^2{\Vert \psi \Vert}^2_{H^{\nu}}. \]
\end{proof}

\begin{corollary} \label{cor:corico} For all $s>0$ and all $u\in \mathbb{C}^{\mathcal{N}_K}$ satisfying $\| u\|_{H^s} \leq 1$, we have
$$
\| \nabla (V - P)(u)\|_{H^s} \lesssim_{s,r,\rho} \| u\|_{H^s}^{r+2}
$$
\end{corollary}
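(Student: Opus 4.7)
The plan is to compute $\nabla(V-P)$ explicitly as $\Lambda^{-1/2}\phi(h\Lambda)$ applied to the pointwise Taylor remainder of $g$ composed with $q(u):=\Lambda^{-1/2}\phi(h\Lambda)\Re u$, and then close the estimate using the smoothing of $\Lambda^{-1/2}\phi(h\Lambda)$, Sobolev embedding on $\mathbb{T}$, and Lemma~\ref{polyfix} to control the aliasing arising from composition on $\mathbb{T}_K$.

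From $\nabla V(u)=\Lambda^{-1/2}\phi(h\Lambda)\,g(q(u))$ (recalled in Section~\ref{W1}) and the explicit form of $P_n$ in Lemma~\ref{lem:def_Taylor_P}, a direct computation gives
\[
\nabla(V-P)(u)=\Lambda^{-1/2}\phi(h\Lambda)\,R(q(u)),\qquad R(y):=g(y)-\sum_{k=2}^{r+1}\frac{g^{(k)}(0)}{k!}y^k.
\]
Since $g\in C^\infty$, $R$ is smooth and vanishes at order $r+2$ at $0$, so one can write $R(y)=y^{r+2}\tilde R(y)$ with $\tilde R\in C^\infty$. Since $\omega_k\sim_\rho\langle k\rangle$ and $\phi$ is bounded, the multiplier $\Lambda^{-1/2}\phi(h\Lambda)$ is uniformly bounded on every $H^\sigma$ and moreover satisfies the half-derivative smoothing bound $\|\Lambda^{-1/2}\phi(h\Lambda)f\|_{H^s}\lesssim_{\rho,\phi}\|f\|_{H^{s-1/2}}$. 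Applied to $\Re u$, this yields $\|q(u)\|_{H^{s+1/2}(\mathbb{T}_K)}\lesssim_{\rho,\phi}\|u\|_{H^s}$; identifying $q(u)$ with its (finite) trigonometric Fourier expansion, the same bound holds in $H^{s+1/2}(\mathbb{T})$, and since $s+1/2>1/2$ for $s>0$, Sobolev embedding gives $\|q(u)\|_{L^\infty(\mathbb{T})}\lesssim_s\|u\|_{H^s}$.

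I then split into two regimes. \emph{If $s>1/2$}: bound $\|\nabla(V-P)(u)\|_{H^s}\lesssim_{\rho,\phi}\|R(q(u))\|_{H^s(\mathbb{T}_K)}$, apply Lemma~\ref{polyfix} with $\nu=s$ to pass to $\|R(q(u))\|_{H^s(\mathbb{T})}$, then combine the Banach algebra structure of $H^s(\mathbb{T})$, the factorization $R(y)=y^{r+2}\tilde R(y)$, and a standard $C^\infty$ composition estimate to obtain $\|R(q(u))\|_{H^s(\mathbb{T})}\lesssim_{s,r}\|q(u)\|_{H^s(\mathbb{T})}^{r+2}\lesssim_\rho\|u\|_{H^s}^{r+2}$. \emph{If $0<s\leq 1/2$}: use the smoothing to write $\|\nabla(V-P)(u)\|_{H^s}\lesssim_{\rho,\phi}\|R(q(u))\|_{H^{s-1/2}(\mathbb{T}_K)}\leq\|R(q(u))\|_{L^2(\mathbb{T}_K)}\leq\|R(q(u))\|_{L^\infty(\mathbb{T}_K)}$ (the first inequality using $s-1/2\leq 0$), and then bound pointwise $\|R(q(u))\|_{L^\infty(\mathbb{T}_K)}\lesssim_r\|q(u)\|_{L^\infty(\mathbb{T})}^{r+2}\lesssim_s\|u\|_{H^s}^{r+2}$ via the defining expression of $R$ together with the Sobolev embedding from the previous paragraph.

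The main obstacle is the aliasing: $R(q(u))$ evaluated pointwise on $\mathbb{T}_K$ is not the restriction of a trigonometric polynomial supported on $\mathcal{N}_K$, so its discrete Fourier coefficients mix in contributions from frequencies outside $\mathcal{N}_K$. For $s>1/2$ this is precisely what Lemma~\ref{polyfix} handles; for $s\leq 1/2$ (where that lemma is inapplicable) the aliasing is bypassed entirely by reducing to a pointwise $L^\infty$ estimate, which is insensitive to it.
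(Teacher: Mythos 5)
Your proof is correct, and it correctly identifies both the key structure ($\nabla(V-P)=\Lambda^{-1/2}\phi(h\Lambda)\,R(q(u))$ with $R$ the Taylor remainder of $g$ of order $r+2$) and the aliasing subtlety (the pointwise composite $R(q(u))$ on $\mathbb{T}_K$ is the grid restriction of a non-bandlimited function, so discrete and continuous Sobolev norms must be related via Lemma~\ref{polyfix}). However, the case split you introduce at $s=1/2$ is unnecessary and makes the argument longer than the paper's. The paper keeps the full half-derivative gain of $\Lambda^{-1/2}$ on the \emph{output} side: it bounds $\|\nabla(V-P)(u)\|_{H^s}\le\|\nabla(V-P)(u)\|_{H^{s+1}}\lesssim_\rho\|\psi^K\|_{H^{s+1/2}(\mathbb{T}_K)}$, and since $s+1/2>1/2$ for every $s>0$, Lemma~\ref{polyfix} always applies with $\nu=s+1/2$; it then closes with the composition estimate (BCD Thm~2.87) and the bound $\|\breve q\|_{H^{s+1/2}}\lesssim_\rho\|u\|_{H^s}$, with no dichotomy. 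You instead either discard the gain entirely ($s>1/2$: apply Lemma~\ref{polyfix} at $\nu=s$) or use it to drop to nonpositive regularity and then to $L^\infty$ ($s\le1/2$), sidestepping the aliasing lemma. Both branches you write are valid — the low-regularity branch is even a bit more elementary since it avoids the paradifferential composition estimate — but the paper's single-regime argument at regularity $s+1/2$ is cleaner, and you already had the ingredient $\|q(u)\|_{H^{s+1/2}}\lesssim\|u\|_{H^s}$ in hand; had you used it on the $H^s$-side of the target norm as well, the case split would have disappeared.
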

\begin{proof}
First, we set
$\widetilde{q} = \Lambda^{-1/2}\phi(h\Lambda)\Re u.$ As a consequence, $P$ being the Taylor expansion of $V$ at order $r+2$, we have
\begin{equation}
\label{eq:trop_facile}
\nabla (V - P)(u) =  \Lambda^{-1/2} \phi(h\Lambda) g^{(\geq r+2)}( \widetilde{q})
\end{equation}
where $g^{(\geq r+2)}$ is the remainder term of the Taylor expansion of $g$ at order $r+1$ at the origin, i.e.
$$
g(y) =: \sum_{n=0}^{r+1} \frac{y^n}{n!} \partial_y^n g(0) + g^{(\geq r+2)}(y).
$$
Now, we have to be cautious. As discussed in Subsection \ref{sub:disc}, we identified functions on the discrete torus $ \mathbb{T}_K$ with functions on the continuous torus $\mathbb{T}$. Nevertheless, in this proof it is dangerous and it is better to distinguish them. Indeed, in \eqref{eq:trop_facile}, $\widetilde{q}$ is considered as a function on $ \mathbb{T}_K$ and $\Psi^K:=g^{(\geq r+2)}( \widetilde{q})$ is simply defined by composition, i.e.
$$
\forall x\in \mathbb{T}_K, \ \quad \psi^K(x) =  g^{(\geq r+2)}( \widetilde{q}(x)).
$$
However, denoting by $\breve{q} : \mathbb{T} \to \mathbb{R}$ the interpolation of $ \widetilde{q}$ to the continuous torus $\mathbb{T}$, i.e.
$$
\forall x\in \mathbb{T}, \quad \breve{q}(x) = \sum_{k\in \mathcal{N}_K}  \widetilde{q}_k e^{ikx},
$$
we also define a function $\psi : \mathbb{T} \to \mathbb{R}$ by composition,
$$
\forall x\in \mathbb{T}, \ \quad \psi(x) =  g^{(\geq r+2)}( \breve{q}(x)).
$$
The point is that, $\psi^K$ is the restriction of  $\psi$ to the grid, i.e. $ \psi^K = \psi_{|\mathbb{T}_K}$, but, a priori, the Fourier coefficients of $\psi$ are not supported on $\mathcal{N}_K$ so we cannot identify it with a function on the discrete torus.

\medskip

Anyway, by applying Lemma \ref{polyfix}, to $\psi$ and $\psi^K$, we get that
$$
\|\nabla (V - P)(u)\|_{H^{s+1}} \lesssim_{\rho} \| \psi^K \|_{H^{s+1/2}} \lesssim_{\rho,s} \| \psi \|_{H^{s+1/2}}.
$$
It only remains to control $\psi$. First, we note that by construction
$$
\| \breve{q} \|_{H^{s+1/2}} \lesssim_{\rho} \| u\|_{H^s}.
$$
So, since by assumption $ \| u\|_{H^s}\leq 1$ and $s>0$, we have
$$
\| \breve{q} \|_{L^\infty} \lesssim_{s} \| \breve{q} \|_{H^{s+1/2}}  \lesssim_{\rho,s} 1. 
$$
Since, moreover, $g^{(\geq r+2)}$ is a smooth function of order $r+2$ in $0$, it follows\footnote{here we use the basic fact that, if  $s>1/2$, $H^s(\mathbb{T})$ is a Banach algebra and if $u\in H^s(\mathbb{T};\mathbb{R})$ and if $f\in C^\infty_c(\mathbb{R};\mathbb{R})$ is a smooth compactly supported function then $f(u) \in H^s$ (see e.g. \cite[Thm 2.87 page 104]{BCD11}).} that as expected
$$
 \| \psi \|_{H^{s+1/2}} \lesssim_{s,r,\rho} \| \breve{q} \|_{H^{s+1/2}}^{r+2} \lesssim_{s,r,\rho} \| u \|_{H^{s}}^{r+2} .
$$
\end{proof}
Now, we are in position to prove Proposition \ref{main2}.
\begin{proof}[\underline{Proof of Proposition \ref{main2}}] For simplicity, here we use a special property of the nonlinear Klein--Gordon equation and the splitting method. We could do a more general proof but it would be more technical. Indeed, here, we have
$$
\Phi_{V}^h = \Phi_{V-P}^h \circ \Phi_{P}^h \quad \mathrm{and} \quad \Phi_{V-P}^h = \mathrm{Id} - ih\nabla (V-P).
$$
To prove these properties, it suffices to note that in the variables $(q,p)$, $P$ and $V$ depend only on $q$ and so that $q$ is a constant of motion of their flows (see equation \eqref{eq:flowVqp} for more explicit expressions).

Thus applying the triangular inequality, we get
\begin{equation*}
\begin{split}
 {\Vert (\Phi_{V}^h \circ \Phi_{T}^h - \Phi_{H_{h}}^h)(u) \Vert}_{H^s} &\leq  {\Vert (\Phi_{V}^h \circ \Phi_{T}^h -\Phi_{P}^h \circ \Phi_{T}^h)(u) \Vert}_{H^s} + {\Vert (\Phi_{P}^h \circ \Phi_{T}^h - \Phi_{H_{h}}^h)(u) \Vert}_{H^s} \\
 &= h \underbrace{ {\Vert \nabla(V-P)(\Phi_{P}^h \circ \Phi_{T}^h(u)) \Vert}_{H^s}}_{=:E_1} +\underbrace{ {\Vert (\Phi_{P}^h \circ \Phi_{T}^h - \Phi_{H_{h}}^h)(u) \Vert}_{H^s} }_{=:E_2}.
\end{split}
\end{equation*}
Theorem {\ref{main}} provides a control of $E_2$ by $h^2 \varepsilon^{r+2}$, so it suffices to focus on $E_1$.  Recalling that $\Phi_{T}^h$ is an isometry of $H^s$ and applying Proposition \ref{prop:flow} to estimate $\Phi_{P}^h$ with the uniform estimates on $P$ proven in Lemma \ref{lem:def_Taylor_P}, we deduce that, provided that $\varepsilon_0$ is small enough,
$$
\|\Phi_{P}^h \circ \Phi_{T}^h(u)\|_{H^s} \leq 2 \varepsilon.
$$
Therefore, it follows of Corollary \ref{cor:corico} that, as expected, provided that $\varepsilon_0$ is small enough, 
$$
E_1 \lesssim_{r,s,\delta}  \varepsilon^{r+2}.
$$
\end{proof}

 \section{Birkhoff normal form}\label{z3} 

In the previous section, we have proven that the numerical flow of the Lie splitting is equal, up to arbitrary high order error terms, to the flow of a modified Hamiltonian $H_h$. This Hamiltonian looks like the one of the semi-discretized equation \eqref{eq:KG_sd}. They are both nonlinear perturbations of the quadratic integrable Hamiltonian $T$. The aim in this section is to design a symplectic change of variable close to the identity such that in the new variable the nonlinear perturbation almost commute with $T$. It is a convenient way to average, up to arbitrarily high orders, the perturbative terms by the flow of $T$ while preserving the geometrical structure of the equation.
 
In this section $K\geq 1$ is a fixed integer. We are going to prove estimate uniform with respect to $K$. We do not need to consider the time-step $h$. Before stating the main result of this section, we introduce some standard definitions. First, we define the modulus of resonance.
\begin{definition}{(resonance modulus $\Omega_{j,\sigma}$)} 
For all $n\geq 1$, $j \in \mathcal{N}_K^{n+2}$,  $\sigma \in \{-1,1\}^{n+2}$, we set
$$
\Omega_{j,\sigma} := \sigma_1 \omega_{j_1} + \cdots+ \sigma_{n+2} \omega_{j_{n+2}}
$$
where $\omega_k = \sqrt{k^2 + \rho}$ are the frequencies of \eqref{N1}.
\end{definition}

\begin{remark}
The modulus of resonance $\Omega_{j,\sigma}$ had already appeared in the proof of Lemma \ref{lem:inv_phi}.
\end{remark}

Then we define a projection associated with terms we remove thanks to the change of variable. 
\begin{definition}{($\gamma-$resonant Hamiltonian and associated projection $\Pi_\gamma$)}
Let $K\geq 1$, $\gamma >0$, $n\geq 1$ and $Q\in \mathscr{H}^{n+2}$. We define $\Pi_\gamma Q\in \mathscr{H}^{n+2}$ by 
$$
\forall j \in \mathcal{N}_K^{n+2},  \forall \sigma \in \{-1,1\}^{n+2}, \quad (\Pi_{\gamma} Q )_j^\sigma = \mathds{1}_{|\Omega_{j,\sigma}|\geq \gamma}  Q_j^\sigma.
$$
If $\Pi_\gamma Q =0$, we say that $Q$ is $\gamma-$resonant.
\end{definition}

\subsection{The Birkhoff normal form theorem}
Now, we present the Birkhoff normal form theorem we use in this paper.
 \begin{theorem}\label{N102}
Let $r \geq 1, s\geq 1/2, \gamma \in (0,1), C>0$ and $Y = Y_1+\cdots+Y_r$ be a polynomial such that for all $i\in \{1,\cdots,r\}$, $Y_i \in \mathscr{H}^{i+2}$ and satisfies $\| Y_i \|_{\mathscr{H}} \leq C \gamma^{-i+1}$. \\
There exist
\begin{itemize}
\item a polynomial $\chi = \chi_1 + \cdots +\chi_r$ such that for all $i\in \{1,\cdots,r\}$, $\chi_i\in \mathscr{H}^{i+2}$ satisfies ${\Vert \chi_i \Vert}_{\mathscr{H}} \lesssim_{i,C} \gamma^{-i},$
\item a polynomial $Q = Q_1 + \cdots +Q_r$ such that for all $i\in \{1,\cdots,r\}$, $Q_i\in \mathscr{H}^{i+2}$ is $\gamma-$resonant and satisfies ${\Vert Q_i \Vert}_{\mathscr{H}} \lesssim_{i,C} \gamma^{-i+1},$
\item  a constant $\varepsilon_2 \gtrsim_{r,C,r} \gamma$ satisfying $\varepsilon_2 \leq \varepsilon_1$ where $\varepsilon_1>0$ is the constant given\footnote{applied with $Z=0$.} by Proposition {\ref{prop:flow}} to ensure the existence of $\Phi_{\chi}^{-1}$,
\end{itemize}
such that for all $u\in \mathbb{C}^{\mathcal{N}_K}$ satisfying $\| u\|_{H^s}\leq 2\varepsilon_2$, we have
\begin{equation}\label{decomposition}
 \bigg(T + \sum\limits_{i=1}^r Y_i\bigg)\circ \Phi_{\chi}^1(u) = (T+Q+R)(u)
  \end{equation}
  where the remainder term $R : \mathbb{C}^{\mathcal{N}_K} \cap B_{H^s}(0,2\varepsilon_2) \to \mathbb{R}$ is a smooth function satisfying
\[ {\Vert\nabla R(u)\Vert}_{H^s} \lesssim_{r,s,C} \gamma^{-r}{\Vert u\Vert}_{H^{s}}^{r+2}.\] 
 \end{theorem}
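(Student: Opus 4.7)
The plan is to build the components $\chi_1, \ldots, \chi_r$ of $\chi$ by induction on $n \in \{1, \ldots, r\}$, solving at each step a cohomological equation for $\mathrm{ad}_T$ that ensures the degree-$(n+2)$ contribution to $(T + Y) \circ \Phi_\chi^1$ becomes $\gamma$-resonant. Then the remainder $R$, which will consist of terms of total degree at least $r+3$ in $u$, is controlled via the vector-field estimate of Proposition \ref{vectorfield}. Once the uniform bound $\|\chi_i\|_{\mathscr{H}} \lesssim_{i,C} \gamma^{-i}$ is obtained, Proposition \ref{prop:flow} (applied with $Z = 0$) directly provides the existence of $\Phi_\chi^1$ on a ball $B_{H^s}(0, 2\varepsilon_2)$ with $\varepsilon_2 \gtrsim_{r,C} \gamma$.

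\textbf{Degree-by-degree identification.} Following the Lie-series strategy already used in the proof of Proposition \ref{prop:rewrite}, I would start from the truncated expansion
\[
(T + Y) \circ \Phi_\chi^1 = \sum_{k=0}^{M} \frac{(-1)^k}{k!} \mathrm{ad}_\chi^k(T+Y) + \mathcal{R}_M,
\]
substitute $\chi = \chi_1 + \cdots + \chi_r$ and $Y = Y_1 + \cdots + Y_r$, use multilinearity of $\mathrm{ad}$, and collect contributions by degree in $u$. A simple degree count (each $\chi_i$ or $Y_i$ adds $i+2$, each bracket subtracts $2$, and $T$ contributes $2$) shows that the component of degree $n+2$, for $1 \leq n \leq r$, can be written as
\[
N_n = Y_n + \{T, \chi_n\} + F_n,
\]
where $F_n \in \mathscr{H}^{n+2}$ is a finite explicit linear combination of iterated Poisson brackets $\mathrm{ad}_{\chi_{i_1}} \cdots \mathrm{ad}_{\chi_{i_k}}(A)$ with $A \in \{T\} \cup \{Y_{j_0}\}$, $k \geq 1$, all $i_\ell \geq 1$, all $i_\ell < n$, and $i_1 + \cdots + i_k\, (+\, j_0) = n$. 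In particular, $F_n$ depends only on $\chi_1, \ldots, \chi_{n-1}$ and $Y_1, \ldots, Y_{n-1}$. Setting $W_n := Y_n + F_n$, I would then solve the cohomological equation $\{T, \chi_n\} + \Pi_\gamma W_n = 0$ by means of Lemma \ref{poissonquad}, namely
\[
(\chi_n)_j^\sigma := - (i\Omega_{j,\sigma})^{-1} (W_n)_j^\sigma\, \mathds{1}_{|\Omega_{j,\sigma}| \geq \gamma}, \qquad Q_n := (I - \Pi_\gamma) W_n = N_n,
\]
so that $\|\chi_n\|_{\mathscr{H}} \leq \gamma^{-1} \|W_n\|_{\mathscr{H}}$ and $Q_n$ is automatically $\gamma$-resonant.

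\textbf{Inductive bounds on the norms.} The key identity feeding the induction is that, for each previously constructed $\chi_j$, one has $\{T, \chi_j\} = -\Pi_\gamma W_j$ by construction. This converts any innermost bracket of the form $\mathrm{ad}_{\chi_j}(T) = -\{T, \chi_j\} = \Pi_\gamma W_j$ into an element of $\mathscr{H}^{j+2}$ of norm $\lesssim_{j,C} \gamma^{-j+1}$, rather than the naive bound $\|\chi_j\|_{\mathscr{H}} \cdot \|T\|_{\mathscr{H}}$ which would blow up with $K$. Combining this substitution with Proposition \ref{1209} for the outer brackets and with the inductive hypotheses $\|\chi_\ell\|_{\mathscr{H}} \lesssim_{\ell,C} \gamma^{-\ell}$ and $\|W_\ell\|_{\mathscr{H}} \lesssim_{\ell,C} \gamma^{-\ell+1}$ for $\ell < n$, a direct induction on $n$ yields $\|W_n\|_{\mathscr{H}} \lesssim_{n,C} \gamma^{-n+1}$, hence $\|Q_n\|_{\mathscr{H}} \lesssim_{n,C} \gamma^{-n+1}$ and $\|\chi_n\|_{\mathscr{H}} \lesssim_{n,C} \gamma^{-n}$, as required.

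\textbf{Remainder estimate and main obstacle.} With $R := (T + Y) \circ \Phi_\chi^1 - (T + Q)$, the construction guarantees that on $B_{H^s}(0, 2\varepsilon_2)$, $R$ is a sum of (i) all degree-$\geq (r+3)$ components of the truncated Lie series, plus (ii) the truncation remainder $\mathcal{R}_M$ for $M$ taken large enough, whose uniform convergence to $0$ on the ball is justified exactly as in the proof of Proposition \ref{prop:rewrite}. For part (i), each contribution of degree $m+2$ with $m \geq r+1$ has $\mathscr{H}$-norm $\lesssim_{r,C} \gamma^{-m+1}$ by the same induction as above; applying the vector field estimate of Proposition \ref{vectorfield} and summing over $m \geq r+1$ (using $\|u\|_{H^s} \leq 2\varepsilon_2 \lesssim \gamma$ to make the geometric series in $\gamma^{-1}\|u\|_{H^s}$ converge) gives $\|\nabla R(u)\|_{H^s} \lesssim_{r,s,C} \gamma^{-r}\|u\|_{H^s}^{r+2}$, as announced. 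The main obstacle is the bookkeeping of the combinatorial factors and powers of $\gamma^{-1}$ through the iterated brackets: keeping the optimal power $\gamma^{-m+1}$ at degree $m+2$ relies systematically on the identity $\{T, \chi_j\} = -\Pi_\gamma W_j$ to save one factor of $\gamma$ each time $T$ appears as the innermost argument of a chain of brackets, and on the degree-counting constraints to cap the total power at $\gamma^{-r}$ in the remainder $R$.
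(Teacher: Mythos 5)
Your proposal is correct and follows essentially the same route as the paper's own proof: Lie-series expansion of the conjugated Hamiltonian, identification of the degree-$(n+2)$ component in the triangular form $\{T,\chi_n\}+W_n$, explicit solution of the cohomological equation via the small-divisor cutoff $\mathds{1}_{|\Omega_{j,\sigma}|\geq\gamma}$ (Lemma \ref{poissonquad}), and the crucial substitution $\mathrm{ad}_{\chi_j}T = \pm\Pi_\gamma W_j$ to bypass the otherwise unbounded $T$-factor, before closing the remainder estimate with Proposition \ref{vectorfield} and $\|u\|_{H^s}\lesssim\gamma$. The only cosmetic difference is the sign convention (you expand $\mathcal{H}\circ\Phi_\chi^1$ with $(-1)^k$ where the paper writes $\mathcal{H}\circ\Phi_\chi^{-1}$ without it, and your $W_n$ separates off the $Y_n$ term from what the paper calls $F_n$), which affects nothing.
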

\begin{proof} This kind of Birkhoff normal form theorem is by now standard for Hamiltonian PDEs, we refer for example the reader to Theorem 2.15 in \cite{bernier:hal-03604980} and Theorem 2.12 in \cite{bernier:hal-04090717}. Moreover its proof is similar to the one we did to perform the backward error analysis of the Lie splitting in the previous section. Nevertheless, for completeness, we recall the outline of the proof.

First, we set $\mathcal{H} = T + Y_1+\cdots +Y_r$ and, as we did in the proof of Proposition \ref{prop:rewrite}, we perform the Taylor expansion of $\mathcal{H}\circ \Phi_\chi^{-1}$ where $\chi$ is an unknown polynomial that will be determined later. We get that
$$
\mathcal{H} \circ\Phi_\chi^{-1} = \sum_{k\geq 0} \frac1{k!} \mathrm{ad}_{\chi}^k \mathcal{H} .
$$
Expanding $\mathcal{H} $ and $\chi$, we get that
$$
\mathcal{H} \circ\Phi_\chi^{-1} = T+\sum_{\ell\geq 1} Q_\ell
$$
where $Q_\ell \in \mathscr{H}^{\ell+2}$ is the homogeneous polynomial defined by\footnote{note that $k$ is implicitly a summation index in these sums.}
\begin{align*}
Q_{\ell} &= \sum_{m_1+\cdots+m_k = \ell} \frac{1}{k!}\mathrm{ad}_{\chi_{m_1}} \cdots \mathrm{ad}_{\chi_{m_k}}T + \sum_{i+m_1+\cdots +m_k=\ell }\frac{1}{k!}\mathrm{ad}_{\chi_{m_1}} \cdots \mathrm{ad}_{\chi_{m_k}} Y_i\\
&=  \{\chi_{\ell}, T\}+\underbrace{\sum_{\substack{m_1+\cdots+m_k = \ell \\ k\geq 2}} \frac{1}{k!}\mathrm{ad}_{\chi_{m_1}} \cdots \mathrm{ad}_{\chi_{m_k}}T + \sum_{\substack{i+m_1+\cdots +m_k=\ell }}\frac{1}{k!}\mathrm{ad}_{\chi_{m_1}} \cdots \mathrm{ad}_{\chi_{m_k}} Y_i}_{:=F_{\ell}}
\end{align*}
and the implicit convention that $\chi_{\ell} = 0$ if $\ell >r$.

\medskip

To get the decomposition, it suffices to set
$$
R = \sum_{\ell > r} F_\ell
$$
and to require that for all $\ell \in \{1,\cdots,r\}$, $\Pi_{\gamma} Q_\ell =0$. Unfortunately, it provides a system of equations which is underdetermined with respect to $\chi$. So we impose extra conditions on $\chi$. Thus, the set of cohomological equations we consider writes
\begin{equation}
\forall \ell \in\{1,\cdots,r\}, \quad \Pi_\gamma Q_\ell =0 \quad \mathrm{and} \quad \Pi_\gamma \chi_\ell = \chi_\ell.
\end{equation}
which is equivalent to 
\begin{equation}
\label{eq:encore_un_systeme}
\forall \ell \in\{1,\cdots,r\}, \quad \{\chi_{\ell}, T\} = - \Pi_\gamma F_\ell
\end{equation}
with the extra condition that  $\Pi_0 \chi=\chi$.

\medskip 

This system of equations is very similar to the one we studied to perform the backward error analysis (i.e. \eqref{eq:the_system}). On the one hand it is triangular because $F_\ell$ depends only on $(\chi_k)_{k<\ell}$. On the other hand the implicit part of the equation is associated with an operator which is diagonal and so easy to invert. More precisely, thanks to Lemma \ref{poissonquad},  \eqref{eq:encore_un_systeme} rewrites in coordinates 
\begin{equation}
\label{eq:chi_tres_explicite}
 (\chi_{\ell})_j^{\sigma}= \begin{cases} \dfrac{({F}_{\ell})_j^{\sigma}}{i\Omega_{j,\sigma}} & \text{if } \vert \Omega_{j,\sigma}\vert \geq \gamma, \\
           0 & \text{otherwise}. \end{cases}
\end{equation}
Therefore, we design the polynomials $\chi_1,\cdots,\chi_r$ solving the system \eqref{eq:encore_un_systeme} by a direct induction.

\medskip

Then the main point is to prove by induction that for all $\ell\geq 1$
\begin{equation}
\label{eq:cequonprouvesurF}
\| F_\ell \|_{\mathscr{H}} \leq M_{(r+1)\wedge \ell}^\ell \, \gamma^{-\ell+1}
\end{equation}
where $(r+1)\wedge \ell = \min (r+1,\ell)$ and $M_1,\cdots,M_{r+1}$ is an increasing sequence of constants satisfying $C\leq M_1$. Note that by construction of $Q_\ell$, we have
$$
Q_\ell =(\mathrm{Id} - \Pi_{\gamma}) F_\ell, \quad \mathrm{and \ so} \quad \| Q_\ell \|_{\mathscr{H}} \leq \| F_\ell \|_{\mathscr{H}}.
$$
Similarly, note that, by construction of $\chi_\ell$ (see \eqref{eq:chi_tres_explicite}), we have $\| \chi_\ell \|_{\mathscr{H}} \leq \gamma^{-1} \| F_\ell \|_{\mathscr{H}}$. As a consequence, it suffices to prove the bound \eqref{eq:cequonprouvesurF} to get the expected estimates on $\chi$, $Q$ and $R$ (by applying the vector field estimate of Proposition \ref{vectorfield}).

\medskip

The proof of \eqref{eq:cequonprouvesurF} is similar to the one we did in Subsection \ref{sub:coho} to control the $K_n$ but is simpler because the sums defining $F_\ell$ are finite (contrary to those defining $K_\ell$). One difference is that, since in this section we do not assume any CFL condition, to estimate $\mathrm{ad}_{\chi_{m_1}} \cdots \mathrm{ad}_{\chi_{m_k}}T$, we use that $\chi$ solves the cohomological equation \eqref{eq:encore_un_systeme} and so that
$$
\mathrm{ad}_{\chi_{m_1}} \cdots \mathrm{ad}_{\chi_{m_k}}T = -\mathrm{ad}_{\chi_{m_1}} \cdots \mathrm{ad}_{\chi_{m_{k-1}}}  \Pi_\gamma F_\ell.
$$
We assume by induction that \eqref{eq:cequonprouvesurF} holds for $\ell<n$, and we recall that $\| \chi_\ell \|_{\mathscr{H}} \leq \gamma^{-1} \| F_\ell \|_{\mathscr{H}}$. Then, thanks to the Poisson bracket estimate of Proposition \ref{1209}, we get that\footnote{These estimates are simple but very rough (they produce huge constants). For refined estimates producing more reasonable constants, we refer for example to \cite{bernier:hal-03604980}.}
\begin{equation*}
\begin{split}
\| F_n \|_{\mathscr{H}} &\leq 2 \gamma^{-n+1} \sum_{\substack{m_1+\cdots +m_{k+1}=n   }}\frac{1}{k!} (n+2)^k (n\wedge r + 2)^k M_{(n-1)\wedge r}^{k+1} \\
&\leq 2^{n+1} \gamma^{-n+1} \sum_{k\geq 0}\frac{1}{k!} (n+2)^k (n\wedge r + 2)^k M_{(n-1)\wedge r}^{k+1}\\
& = 2^{n+1} \gamma^{-n+1}  M_{(n-1)\wedge r} (e^ {M_{(n-1)\wedge r} (n\wedge r + 2)} )^{n+2} \\
&\leq \gamma^{-n+1} (2 e^ {M_{(n-1)\wedge r} (n\wedge r + 2)} )^{4n} 
\end{split}
\end{equation*}
which concludes the derivation by setting for $j\in\{1,\cdots,r\}$, $M_{j+1} =  (2 e^ {M_{j} (j + 3)} )^{4} $ (and $M_1 =C$).

\end{proof}

\subsection{Non-resonance conditions} In order to deduce dynamical corollaries  of the Birkhoff normal form theorem, we need non resonance conditions.
\begin{proposition}[Lemma 2.5 of \cite{bernier2021birkhoff}] \label{prop:non_res} There exists an increasing sequence of nonnegative exponents $(\alpha_{r_*})\in \mathbb{R}_+^{\mathbb{N}^*}$ such that for almost all $\rho>0$, all $r_*\geq 1$, all $\ell \in (\mathbb{Z}^*)^{r_*}$ there exists a constant $C_{\ell,\rho}\in (0,1)$ such that for all $j\in \mathbb{N}^{r_*}$ with $ j_1 < \cdots < j_{r_*} $, we have 
\begin{equation}
\label{eq:est_pt_div}
\abs{\ell_1\omega_{j_1}+ \cdots + \ell_{r_*}\omega_{j_{r_*}}} > C_{\ell,\rho} \hspace{0.1cm}  \langle j_{1}\rangle^{-\alpha_{r_*}}
\end{equation}
where $\omega_j = \sqrt{j^2 + \rho}$ are the frequencies of the Klein--Gordon equation.
\end{proposition}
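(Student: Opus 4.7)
The statement is a small-divisors estimate for the Klein--Gordon frequencies, and the plan is to combine a transversality argument with a sublevel-set measure estimate and a Borel--Cantelli argument, exploiting the near-integer behavior of the $\omega_j$ so that the loss falls only on $\langle j_1\rangle$. Fix $r_* \geq 1$, $\ell \in (\mathbb{Z}^*)^{r_*}$, and view
$$
F_{\ell,j}(\rho) := \sum_{i=1}^{r_*} \ell_i \sqrt{j_i^2 + \rho}
$$
as a real-analytic function of $\rho$ on any bounded interval $I \subset (0,\infty)$. I would first split the analysis according to the integer $n := \sum_i \ell_i j_i$: using the expansion $\omega_j = j + O(j^{-1})$ uniformly on $I$, we have $F_{\ell,j}(\rho) - n = O(\langle j_1 \rangle^{-1})$, so if $n \neq 0$ and $j_1$ is large enough (depending only on $|\ell|$), then $|F_{\ell,j}(\rho)| \geq 1/2$ automatically; the finitely many remaining tuples are absorbed by shrinking $C_{\ell,\rho}$. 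The genuine difficulty is therefore the resonant case $n=0$.

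The heart of the proof is a transversality estimate in this case. Computing $\partial_\rho^k \omega_j = c_k (j^2+\rho)^{1/2-k}$ with nonzero universal constants $c_k$, one forms the $r_* \times r_*$ matrix $M_{k,i} := \partial_\rho^{k-1}\omega_{j_i}$ and factors its determinant as a diagonal matrix in $k$ times a diagonal matrix in $\omega_{j_i}$ times a Vandermonde in the variables $\omega_{j_i}^{-2}$. Since the $j_i$ are distinct positive integers, this Vandermonde is nonzero; combining this with the cancellation imposed by $n=0$ yields, for each such $(\ell,j)$, an index $k \leq r_*$ and an exponent $\alpha'_{r_*}$ depending only on $r_*$ such that
$$
\sup_{\rho \in I} |\partial_\rho^k F_{\ell,j}(\rho)| \gtrsim_{\ell, I} \langle j_1 \rangle^{-\alpha'_{r_*}}.
$$
A classical sublevel-set lemma for $C^{r_*}$ functions whose $k$-th derivative is bounded below then gives
$$
\big| \{\rho \in I : |F_{\ell,j}(\rho)| < \eta\} \big| \lesssim_{\ell, I} \eta^{1/r_*} \langle j_1\rangle^{\alpha'_{r_*}/r_*}.
$$

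Choosing $\alpha_{r_*}$ large enough (and setting $\eta := \langle j_1\rangle^{-\alpha_{r_*}}$) so that the resulting exponent of $\langle j_1\rangle$ dominates the combinatorial volume of admissible tuples sharing the first index $j_1$ and satisfying the constraint $n=0$, the total measure of the union of bad sets over all such $j$ becomes summable geometrically in a dyadic parameter. A Borel--Cantelli argument then produces a full-measure set of masses $\rho \in I$ on which \eqref{eq:est_pt_div} holds with a positive constant $C_{\ell, \rho}$; taking countable intersections over $\ell$ and $r_*$, and exhausting $(0,\infty)$ by intervals $I$, preserves full measure, and the sequence $(\alpha_{r_*})$ can be made increasing by passing to running maxima. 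The main obstacle is the Vandermonde analysis under the resonance constraint $n=0$: one must verify that the derivative lower bound loses only powers of $\langle j_1\rangle$ and not of the larger indices, which crucially relies on the near-integer asymptotics $\omega_j - j = O(j^{-1})$ that effectively reduces the genuinely irrational part of $F_{\ell,j}$ to a contribution governed by the smallest frequency.
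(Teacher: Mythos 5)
The paper does not prove Proposition~\ref{prop:non_res}; it cites it verbatim as Lemma~2.5 of \cite{bernier2021birkhoff}, so there is no internal proof to compare against. Your sketch assembles the standard ingredients for such small-divisor estimates (reduction via $\omega_j - j = O(j^{-1})$ to the resonant case $n:=\sum_i\ell_ij_i=0$, transversality in $\rho$, a sublevel-set measure estimate, Borel--Cantelli over $j$, countable intersection over $\ell$ and $r_*$), and these are indeed the right building blocks. However, the central technical claim is left unproved, and it is precisely the part that is not routine.

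Concretely, you assert that the Vandermonde determinant argument yields $\sup_{\rho\in I}|\partial_\rho^k F_{\ell,j}(\rho)|\gtrsim_{\ell,I}\langle j_1\rangle^{-\alpha'_{r_*}}$, i.e.\ a loss confined to $j_1$. This is not what the plain Vandermonde gives. Writing $x_i:=\omega_{j_i}^{-2}$, the matrix $M_{k,i}=\partial_\rho^{k-1}\omega_{j_i}$ has $\det M = \bigl(\prod_k c_{k-1}\bigr)\bigl(\prod_i\omega_{j_i}\bigr)\prod_{i<i'}(x_i-x_{i'})$, and since the $j_i$ are distinct positive integers, $|x_i-x_{i'}|\sim (j_i^2 j_{i'})^{-1}$; the Vandermonde factor therefore contributes negative powers of \emph{every} index $j_2,\dots,j_{r_*}$, and the associated Cramer or operator-norm estimate produces a transversality bound that degrades with $j_{r_*}$, the \emph{largest} index. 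This is exactly the classical Bambusi estimate, which leads to a non-resonance condition with the small divisor controlled by the third largest index, not by $j_1$. Nothing in your argument explains how the cancellation $n=0$ or the near-integer asymptotics repair this loss: this $\langle j_1\rangle$-localization is precisely the novelty of Bernier--Gr\'ebert's lemma, and their proof has a genuinely different structure from a plain Vandermonde computation. Saying ``one must verify'' at that point is naming the gap, not filling it.

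A secondary gap is the Borel--Cantelli count: for each fixed $j_1$ the constraint $n=0$ still leaves infinitely many admissible tuples $(j_2,\dots,j_{r_*})$, while your sublevel-set bound $\lesssim\eta^{1/r_*}\langle j_1\rangle^{\alpha'_{r_*}/r_*}$ does not decay in $j_2,\dots,j_{r_*}$ at all; the assertion that the union of bad sets is ``summable geometrically in a dyadic parameter'' is not substantiated and cannot be without a transversality estimate that also decays in the larger indices. This again feeds back to the missing step above.
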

\begin{remark} When $\rho$ belongs to the full measure set given by Proposition \ref{prop:non_res}, we say that the frequencies are \emph{strongly non-resonant}.
\end{remark}
The non resonance condition \eqref{eq:est_pt_div} has been recently introduced by the second author and B. Gr\'ebert in \cite{bernier2021birkhoff}. It is well suited to deal with non-smooth solutions. It is stronger than the classical non resonance condition (used to deal with smooth solutions), which only require a polynomial control of the small divisors by the third largest index (i.e. $\langle j_{1}\rangle^{-\alpha_{r^*}}$ is replaced by $\langle j_{r^*-2}\rangle^{-\alpha_{r^*}}$ in \eqref{eq:est_pt_div}; see e.g. \cite{Bam03}). It is to prove that the frequencies are strongly non-resonant that we use that the frequencies converge to the integer (i.e. \eqref{eq:main_lim}).

\medskip

We will use the non-resonance condition in the following way.
\begin{lemma} \label{lem:si_ca_res_ca_com}Let $\rho>0$ belong to the full measure set given by Proposition \ref{prop:non_res}. Let $r\geq 1$, $k\in \mathcal{N}_K$ and set
$$
\gamma = \kappa_r \langle k \rangle^{-\alpha_r} \quad \mathrm{with} \quad \kappa = \min_{r_*\leq r } \min_{|\ell_1|+ \cdots+|\ell_{r_*}|\leq r} C_{\ell,\rho}.
$$
If $Q \in \mathscr{H}^{n+2}$ for some $n\leq r$ is a $\gamma-$resonant polynomial (i.e. $\Pi_{\gamma} Q =0$) then $Q$ commutes with the super-action $J_k$, i.e.
$$
\{J_k,Q\} = 0.
$$
\end{lemma}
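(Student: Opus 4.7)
The plan is to prove the contrapositive: assuming $\{J_k,Q\}\ne 0$ for some $Q\in\mathscr{H}^{n+2}$ with $n\le r$, I will exhibit an index $(j,\sigma)$ with $Q_j^{\sigma}\ne 0$ and $|\Omega_{j,\sigma}|\ge\gamma$, so that $\Pi_\gamma Q\ne 0$, contradicting the $\gamma$-resonance hypothesis.

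The first step is to view $J_k$ as a quadratic Hamiltonian of the diagonal form $\sum_j \lambda_j |u_j|^2$ needed to apply Lemma \ref{poissonquad}. By \eqref{eq:les_supers_actions}, this holds with $\lambda_j=\tfrac12\mathds{1}_{|j|=|k|}$ when $|k|<K/2$ (and $\lambda_j=\mathds{1}_{j=-K/2}$ at the Nyquist index). Lemma \ref{poissonquad} then yields the coefficient-wise identity
\[
\{Q,J_k\}_j^{\sigma} \; \propto \; a(j,\sigma)\, Q_j^{\sigma},\qquad a(j,\sigma) := \sum_{i\,:\,|j_i|=|k|}\sigma_i.
\]
So the assumption $\{J_k,Q\}\ne 0$ forces the existence of $(j,\sigma)$ with $Q_j^{\sigma}\ne 0$ and $a(j,\sigma)\ne 0$; in particular, $|k|$ appears among $\{|j_1|,\ldots,|j_{n+2}|\}$.

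The second step is to collapse $\Omega_{j,\sigma}$ into a sum over \emph{distinct non-negative} indices using the parity $\omega_m=\omega_{-m}$:
\[
\Omega_{j,\sigma}=\sum_{m\in S} a_m\,\omega_m,\quad a_m:=\sum_{i\,:\,|j_i|=m}\sigma_i,\quad S:=\{m\ge 0\,:\,a_m\ne 0\}.
\]
Writing $S=\{m_1<\cdots<m_{r_*}\}$, one has $r_*\le n+2\le r+2$, $\sum_{m\in S}|a_m|\le n+2$, and (by Step 1) $|k|\in S$, so $m_1\le|k|$. I would then apply the strong non-resonance estimate \eqref{eq:est_pt_div} of Proposition \ref{prop:non_res} to the vector $\ell=(a_{m_1},\ldots,a_{m_{r_*}})\in(\mathbb{Z}^*)^{r_*}$ and the increasing integers $m_1<\cdots<m_{r_*}$ in $\mathbb{N}$:
\[
|\Omega_{j,\sigma}|>C_{\ell,\rho}\,\langle m_1\rangle^{-\alpha_{r_*}}\;\ge\;\kappa_r\,\langle k\rangle^{-\alpha_r}=\gamma,
\]
where the middle inequality uses $m_1\le|k|$, the monotonicity of the sequence $(\alpha_{r_*})$, and the definition of $\kappa_r$ as the infimum of the $C_{\ell,\rho}$'s over all admissible ranges of $r_*$ and $\ell$. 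This gives $(\Pi_\gamma Q)_j^{\sigma}=Q_j^{\sigma}\ne 0$, the desired contradiction.

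All the analytic work is already packaged in Proposition \ref{prop:non_res}; the only obstacle here is really bookkeeping the constants after collapsing—checking that $r_*\le r+2$ and $\sum|a_m|\le r+2$ fall within the range over which $\kappa_r$ is a uniform lower bound, and that replacing the true small-divisor exponent $\alpha_{r_*}$ by the coarser $\alpha_r$ is legitimate thanks to the monotonicity of $(\alpha_{r_*})$. Once this is set up, the proof is a one-line application of the strong non-resonance estimate combined with the explicit action-Hamiltonian Poisson bracket from Lemma \ref{poissonquad}.
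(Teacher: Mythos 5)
Your proposal is correct and takes essentially the same approach as the paper: use Lemma \ref{poissonquad} to reduce $\{J_k,Q\}_j^\sigma$ to a multiple of $Q_j^\sigma$ by the signed multiplicity of $|k|$ among the indices, collapse $\Omega_{j,\sigma}$ over distinct nonnegative indices using the parity of $\omega$, and invoke the strong non-resonance estimate of Proposition \ref{prop:non_res} (the key point being that $|k|$ appears in the collapsed sum, so the smallest index is $\leq |k|$). The only difference is cosmetic (you argue by contraposition while the paper argues directly), and your bookkeeping $r_*\le n+2\le r+2$, $\sum|a_m|\le n+2$ is actually a touch more careful than the paper's stated $\sum_b |m_b|\le r$; both are harmless once the constant $\kappa$ is taken over the correct range.
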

\begin{proof}
 We set for all $b\in \mathcal{N}_K$,  $j\in \mathcal{N}_K^{n+2}$ and $\sigma\in\{-1,1\}^{n+2}$
$$
m_b(j,\sigma) :=  \sum_{a=1}^{n+2} \sigma_a \mathds{1}_{|j_a|=|b|}.
$$
We note that with these notations (since $\omega$ is even)
\begin{equation}
\label{eq:reec8om}
\Omega_{j,\sigma} = \sum_{ \substack{b\in \mathcal{N}_K\\ b\leq 0} }  \omega_b m_b(j,\sigma) \quad \mathrm{and} \quad \sum_{\substack{b\in \mathcal{N}_K \\ b\leq 0}} |m_b(j,\sigma)|\leq r .
\end{equation}
Then, we note that, thanks to Lemma \ref{poissonquad}, the coefficients of $\{J_k,Q\}$ are
\begin{equation}
\label{eq:tout_simple}
\{J_k,Q\}_j^\sigma = -i Q_j^\sigma m_k(j,\sigma). 
\end{equation}
Recalling that $\Omega_{j,\sigma}$ writes \eqref{eq:reec8om}, if $ m_k(j,\sigma)\neq 0$ then the term associated with the index $b=k$ is non zero and so, thanks to the non resonance condition \eqref{eq:est_pt_div}, it implies that
$$
|\Omega_{j,\sigma}| > \kappa \, \langle k \rangle^{-\alpha_{r_*}} \geq \kappa \,\langle k \rangle^{-\alpha_{r}} = \gamma.
$$
But since $Q$ is $\gamma$-resonant it implies that $Q_j^\sigma=0$. In other words, we have proven that $m_k(j,\sigma)\neq 0$ implies $Q_j^\sigma=0$. As a consequence of \eqref{eq:tout_simple}, it means that $\{J_k,Q\} = 0$.
\end{proof}

  \section{Dynamical consequences}\label{z4}
  In this section, we aim at proving the almost preservation of the harmonic actions (i.e. Theorem \ref{thm:main}). As in Section \ref{sec:back}, we fix $r\geq 1$, $\rho>0$, $\delta\in(0,\pi)$ and we assume that the numerical parameters $h,K$ satisfy the CFL condition \eqref{eq:CFL}, i.e.
\begin{equation*}
(r+2) \, h\, \omega_{K/2} \leq 2\pi -\delta.
\end{equation*}
We also fix an arbitrary constant $\Upsilon>1$. We are going to prove the almost preservation of the super-actions whenever $nh\leq \Upsilon \| u^0 \|_{H^{1/2}}^{-r}$. It is useful to introduce such a constant because we have to prove that the  super-actions are preserved whenever $nh\leq  (\|p^0\|_{L^2}+ \|q^0\|_{H^1})^{-r}$ and we only have $\|p^0\|_{L^2}+ \|q^0\|_{H^1} \sim_\rho \| u^0 \|_{H^{1/2}}$ (i.e. instead of an equality).

From subsection \ref{z1}, we will add a genericity assumption on the mass $\rho$ (i.e. we will assume later that $\rho$ belongs to the full measure set given by Proposition \ref{prop:non_res}).

\medskip

As in the introduction, we denote by $\Phi_{\mathrm{num}}^h$ the numerical flow of the mollified impulse method, i.e.
$$
\Phi_{\mathrm{num}}^h := \Phi^{h/2}_{V}\circ \Phi^{h}_{T} \circ \Phi^{h/2}_{V}.
$$
 We denote by $P=P_1+\cdots+P_r$ the Taylor expansion of $V$ (defined by \eqref{eq:def_V}) up to order $r+2$ as defined by equation \eqref{eq:def_Pn} in Lemma \ref{lem:def_Taylor_P}. We denote by $H_h$ the modified Hamiltonian that we constructed explicitly in Section \ref{sec:back} and that is defined by \eqref{eq:def_H_h}.

  \subsection{Almost conservation of the energy and stability of the zero solution in the energy space}

In this subsection we prove that the modified Hamiltonian  $H_h$ is almost preserved by the numerical flow and that numerical solutions initially of size $\varepsilon$ in $H^{1/2}$ remain of size $\varepsilon$ for times of order $\varepsilon^{-r}$.

We begin with two technical lemmas proving basic estimates on the modified Hamiltonian $H_h$.
\begin{lemma}\label{ellipticity}
There exist $\Lambda_{\rho}>1$ depending only on $\rho$  and $\varepsilon_{4}\gtrsim_{r,\rho,\delta} 1$ such that for all $u\in \mathbb{C}^{\mathcal{N}_K}$ satisfying ${\Vert u \Vert}_{H^{1/2}} \leq \varepsilon_{4}$, we have
\begin{equation}
\label{eq:ellipti}
   \Lambda_{\rho}^{-1}  {\Vert u \Vert}_{H^{1/2}}^2 \leq H_h(u) \leq {\Lambda_{\rho}} {\Vert u \Vert}_{H^{1/2}}^2 .
  \end{equation}
\end{lemma}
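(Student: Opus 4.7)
The plan is to treat $H_h$ as a small perturbation of $T$ in the energy space and exploit the ellipticity of $T$.

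First, I would decompose
\[
H_h = h^{-1}B_0(h) + h^{-1}\sum_{i=1}^r B_i(h) = T + h^{-1}\sum_{i=1}^r B_i(h),
\]
using that $B_0 = hT$. The quadratic part $T(u) = \frac12\sum_{k\in\mathcal{N}_K}\omega_k|u_k|^2$ is elliptic in $H^{1/2}$: since $\omega_k = \sqrt{k^2+\rho}$, the ratio $\omega_k/\langle k\rangle$ lies between $\min(1,\sqrt{\rho})$ and $\max(1,\sqrt{\rho})$, so there exists $\Lambda_\rho^{(0)}>1$ depending only on $\rho$ such that
\[
(\Lambda_\rho^{(0)})^{-1}\|u\|_{H^{1/2}}^2 \leq T(u) \leq \Lambda_\rho^{(0)}\|u\|_{H^{1/2}}^2.
\]

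Next, I would control the nonlinear part $h^{-1}\sum_{i=1}^r B_i(h)$ pointwise. By Proposition \ref{prop:sol_coho}, for each $i\in\{1,\ldots,r\}$ the polynomial $t\mapsto B_i(t)\in\mathscr{H}^{i+2}$ vanishes at $t=0$ and satisfies $\|\partial_t B_i\|_{L^\infty(0,h;\mathscr{H}^{i+2})}\lesssim_{g,\delta,r,\rho}1$. Hence
\[
\|B_i(h)\|_{\mathscr{H}} \leq h\,\|\partial_t B_i\|_{L^\infty(0,h;\mathscr{H}^{i+2})} \lesssim_{g,\delta,r,\rho} h.
\]
Applying the pointwise evaluation estimate of Corollary \ref{cor:ev_pol} then gives
\[
\bigl|h^{-1}B_i(h)(u)\bigr| \lesssim_{r} h^{-1}\|B_i(h)\|_{\mathscr{H}}\,\|u\|_{H^{1/2}}^{i+2} \lesssim_{g,\delta,r,\rho} \|u\|_{H^{1/2}}^{i+2},
\]
and summing over $i\in\{1,\ldots,r\}$, for any $u$ with $\|u\|_{H^{1/2}}\leq 1$,
\[
\bigl|H_h(u)-T(u)\bigr| \leq C_{g,\delta,r,\rho}\,\|u\|_{H^{1/2}}^3.
\]

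Finally, I would absorb this cubic error into the quadratic ellipticity of $T$. Choosing $\varepsilon_4 \gtrsim_{r,\rho,\delta} 1$ small enough so that $C_{g,\delta,r,\rho}\,\varepsilon_4 \leq \tfrac12(\Lambda_\rho^{(0)})^{-1}$, the inequality $\|u\|_{H^{1/2}}\leq \varepsilon_4$ yields
\[
\tfrac12(\Lambda_\rho^{(0)})^{-1}\|u\|_{H^{1/2}}^2 \leq H_h(u) \leq \bigl(\Lambda_\rho^{(0)} + C_{g,\delta,r,\rho}\varepsilon_4\bigr)\|u\|_{H^{1/2}}^2,
\]
which is \eqref{eq:ellipti} after redefining $\Lambda_\rho$ as the maximum of $2\Lambda_\rho^{(0)}$ and $\Lambda_\rho^{(0)}+C_{g,\delta,r,\rho}\varepsilon_4$. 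There is no real obstacle in this argument; the only care needed is to check that the implicit constants in Proposition \ref{prop:sol_coho} and Corollary \ref{cor:ev_pol} depend only on $r,\rho,\delta$ (and the fixed nonlinearity $g$ and mollifier $\phi$), so that $\varepsilon_4$ and $\Lambda_\rho$ inherit the claimed dependence.
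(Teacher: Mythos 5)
Your proof is correct and follows essentially the same argument as the paper: decompose $H_h = T + h^{-1}\sum_{i\geq 1}B_i(h)$, use the ellipticity of $T$ in $H^{1/2}$ coming from $\omega_j\sim_\rho\langle j\rangle$, bound the perturbation pointwise by $C\|u\|_{H^{1/2}}^3$ via Corollary \ref{cor:ev_pol} and the uniform estimates of Proposition \ref{prop:sol_coho}, and absorb it for $\|u\|_{H^{1/2}}\le\varepsilon_4$. One small remark: since $C_{g,\delta,r,\rho}\varepsilon_4\le\tfrac12(\Lambda_\rho^{(0)})^{-1}<\Lambda_\rho^{(0)}$, the maximum you take at the end is just $2\Lambda_\rho^{(0)}$, so $\Lambda_\rho$ does depend only on $\rho$ as required, though it is worth noting this explicitly so that the $r,\delta$-dependent constant does not contaminate $\Lambda_\rho$.
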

\begin{proof}
We note that there exists $\Lambda_\rho>1$ such that for all $j\in \mathbb{Z}$
$$
2(\Lambda_\rho)^{-1} \langle j \rangle \leq\omega_j \leq \frac12 \Lambda_\rho \langle j \rangle.
$$
It follows of the definition of $T$ that for all $u\in \mathbb{C}^{\mathcal{N}_K}$,
$$
2(\Lambda_\rho)^{-1} \| u \|_{H^{1/2}}^2 \leq T(u) \leq  \frac12 \Lambda_\rho \| u \|_{H^{1/2}}^2.
$$
Then, recalling that $H_h = T + h^{-1}B(h)$, we have
\begin{equation}
\label{eq:pfff}
T(u) - h^{-1}(B(h))(u) \leq | H_h(u) |\leq T(u) + h^{-1}(B(h))(u).
\end{equation}
Finally, applying the pointwise estimate of Corollary \ref{cor:ev_pol} with the uniform estimates on $B(h)$ given by Proposition \ref{prop:sol_coho}, we get that, provided that $\| u\|_{H^{1/2}}$ is small enough,
$$
|(B(h))(u)|\lesssim_{r,\rho,\delta} \| u\|_{H^{1/2}}^3.
$$
Plugging this last estimate in \eqref{eq:pfff}, we get, as expected, provided that $\| u\|_{H^{1/2}}$ is small enough, the estimate \eqref{eq:ellipti}.
\end{proof}
\begin{lemma} \label{lem:accroissement}
For all $C>0$ and all  $u,v\in \mathbb{C}^{\mathcal{N}_K}$ satisfying ${\Vert u \Vert}_{H^{1/2}} \leq C$, we have
  $$
   |\mathrm{d}H_h(u)(v)| \lesssim_{C,\rho,r,\delta} \| u\|_{H^{1/2}}\| v\|_{H^{1/2}}.
   $$
\end{lemma}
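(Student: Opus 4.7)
The plan is to decompose $H_h = T + h^{-1}B(h)$ with $B(h) = \sum_{i=1}^{r} B_i(h)$, where each $B_i(h) \in \mathscr{H}^{i+2}$ is given by Proposition \ref{prop:sol_coho}, and then estimate the differential of each piece separately. By linearity,
\[
\mathrm{d}H_h(u)(v) = \mathrm{d}T(u)(v) + h^{-1}\sum_{i=1}^{r} \mathrm{d}B_i(h)(u)(v).
\]

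For the quadratic part $T(u) = \tfrac{1}{2}\sum_k \omega_k |u_k|^2$, a direct computation gives $\mathrm{d}T(u)(v) = \Re \sum_k \omega_k u_k \overline{v_k}$, and Cauchy--Schwarz together with $\omega_k \sim_{\rho} \langle k\rangle$ yields $|\mathrm{d}T(u)(v)| \lesssim_{\rho} \|u\|_{H^{1/2}} \|v\|_{H^{1/2}}$, which is even stronger than what is needed (no dependence on $C$).

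For each nonlinear piece, I would write $\mathrm{d}B_i(h)(u)(v) = (\nabla B_i(h)(u), v)_{L^2}$ and use Cauchy--Schwarz to bound $|\mathrm{d}B_i(h)(u)(v)| \leq \|\nabla B_i(h)(u)\|_{L^2} \|v\|_{L^2} \leq \|\nabla B_i(h)(u)\|_{H^{1/2}} \|v\|_{H^{1/2}}$. The vector field estimate of Proposition \ref{vectorfield} applied with $s = 1/2$ gives
\[
\|\nabla B_i(h)(u)\|_{H^{1/2}} \lesssim_{i} \|B_i(h)\|_{\mathscr{H}}\, \|u\|_{H^{1/2}}^{i+1}.
\]
Since $B_i$ vanishes at $t=0$ and depends polynomially on $t$, Proposition \ref{prop:sol_coho} yields $\|B_i(h)\|_{\mathscr{H}} \leq h\,\|\partial_t B_i\|_{L^\infty(0,h;\mathscr{H}^{i+2})} \lesssim_{r,\rho,\delta} h$. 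The factor of $h$ cancels the prefactor $h^{-1}$ exactly, leaving
\[
h^{-1}|\mathrm{d}B_i(h)(u)(v)| \lesssim_{i,r,\rho,\delta} \|u\|_{H^{1/2}}^{i+1}\,\|v\|_{H^{1/2}}.
\]

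Finally, under the assumption $\|u\|_{H^{1/2}} \leq C$, we factor out one power of $\|u\|_{H^{1/2}}$ and absorb the remaining $\|u\|_{H^{1/2}}^{i} \leq C^i$ into the implicit constant, and then sum over $i \in \{1,\dots,r\}$ to obtain the claimed estimate. No serious obstacle arises: the result is essentially a direct consequence of the vector field estimate of Proposition \ref{vectorfield}, the uniform bound on $B(h)$ coming from backward error analysis (Proposition \ref{prop:sol_coho}), and the trivial linear estimate for the quadratic part $T$.
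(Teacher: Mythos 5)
Your proof is correct and follows essentially the same route as the paper's: decompose $H_h = T + h^{-1}B(h)$, treat the quadratic part by Cauchy--Schwarz, and bound $\mathrm{d}(B(h))(u)(v)$ via Proposition~\ref{vectorfield} together with the uniform bound $\|B_i(h)\|_{\mathscr{H}} \lesssim_{r,\rho,\delta} h$ from Proposition~\ref{prop:sol_coho}, which cancels the prefactor $h^{-1}$. The only cosmetic difference is that you estimate each $B_i$ separately before summing, whereas the paper states the estimate directly for $B(h)$; both conclude by absorbing the extra powers of $\|u\|_{H^{1/2}} \leq C$ into the implicit constant.
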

\begin{proof} Recalling that $H_h = T + h^{-1}B(h)$, we have
$$
   |\mathrm{d}H_h(u)(v)|\leq |\mathrm{d}T(u)(v)| + h^{-1} |\mathrm{d}(B(h))(u)(v)|.
$$
On the one hand, recalling that $\omega_k =\sqrt{k^2+\rho}$, by Cauchy--Schwarz we have
$$
|\mathrm{d}T(u)(v)| = \Re \sum_{k\in \mathcal{N}_k} \omega_k u_k \overline{v}_k \lesssim_{\rho} \| u\|_{H^{1/2}}\| v\|_{H^{1/2}}.
$$
On the other hand, applying the vector field estimates of Proposition \ref{vectorfield} to the polynomials $B_n(h)$ enjoying the uniform bounds given by Proposition \ref{prop:sol_coho}, we get that
$$
|\mathrm{d}(B(h))(u)(v)| = |(\nabla (B(h))(u),v)_{L^2}| \leq \|\nabla (B(h))(u)\|_{H^{1/2}} \| v\|_{H^{1/2}} \lesssim_{C,\rho,r,\delta} h\| u\|_{H^{1/2}}^2 \|v\|_{H^{1/2}}.
$$

\end{proof}

First, we deduce from the backward error analysis that while the numerical solution remains of size $\varepsilon$ in $H^{1/2}$, the modified Hamiltonian $H_h$ is almost preserved by the numerical flow.  
\begin{proposition}\label{conservation} Let $C>1$. There exists $\varepsilon_3\gtrsim_{r,\rho,\delta,C}1$ such that if $\varepsilon\leq \varepsilon_3$, $T_\varepsilon\leq \Upsilon\varepsilon^{-r}$ and $(u^n)_{n\geq 0}$ is a solution of the fully discretized nonlinear Klein--Gordon equation (i.e. for all $n\geq 0$, $u^n \in \mathbb{C}^{\mathcal{N}_K}$ satisfies $u^{n+1} = \Phi_{\mathrm{num}}^h(u^n)$) which remains of size  $C\varepsilon$ in $H^{1/2}$ for times smaller than or equal to $T_\varepsilon$, i.e. such that
\begin{equation}
\label{eq:smallness_assumption}
nh \leq T_\varepsilon \implies \| u^n \|_{H^{1/2}} \leq C\varepsilon
\end{equation}
then $(H_h(u^n))_{n\geq 0} $ is almost constant in the sense that
\[ nh \leq T_\varepsilon \implies  \abs{H_h(u^n) - H_h(u^0)} \lesssim_{C,r,\rho,\delta,\Upsilon} \varepsilon^{3}. \] 
\end{proposition}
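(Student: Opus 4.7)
The plan is to bound the per-step variation of $H_h$ via the backward error analysis proved in Section~\ref{sec:back}, then telescope over the $\lesssim \Upsilon\varepsilon^{-r}/h$ iterations. As indicated in Subsection~\ref{sub:proof}, I would first reduce to the Lie splitting $\Phi_V^h\circ\Phi_T^h$, so that Proposition~\ref{main2} applies directly to $\Phi_{\mathrm{num}}^h$. Applied at each iterate $u^n$ with $nh\leq T_\varepsilon$, under the smallness assumption \eqref{eq:smallness_assumption}, it yields
$$\|\Phi_{\mathrm{num}}^h(u^n) - \Phi_{H_h}^h(u^n)\|_{H^{1/2}} \lesssim_{C,r,\rho,\delta} h\,\varepsilon^{r+2}.$$

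The key observation is that $H_h$ is exactly preserved along its own Hamiltonian flow, so that $H_h(\Phi_{H_h}^h(u^n)) = H_h(u^n)$ and the one-step defect rewrites as
$$H_h(u^{n+1}) - H_h(u^n) = H_h(\Phi_{\mathrm{num}}^h(u^n)) - H_h(\Phi_{H_h}^h(u^n)).$$
I would then estimate the right-hand side by integrating $\mathrm{d}H_h$ along the segment joining the two points. For the mean value inequality to deliver a useful bound via Lemma~\ref{lem:accroissement}, both endpoints must lie in a ball of $H^{1/2}$ of radius $\lesssim \varepsilon$: the bound for $u^{n+1}=\Phi_{\mathrm{num}}^h(u^n)$ is exactly the hypothesis \eqref{eq:smallness_assumption} at index $n+1$, while the bound for $\Phi_{H_h}^h(u^n)$ follows from Proposition~\ref{prop:flow} applied with $Z=T$ and $\chi = h^{-1}(B(h)-hT)$, using the uniform estimates on $B(h)$ supplied by Proposition~\ref{prop:sol_coho}. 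Combining this convexity argument with Lemma~\ref{lem:accroissement} produces the per-step estimate
$$|H_h(u^{n+1}) - H_h(u^n)| \lesssim_{C,\rho,r,\delta} \varepsilon\cdot h\,\varepsilon^{r+2} = h\,\varepsilon^{r+3}.$$

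A telescoping summation over the at most $T_\varepsilon/h \leq \Upsilon\,\varepsilon^{-r}/h$ iterations then yields
$$|H_h(u^n) - H_h(u^0)| \leq \frac{T_\varepsilon}{h}\cdot h\,\varepsilon^{r+3} \lesssim_{\Upsilon,C,\rho,r,\delta} \varepsilon^3,$$
which is the claimed bound. I do not expect any serious obstacle here: once Proposition~\ref{main2} is in hand, the argument is entirely standard. The only bookkeeping issue is to choose $\varepsilon_3$ small enough so that $\Phi_{H_h}^h(u^n)$ remains within the domain of validity of both Lemma~\ref{lem:accroissement} and Proposition~\ref{main2} throughout the evolution — but since those estimates are uniform in $K$ and $h$ and depend only on $r,\rho,\delta,C$, it suffices to pick $\varepsilon_3$ smaller than all the implicit smallness thresholds appearing in the cited results.
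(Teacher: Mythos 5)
Your overall strategy (one-step defect via backward error analysis, mean value inequality with Lemma~\ref{lem:accroissement}, telescoping over $\lesssim\Upsilon\varepsilon^{-r}/h$ steps) is the right one and matches the paper. But there is a genuine gap in how you handle the Strang-to-Lie reduction, and as written your one-step estimate is false.

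You assert
\[
\|\Phi_{\mathrm{num}}^h(u^n)-\Phi_{H_h}^h(u^n)\|_{H^{1/2}}\lesssim h\,\varepsilon^{r+2},
\]
citing Proposition~\ref{main2}. However, Proposition~\ref{main2} compares $\Phi_{H_h}^h$ to the \emph{Lie} splitting $\Phi_V^h\circ\Phi_T^h$, not to $\Phi_{\mathrm{num}}^h=\Phi_V^{h/2}\circ\Phi_T^h\circ\Phi_V^{h/2}$. The modified Hamiltonian $H_h$ was built by backward error analysis for the Lie method, and the difference between the Strang and Lie steps is of size $O(h\varepsilon^2)$ (one can check this directly using $\Phi_V^h=\mathrm{Id}-ih\nabla V$), not $O(h\varepsilon^{r+2})$. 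If you feed this $O(h\varepsilon^2)$ defect into your mean-value/telescoping argument you only get $|H_h(u^n)-H_h(u^0)|\lesssim T_\varepsilon\,\varepsilon^3\lesssim\Upsilon\,\varepsilon^{3-r}$, which is useless for $r\geq1$. Saying ``I would first reduce to the Lie splitting'' does not fix this unless you actually change the iterates: you must set $v^n:=\Phi_V^{h/2}(u^n)$, observe that $v^{n+1}=\Phi_V^h\circ\Phi_T^h(v^n)$, run your per-step/telescoping estimate on $(v^n)$ (where Proposition~\ref{main2} genuinely applies and the per-step defect is $O(h\varepsilon^{r+3})$ in $H_h$), and then pay once at the end for $|H_h(u^n)-H_h(v^n)|$, using $\|u^n-v^n\|_{H^{1/2}}\lesssim h\varepsilon^2$ from $\Phi_V^{h/2}=\mathrm{Id}-\tfrac{ih}{2}\nabla V$ and Corollary~\ref{cor:corico}, together with Lemma~\ref{lem:accroissement}, to get an $O(h\varepsilon^3)$ error that is not accumulated. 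This is precisely the content of Step~1 of the paper's proof; your proposal skips it and therefore, as stated, does not close.
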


\begin{proof} The proof is divided in $2$ steps. First, we reduce the problem to the case where the numerical flow is given by the Lie splitting. Then, we prove the result for the Lie splitting. 

\medskip

\noindent \underline{\emph{Step 1 : reduction to the Lie splitting.}} First, we use that the Strang splitting is conjugated to the Lie one. More precisely, we set 
$$
v^n = \Phi_{V}^{h/2}(u^n).
$$
 It follows that the time evolution of $v$ is given by the Lie splitting, i.e.
$$
v^{n+1} = \Phi_{V}^{h/2} \circ \Phi^{h/2}_{V}\circ \Phi^{h}_{T} \circ \Phi^{h/2}_{V} (u^n) = \Phi_{V}^{h} \circ \Phi^{h}_{T} (v^n).
$$
Moreover, $v^n$ is close to $u^n$. Indeed, as explained in the proof of Proposition \ref{main2}, we have 
$$
\Phi_{V}^{h/2} = \mathrm{Id} - \frac{h}2  i \nabla V,
$$
and so using the estimate on $\nabla V$ proved in Lemma \ref{cor:corico}\footnote{in the special case $r=0$}, we have
\begin{equation}
\label{eq:jai_faim}
\| u^n - v^n \|_{H^{1/2}} = \frac{h}2 \| \nabla V(u^n) \|_{H^{1/2}} \lesssim_{\rho} h \varepsilon^2
\end{equation}
whenever $nh \leq T_\varepsilon$. It follows that, provided that $\varepsilon_3$ is small enough, 
$$
nh \leq T_\varepsilon \implies \| v^n \|_{H^{1/2}} \leq 2C\varepsilon.
$$
Therefore, applying the mean value inequality thank to the uniform estimate of Lemma \ref{lem:accroissement}, we deduce that, whenever $nh \leq T_\varepsilon$, 
\begin{equation*}
\begin{split}
 \abs{H_h(u^n) - H_h(u^0)} & \leq  \abs{H_h(v^n) - H_h(v^0)} +  \abs{H_h(v^n) - H_h(u^n)}  +  \abs{H_h(v^0) - H_h(u^0)}  \\
 &\lesssim_{\rho,r,C}  \abs{H_h(v^n) - H_h(v^0)} + h \varepsilon^3.
\end{split}
\end{equation*}
As a consequence, to conclude, it suffices to prove that $\abs{H_h(v^n) - H_h(v^0)}\lesssim_{\rho,r,C}  \varepsilon^3$.

\medskip

\noindent \underline{\emph{Step 2 : Proof for the Lie splitting.}} Let $n\geq 1$ be such that $n h \leq T_\varepsilon$. Since $H_h$ is a constant of motion of $\Phi_{H_h}$, we have
$$
H_h(v^{n}) - H_h(v^{n-1}) = H_h( \Phi_{V}^{h} \circ \Phi^{h}_{T} (v^{n-1}) ) - H_h(\Phi_{H_h}^h(v^{n-1})).
$$
Therefore, applying the mean value inequality thanks to the uniform estimate of Lemma \ref{lem:accroissement}, we get that
$$
\abs{H_h(v^{n}) - H_h(v^{n-1})} \lesssim_{C,\rho,r,\delta} \varepsilon \| \Phi_{V}^{h} \circ \Phi^{h}_{T} (v^{n-1})  - \Phi_{H_h}^h(v^{n-1}) \|_{H^{1/2}}.
$$
Thus, applying the backward error analysis estimate given by Proposition \ref{main2}, provided that $\varepsilon_3$ is small enough, we have
$$
\abs{H_h(v^{n}) - H_h(v^{n-1})} \lesssim_{C,\rho,r,\delta} h\varepsilon^{r+3} .
$$
This estimate being uniform in $n$, it follows that
$$
\abs{H_h(v^{n}) - H_h(v^{0})} \leq \sum_{k=1}^{n}\abs{H_h(v^{k}) - H_h(v^{k-1})}   \lesssim_{C,\rho,r,\delta} nh\varepsilon^{r+3}.
$$
We conclude by using that $n h \leq T_\varepsilon \leq \Upsilon \varepsilon^{-r}$.
\end{proof}

As a corollary, we deduce that the numerical solutions remains of size $\varepsilon$ for times of order $\varepsilon^{-r}$.
\begin{corollary} \label{cor:reste_petit}
There exist $\Lambda_\rho>1$ depending only on $\rho$ and $\varepsilon_5\gtrsim_{r,\rho,\delta,\Upsilon} 1$ such that if $(u^n)_{n\geq 0}$ is a solution of the fully discretized nonlinear Klein--Gordon equation, i.e.
$$
\forall n\geq 0, \ u^n \in \mathbb{C}^{\mathcal{N}_K} \quad \mathrm{and} \quad u^{n+1} = \Phi_{\mathrm{num}}^h(u^n),
$$
such that $ \| u^0 \|_{H^{1/2}} \leq \varepsilon_5$, then while $nh \leq \Upsilon\varepsilon^{-r}$, it satisfies
$$
\| u^n\|_{H^{1/2}} \leq 2\Lambda_{\rho} \| u^0 \|_{H^{1/2}}.
$$
\end{corollary}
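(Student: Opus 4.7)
The proof is a discrete bootstrap built on the two estimates just established: the ellipticity of $H_h$ (Lemma \ref{ellipticity}) converts bounds on $H_h$ into bounds on the $H^{1/2}$-norm and back, while the almost conservation of $H_h$ (Proposition \ref{conservation}) supplies the strict improvement that drives the bootstrap. Throughout, $\Lambda_\rho$ denotes the constant from Lemma \ref{ellipticity}.

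Concretely, set $\varepsilon := \|u^0\|_{H^{1/2}}$, fix the bootstrap constant $C := 2\Lambda_\rho$, and define
$$
N := \max\bigl\{\, n \ge 0 \ : \ nh \le \Upsilon\varepsilon^{-r} \ \text{and} \ \|u^k\|_{H^{1/2}} \le C\varepsilon \ \text{for all} \ 0 \le k \le n \,\bigr\}.
$$
I argue by contradiction, assuming that $(N+1)h \le \Upsilon\varepsilon^{-r}$; by maximality of $N$, this forces $\|u^{N+1}\|_{H^{1/2}} > C\varepsilon$. Applying Proposition \ref{conservation} with constant $C$ and $T_\varepsilon := Nh$ (legitimate since the bootstrap bound holds up to index $N$) yields $|H_h(u^N) - H_h(u^0)| \le C_1\varepsilon^3$ for some $C_1 = C_1(r,\rho,\delta,\Upsilon)$. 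Combined with the two bounds $H_h(u^0) \le \Lambda_\rho\varepsilon^2$ and $\|u^N\|_{H^{1/2}}^2 \le \Lambda_\rho H_h(u^N)$ coming from Lemma \ref{ellipticity}, I obtain
$$
\|u^N\|_{H^{1/2}}^2 \le \Lambda_\rho^2\varepsilon^2 + \Lambda_\rho C_1 \varepsilon^3, \qquad \text{hence} \qquad \|u^N\|_{H^{1/2}} \le \Lambda_\rho\varepsilon + C_2\varepsilon^2
$$
for $\varepsilon$ small enough in terms of $r,\rho,\delta,\Upsilon$.

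To reach a contradiction I must upgrade this bound on $\|u^N\|$ to a bound on $\|u^{N+1}\|$ that strictly beats $C\varepsilon = 2\Lambda_\rho\varepsilon$. For this I exploit that $\Phi_T^h$ is an $H^{1/2}$-isometry and that $V$ depends only on $\Re u$, which — exactly as in the proof of Proposition \ref{main2} — gives the explicit identity $\Phi_V^{h/2}(u) = u - i(h/2)\nabla V(u)$. Corollary \ref{cor:corico} applied with $r=0$ (so $P=0$) delivers $\|\nabla V(u)\|_{H^{1/2}} \lesssim_\rho \|u\|_{H^{1/2}}^2$ whenever $\|u\|_{H^{1/2}} \le 1$, which uses crucially that $g$ vanishes to order $2$ at the origin. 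Applying this to each of the three factors composing $\Phi_{\mathrm{num}}^h = \Phi_V^{h/2}\circ \Phi_T^h \circ \Phi_V^{h/2}$ gives
$$
\|u^{N+1}\|_{H^{1/2}} \le \|u^N\|_{H^{1/2}} + C_3 h\varepsilon^2 \le \Lambda_\rho \varepsilon + C_4\varepsilon^2 \ < \ 2\Lambda_\rho \varepsilon
$$
provided $\varepsilon \le \varepsilon_5(r,\rho,\delta,\Upsilon)$ is taken small enough. This contradicts $\|u^{N+1}\|_{H^{1/2}} > C\varepsilon$, so the bootstrap cannot fail on the prescribed time window, which proves the corollary.

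The only delicate point is handling the extra discrete step from $u^N$ to $u^{N+1}$: Proposition \ref{conservation} only controls the iterates whose index lies in the bootstrap window, so the standard ellipticity/conservation dichotomy alone cannot bound the first iterate that escapes. The fact that $\Phi_V^{h/2}$ is an \emph{explicit} quadratic perturbation of the identity near the origin is what closes this gap without any further abstract argument.
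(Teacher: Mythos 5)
Your proof is correct and uses essentially the same ingredients in essentially the same way as the paper: a bootstrap/induction on $n$, the ellipticity of $H_h$ (Lemma \ref{ellipticity}), the almost conservation of $H_h$ (Proposition \ref{conservation}), and the explicit one-step estimate $\|\Phi^{h/2}_V(u)-u\|_{H^{1/2}}\lesssim_\rho h\|u\|_{H^{1/2}}^2$ to pass from the last controlled iterate to the next one. The only difference is cosmetic: you run the bootstrap by contradiction and invoke conservation/ellipticity before the one-step bound, while the paper runs a direct induction and uses the one-step bound first (to upgrade the induction hypothesis at index $n-1$ to a preliminary $3\Lambda_\rho\varepsilon$ bound at index $n$) and the conservation argument second; both orderings close for $\varepsilon$ small.
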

\begin{proof} We define $\Lambda_\rho$ as the constant given by Lemma \ref{ellipticity}, we set $\varepsilon=  \| u^0 \|_{H^{1/2}}$ and we proceed by induction on $n$. More precisely, we assume that 
$$
\forall m<n, \quad \| u^m\|_{H^{1/2}} \leq 4\Lambda_{\rho} \varepsilon.
$$
 Then, proceeding as in the proof of Proposition \ref{conservation}, we get that, provided that $\varepsilon_5$ is small enough, 
$$
\| u^n\|_{H^{1/2}} - \| u^{n-1}\|_{H^{1/2}} \lesssim_{\rho} h \varepsilon^2
$$
and so that
$$
\| u^n \|_{H^{1/2}} \leq  3\Lambda_{\rho} \varepsilon.
$$
Then applying Lemma \ref{ellipticity} and Proposition \ref{conservation} with $C=3\Lambda_{\rho}$ and $T_\varepsilon = nh$, provided that $\varepsilon_5$ is small enough we have, as expected,
\begin{equation*}
\begin{split}
\| u^{n} \|_{H^{1/2}}^2 \leq \Lambda_{\rho} H_h(u^n) \leq  \Lambda_{\rho} H_h(u^0) + \Lambda_{\rho} (H_h(u^n)- H_h(u^0))
\leq \Lambda_{\rho}^2 \varepsilon^2+ C_{\rho,r,\delta,\Upsilon} \varepsilon^3 \leq (2\Lambda_\rho \varepsilon)^2
\end{split}
\end{equation*}
where $C_{\rho,r,\delta,\Upsilon} $ is a constant depending only on $\rho,r,\Upsilon$ and $\delta$.

\end{proof}

   \subsection{Almost preservation of the harmonic actions}\label{z1}
In this subsection, we prove Theorem \ref{thm:main}.

\medskip

\noindent \underline{\emph{Step $1$ : Setting and reduction to the Lie splitting.}} From now, we assume moreover that the mass $\rho>0$ belongs to the set of full measure set given by Proposition \ref{prop:non_res} ensuring that the Klein--Gordon frequencies are strongly non-resonant. We consider a solution $(u_n)_{n\geq 0}$ to the fully discretized nonlinear Klein--Gordon equation, i.e. for all $n\geq 0$, $u^n \in \mathbb{C}^{\mathcal{N}_K}$ and $u^{n+1} = \Phi_{\mathrm{num}}^h(u^n)$, whose initial datum $u^0$ is of size $\varepsilon$ in $H^{1/2}$, i.e. $\varepsilon = \| u^0\|_{H^{1/2}}$.

\medskip

As a consequence of Corollary \ref{cor:reste_petit}, we know that, provided that $\varepsilon$ is small enough, we have
$$
\| u^n\|_{H^{1/2}} \leq 2\Lambda_{\rho} \varepsilon \quad \mathrm{whenever} \quad nh\leq \Upsilon\varepsilon^{-r}.
$$
As in the proof of Proposition \ref{conservation}, in order to consider the Lie splitting instead of the Strang one, we set, for all $n\geq 0$,
$$
v^n = \Phi_{V}^{h/2}(u^n),
$$
and we recall that
$$
v^{n+1} = \Phi_{V}^{h} \circ \Phi^{h}_{T} (v^n).
$$
Moreover, recalling \eqref{eq:jai_faim}, provided that $\varepsilon$ is small enough, we have
$$
\| u^n - v^n \|_{H^{1/2}} \lesssim_{\rho} \varepsilon^2 \quad \mathrm{whenever} \quad nh\leq \Upsilon\varepsilon^{-r}.
$$
It follows that, provided that $\varepsilon$ is small enough, while $nh\leq \Upsilon\varepsilon^{-r}$, we have
$$
\| v^n\|_{H^{1/2}} \leq 3\Lambda_{\rho} \varepsilon 
$$
and for all $k \in \mathcal{N}_K$,
$$
|J_k(u^n) - J_k(u^0)| \lesssim_{\rho} |J_k(v^n) - J_k(v^0)| + \varepsilon^3.
$$
As a consequence, it suffices to prove the existence of a constant $\beta_r$ depending only on $r$ such that 
\begin{equation}
\label{eq:cequonveutsurv}
|J_k(v^n) - J_k(v^0)|\lesssim_{\rho,\delta,r} \langle k \rangle^{\beta_r} \varepsilon^3.
\end{equation}

\medskip

\noindent \underline{\emph{Step $2$ : An upper bound on $k$.}}
From now, we fix $k\in \mathcal{N}_K$ and, as in Lemma \ref{lem:si_ca_res_ca_com}, we set
$$
\gamma := \kappa \langle k \rangle^{-\alpha_r} \quad \mathrm{with} \quad \kappa := \min_{r_*\leq r } \min_{|\ell_1|+ \cdots+|\ell_{r_*}|\leq r} C_{\ell,\rho}
$$
where $C_{\ell,\rho}$ and $\alpha_r$ are the constants given by the fact that $\rho$ is non resonant (see by Proposition \ref{prop:non_res}). In order to apply the change of variable given by the Birkhoff normal form theorem, it is convenient to have the extra assumption that $\gamma \geq \varepsilon^{1/2}$. So let us prove now that the case $\gamma < \varepsilon^{1/2}$ is quite obvious. 
Indeed, if $\gamma < \varepsilon^{1/2}$ then  
$$
\varepsilon^{-1} \leq \gamma^{-2} = \kappa^{-2} \langle k \rangle^{2\alpha_r}
$$
and so  while $nh\leq \Upsilon\varepsilon^{-r}$, we have
$$
|J_k(v^n) - J_k(v^0)|\leq \| v^n\|_{L^2}^2 + \| v^0\|_{L^2}^2 \leq 18 \Lambda_{\rho}^2 \varepsilon^2 \leq (18 \Lambda_{\rho}^2\kappa^{-2})  \langle k \rangle^{2\alpha_r} \varepsilon^3. 
$$
As a consequence, from now, we assume without loss of generality that
\begin{equation}
\label{eq:lhypo_bonus}
\gamma \geq \varepsilon^{1/2}.
\end{equation}

\medskip

\noindent \underline{\emph{Step $3$ : Application of the Birkhoff normal form theorem.}} We apply Theorem \ref{N102} with $s=1/2$, $Y = h^{-1}B(h)$ and $C>0$ the constant given by the uniform estimate of Proposition \ref{prop:sol_coho}. We get the Hamiltonians $Q,R,\chi$ satisfying the estimates given in the statement of the Birkhoff normal form theorem.  We set, when $nh\leq \Upsilon \varepsilon^{-r}$,
$$
z^n := \Phi_\chi^{-1}(v^n).
$$
Note that, by Proposition \ref{prop:flow}, thanks to the lower bound \eqref{eq:lhypo_bonus}, this definition makes sense, provided that $\varepsilon$ is small enough (we proved that the flow of $-\chi$ at time $1$ exists on a ball of radius proportional to $\gamma$). Then, using that, by Proposition \ref{prop:flow}, $\Phi_\chi^{-1}$ is close to the identity, we have
$$
\| z^n - v^n\|_{H^{1/2}} \lesssim_{\rho,r,\delta} \delta^{-1} \varepsilon^2 \sim_{\rho,r,\delta} \langle k \rangle^{\alpha_r} \varepsilon^2.
$$
As a consequence, while $nh\leq \Upsilon\varepsilon^{-r}$, we have
$$
\| z^n\|_{H^{1/2}} \leq 4\Lambda_{\rho} \varepsilon 
$$
and
$$
|J_k(v^n) - J_k(v^0)| \lesssim_{\rho,r,\delta}  |J_k(z^n) - J_k(z^0)| +  \langle k \rangle^{\alpha_r} \varepsilon^3.
$$
Thus, to conclude (i.e. to prove \eqref{eq:cequonveutsurv}), it suffices to prove the existence of a constant $\beta_r$ depending only on $r$ such that 
\begin{equation}
\label{eq:cequonveutsurz}
|J_k(z^n) - J_k(z^0)|\lesssim_{\rho,\delta,r} \langle k \rangle^{\beta_r} \varepsilon^3.
\end{equation}

\medskip

\noindent \underline{\emph{Step $4$ : Conclusion.}} Now, we fix $n\geq 0$ such that $(n+1)h\leq \Upsilon \varepsilon^{-r}$. We have
\begin{equation*}
\begin{split}
&|J_k(z^{n+1}) - J_k(z^{n})| = |J_k\circ \Phi_\chi^{-1} (\Phi_V^h \circ \Phi_T^h (v^n)) - J_k(z^{n})| \\
\leq & |\underbrace{J_k\circ \Phi_\chi^{-1} (\Phi_V^h \circ \Phi_T^h (v^n)) - J_k\circ \Phi_\chi^{-1} (\Phi_{H_h}^h (v^n))}_{=:E_{\mathrm{back}}}| +|\underbrace{   J_k\circ \Phi_\chi^{-1} (\Phi_{H_h}^h (v^n)) - J_k(z^n) }_{=:E_{\mathrm{BNF}}}|.
\end{split}
\end{equation*}
On the one hand, by the mean value inequality (using the bound \eqref{eq:deriv_flow} on $\mathrm{d}\Phi_\chi^{-1}$ given by Proposition \ref{prop:flow}), provided that $\varepsilon$ is small enough, we have
$$
|E_{\mathrm{back}}| \lesssim_{r,\rho,\delta} \varepsilon \| \Phi_V^h \circ \Phi_T^h (v^n) -\Phi_{H_h}^h (v^n) \|_{H^{1/2}}.
$$
and so, applying the backward error analysis estimate of Proposition \ref{main2}, we get
$$
|E_{\mathrm{back}}| \lesssim_{\delta,\rho,r} h\varepsilon^{r+3}.
$$
On the other hand, since $\Phi_\chi^1$ is symplectic\footnote{because it is an Hamiltonian flow.}, we have 
$$
\Phi_\chi^{-1} (\Phi_{H_h}^h (v^n)) = \Phi_{H_h\circ \Phi_\chi^1}^h (z^n).
$$
It follows that
$$
E_{\mathrm{BNF}} = \int_0^h \partial_t J_k(\Phi_{H_h\circ \Phi_\chi^1}^t (z^n))  \mathrm{d}t =  \int_0^h \{ H_h\circ \Phi_\chi^1,J_k \} \circ \Phi_{H_h\circ \Phi_\chi^1}^t(z^n) \, \mathrm{d}t.
$$
Recalling that, by the Birkhoff normal form theorem,
$$
 H_h\circ \Phi_\chi^1 = T + Q_1 + \cdots + Q_r+ R
$$
where the polynomials $Q_1,\cdots,Q_r$ are $\gamma-$resonant, by Lemma \ref{lem:si_ca_res_ca_com}, thanks to the non-resonance condition and by definition of $\gamma$, we have\footnote{the fact that $\{T,J_k\}=0$ is a just direct calculation.}
$$
\{ H_h\circ \Phi_\chi^1,J_k \} = \{ R , J_k\}.
$$
Therefore, using the estimate on $\nabla R$ given by Theorem \ref{N102} and estimating $\Phi_{H_h\circ \Phi_\chi^1}^t(z^n)$ by Proposition \ref{prop:flow}, provided that $\varepsilon$ is small enough, we have
\begin{equation*}
\begin{split}
|E_{\mathrm{BNF}}| &\leq \int_0^h \|  \Phi_{H_h\circ \Phi_\chi^1}^t(z^n) \|_{H^{1/2}} \|  \nabla R(\Phi_{H_h\circ \Phi_\chi^1}^t(z^n) )\|_{H^{1/2}}  \, \mathrm{d}t \\
&\lesssim_{r,\rho,\delta} h \gamma^{-r}\varepsilon^{r+3} \sim_{r,\rho,\delta} h \langle k \rangle^{r\alpha_r}\varepsilon^{r+3} .
\end{split}
\end{equation*}
Finally, summing the error terms $E_{\mathrm{back}}$ and $E_{\mathrm{BNF}}$, we get \eqref{eq:cequonveutsurz}, i.e. if $nh \leq \Upsilon\varepsilon^{-r}$ then
$$
|J_k(z^{n}) - J_k(z^{0})|\lesssim_{\delta,\rho,r} n h \langle k \rangle^{r\alpha_r}\varepsilon^{r+3} \lesssim_{\delta,\rho,r,\Upsilon} \langle k \rangle^{r\alpha_r}\varepsilon^{3} .
$$

\section{Conclusion} 

We proved that, even at low regularity, the mollified impulse method preserves some important qualitative properties of the nonlinear Klein--Gordon equations. In particular, we proved that for small numerical solutions in the energy space, the energy and the harmonic actions are almost preserved for very long times. To prove it, first we performed the backward error analysis of the method (which directly provides the almost preservation of the energy) and, then, we put the modified equation in some kind of Birkhoff normal form. Finally, the uniform bound on the norm of the solution given by the almost preservation of the energy and the commutation property given by this normal form allowed us to deduce the almost preservation of the harmonic actions at low regularity.

\medskip

This approach  and the formalism we present is fairly robust. Adapting the dynamical consequences of the Birkhoff normal form (in the spirit of what is done in \cite{Bam03}), it would provide an alternative proof of high regularity results (such that \cite{zbMATH05323311}). We also expect it to be adaptable to other equations such as nonlinear Schr\"odinger equations or beam equations.

\medskip

Nevertheless, many questions remain open. In particular, we are very interested in the preservation of the high harmonic actions (i.e. $J_k$ with $k$ large).
Current approaches do not allow us to control their variations (for very long times) at low regularities. This contrasts with numerical observations, where these quantities seem particularly stable. We think that it would deserve in-depth numerical studies and further investigations.




 \begin{appendix}
\section{Appendix}\label{1001}

We prove a general formula about Hamiltonian systems depending on a parameter. This basic formula is well known (e.g. it is used in \cite{MR2895408,MR2811583}) but we did not find any complete proof in the literature. 
\begin{lemma} \label{lem:jolie_formule} Let $n\geq 1$, $J\in \mathbb{R}^{2n\times 2n}$ be a skew symmetric invertible matrix, $Q_{\alpha} \in \mathcal{C}^\infty_c(\mathbb{R}^{2n};\mathbb{R})$ be a family of smooth, compactly supported, real-valued functions indexed by $\alpha \in \mathbb{R}$ such that $(\alpha,x)\mapsto Q_\alpha(x) \in \mathcal{C}^\infty(\mathbb{R}^{2n}\times \mathbb{R};\mathbb{R})$. Denoting by $\Phi_{Q_\alpha}$ the Hamiltonian flow\footnote{note that since $Q_\alpha$ is finitely supported, this flow is globally well defined.} of $Q_\alpha$ (i.e. the flow of the equation $y' = J\nabla Q_\alpha(y)$), for all $\alpha \in \mathbb{R}$ and all $t\in \mathbb{R}$, we have
\begin{equation}
\label{eq:jolie_eq_pas_facile}
\partial_\alpha \Phi_{Q_\alpha}^t = \Big( J \nabla \int_0^t (\partial_{\alpha} Q_\alpha) \circ \Phi_{Q_\alpha}^{-\tau} \mathrm{d}\tau \Big) \circ \Phi_{Q_\alpha}^t.
\end{equation}
\end{lemma}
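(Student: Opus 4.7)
The plan is to show that both sides of \eqref{eq:jolie_eq_pas_facile} satisfy the same linear ODE in $t$ with the same initial condition at $t=0$. Since $Q_\alpha$ is smooth and compactly supported, the flow $\Phi_{Q_\alpha}^t$ is globally defined and smooth in all its arguments, so differentiating under integrals and with respect to parameters is automatic.

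First, differentiate the defining equation $\partial_t \Phi_{Q_\alpha}^t = (J\nabla Q_\alpha)\circ \Phi_{Q_\alpha}^t$ with respect to $\alpha$ to obtain, for $W^t := \partial_\alpha \Phi_{Q_\alpha}^t$, the linear inhomogeneous equation
\begin{equation*}
\partial_t W^t = J\nabla^2 Q_\alpha(\Phi_{Q_\alpha}^t) \, W^t + (J\nabla \partial_\alpha Q_\alpha)(\Phi_{Q_\alpha}^t), \qquad W^0 = 0.
\end{equation*}
The fundamental solution of the homogeneous linearised equation is precisely $D\Phi_{Q_\alpha}^{t-\tau}(\Phi_{Q_\alpha}^\tau(\cdot))$, obtained by differentiating the flow relation $\Phi_{Q_\alpha}^t = \Phi_{Q_\alpha}^{t-\tau}\circ \Phi_{Q_\alpha}^\tau$ in the spatial variable. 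Duhamel's formula then yields
\begin{equation*}
W^t(x) = \int_0^t D\Phi_{Q_\alpha}^{t-\tau}(\Phi_{Q_\alpha}^\tau(x))\cdot (J\nabla \partial_\alpha Q_\alpha)(\Phi_{Q_\alpha}^\tau(x))\, \mathrm{d}\tau.
\end{equation*}

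The key step is to recognise the integrand as a Hamiltonian vector field evaluated at the single point $\Phi_{Q_\alpha}^t(x)$. For this the plan is to use the symplecticity of the flow, i.e.\ $(D\Phi_{Q_\alpha}^s)^T J^{-1} D\Phi_{Q_\alpha}^s = J^{-1}$, which follows from $J^T=-J$ and the fact that $\Phi_{Q_\alpha}^s$ is a Hamiltonian flow. A short computation from $X_H = J\nabla H$ and the chain rule then gives the general identity
\begin{equation*}
D\Phi_{Q_\alpha}^s(y)\cdot J\nabla (H\circ \Phi_{Q_\alpha}^s)(y) = (J\nabla H)(\Phi_{Q_\alpha}^s(y)), \qquad \forall H\in C^\infty_c, \ s\in\mathbb{R},\ y\in\mathbb{R}^{2n}.
\end{equation*}
Applying this with $s=t-\tau$, $y=\Phi_{Q_\alpha}^\tau(x)$ and $H := \partial_\alpha Q_\alpha \circ \Phi_{Q_\alpha}^{-(t-\tau)}$ (so that $H\circ \Phi_{Q_\alpha}^{t-\tau} = \partial_\alpha Q_\alpha$ and $\Phi_{Q_\alpha}^{t-\tau}(y)=\Phi_{Q_\alpha}^t(x)$) rewrites the integrand as $\bigl(J\nabla (\partial_\alpha Q_\alpha\circ \Phi_{Q_\alpha}^{-(t-\tau)})\bigr)(\Phi_{Q_\alpha}^t(x))$.

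Finally, by linearity of $f\mapsto J\nabla f$, one may bring the integral inside $J\nabla$ and change variables $\tau'=t-\tau$ to conclude
\begin{equation*}
W^t(x) = \Bigl(J\nabla \int_0^t (\partial_\alpha Q_\alpha)\circ \Phi_{Q_\alpha}^{-\tau'}\, \mathrm{d}\tau'\Bigr)(\Phi_{Q_\alpha}^t(x)),
\end{equation*}
which is \eqref{eq:jolie_eq_pas_facile}. The main obstacle is isolating the symplectic pullback identity cleanly; once that is in hand the Duhamel computation runs without friction, and compactness of the support of $Q_\alpha$ removes any worry about the time of existence of the flow.
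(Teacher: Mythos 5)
Your proof is correct. It takes a closely related but genuinely distinguishable route from the paper. The paper argues on the ``dual'' side: it fixes a test observable $P$, considers the pullback $P\circ\Phi_{Q_\alpha}^t$, differentiates the scalar transport equation $\partial_t(P\circ\Phi_{Q_\alpha}^t)=\mathrm{ad}_{Q_\alpha}(P\circ\Phi_{Q_\alpha}^t)$ in $\alpha$, applies Duhamel, rewrites the integrand via symplecticity, and only at the end specializes $P$ to a linear form to recover the statement about the flow map by duality. You instead argue on the ``primal'' side: you differentiate the defining ODE for the flow map in $\alpha$ directly, obtain the (vector-valued) variational equation for $W^t=\partial_\alpha\Phi_{Q_\alpha}^t$, identify its propagator as $D\Phi_{Q_\alpha}^{t-\tau}(\Phi_{Q_\alpha}^\tau(\cdot))$ by differentiating the group law $\Phi^t=\Phi^{t-\tau}\circ\Phi^\tau$ in $x$, apply Duhamel, and then use the symplectic pullback identity $D\Phi^s\cdot J\nabla(H\circ\Phi^s)=(J\nabla H)\circ\Phi^s$ to collapse the integrand. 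Both proofs hinge on exactly the same two ingredients (Duhamel and the symplecticity of the flow), so they are really the same computation seen from two ends, but your version is the more standard ``variational equation'' approach and is arguably more self-contained since it never needs to introduce test observables or the duality step; the paper's version avoids ever writing down the matrix-valued variational equation explicitly, keeping everything scalar until the last line. Either is a complete and correct argument.
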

\begin{proof} For any $H\in \mathcal{C}^\infty_c(\mathbb{R}^{2n};\mathbb{R})$, we define the differential operator $\mathrm{ad}_H := J\nabla H \cdot \nabla$ acting on $C^\infty(\mathbb{R}^{2n};\mathbb{R})$. For any $P\in \mathcal{C}^\infty(\mathbb{R}^{2n};\mathbb{R})$, any $t,\alpha\in \mathbb{R}$, we set
$$
P_\alpha(t) := e^{t\mathrm{ad}_{Q_\alpha}} P := P\circ \Phi_{Q_\alpha}^t.
$$ 
Note that $(t,\alpha) \mapsto P_\alpha(t)$ is a smooth function.
These notations make sense because $P_\alpha$ is solution of the transport equation
$$
\partial_t P_\alpha = \mathrm{ad}_{Q_\alpha} P_\alpha.
$$
By taking the derivative of this equation with respect to $\alpha$, one gets
$$
\partial_t \partial_\alpha P_\alpha = \mathrm{ad}_{\partial_\alpha Q_\alpha} P_\alpha +  \mathrm{ad}_{Q_\alpha} \partial_\alpha P_\alpha.
$$
Then, by applying the Duhamel formula, one gets
\begin{equation}
\label{eq:cavabientotservir}
 \partial_\alpha P_\alpha(t) = e^{t\mathrm{ad}_{Q_\alpha}} \int_0^t e^{-\tau \mathrm{ad}_{Q_\alpha}} \mathrm{ad}_{\partial_\alpha Q_\alpha} P_\alpha(\tau) \mathrm{d}\tau .
\end{equation}
Moreover, $\Phi_{Q_\alpha}$ being symplectic, we have
\begin{equation*}
\begin{split}
J\nabla \big[ (\partial_\alpha Q_\alpha)\circ \Phi_{Q_\alpha}^{-\tau} \big] = J\big( \mathrm{d} \Phi_{Q_\alpha}^{-\tau} \big)^* [ \nabla  (\partial_\alpha Q_\alpha)] \circ \Phi_{Q_\alpha}^{-\tau}   = \big( \mathrm{d} \Phi_{Q_\alpha}^{-\tau} \big)^{-1}J  [\nabla  (\partial_\alpha Q_\alpha)] \circ \Phi_{Q_\alpha}^{-\tau} .
\end{split}
\end{equation*}
It follows that
\begin{equation*}
\begin{split}
e^{-\tau \mathrm{ad}_{Q_\alpha}} \mathrm{ad}_{\partial_\alpha Q_\alpha} P_\alpha(\tau) &= \big( \mathrm{ad}_{\partial_\alpha Q_\alpha} (P\circ \Phi_{Q_\alpha}^\tau)\big)\circ \Phi_{Q_\alpha}^{-\tau} \\
&= \big( J\nabla \partial_\alpha Q_\alpha \cdot \nabla (P\circ \Phi_{Q_\alpha}^\tau)\big)\circ \Phi_{Q_\alpha}^{-\tau} \\
&=  J[\nabla (\partial_\alpha Q_\alpha )]\circ \Phi_{Q_\alpha}^{-\tau} \cdot  (\mathrm{d}\Phi_{Q_\alpha}^{-\tau} )^{-*} \nabla P  \\
&= J\nabla \big[ (\partial_\alpha Q_\alpha)\circ \Phi_{Q_\alpha}^{-\tau} \big] \cdot \nabla P.
\end{split}
\end{equation*}
Plugging this relation in \eqref{eq:cavabientotservir}, we get
$$
 \partial_\alpha (P\circ \Phi_{Q_\alpha}^t) =  \Big[J\nabla \int_0^t  (\partial_\alpha Q_\alpha)\circ \Phi_{Q_\alpha}^{-\tau}  \mathrm{d}\tau \cdot \nabla P  \Big]\circ\Phi_{Q_\alpha}^t .
$$
Now, in the particular case where $P$ is a linear form, i.e. $P(x) = z\cdot x$ for some $z\in \mathbb{R}^{2n}$, this last formula gives
$$
z\cdot \partial_\alpha  \Phi_{Q_\alpha}^t =  \Big[J\nabla \int_0^t  (\partial_\alpha Q_\alpha)\circ \Phi_{Q_\alpha}^{-\tau}  \mathrm{d}\tau  \big] \circ\Phi_{Q_\alpha}^t \cdot z.
$$
This relation holding for all $z\in \mathbb{R}^n$, by duality, we have proven \eqref{eq:jolie_eq_pas_facile}.

\end{proof}

\end{appendix}

\end{document}